%
%
%
\documentclass[12pt]{amsart} 
%
%
\usepackage{amssymb} 
\usepackage{amsfonts} 
\usepackage{amsmath} 
\usepackage{amsthm} 
\usepackage{array} 
\usepackage{multirow}
\usepackage{geometry} 
\usepackage{graphicx,psfrag} 
\usepackage{mathrsfs} 
\usepackage[all]{xy} 
\usepackage{verbatim} 
\usepackage{stmaryrd}
\usepackage{hyperref}
%
\usepackage{chngpage}
%
%
\usepackage{booktabs}
\usepackage[usenames,dvipsnames]{color}
\usepackage{words} 
%
\pagestyle{headings} 
\geometry{margin=1in} 
%
%
%
\theoremstyle{plain} 
\newtheorem{thm}{Theorem}[section] 
\newtheorem{cor}[thm]{Corollary} 
\newtheorem{lem}[thm]{Lemma} 
\newtheorem{lemma}[thm]{Lemma}
\newtheorem{prop}[thm]{Proposition} 
\newtheorem*{thm*}{Theorem}

%
%
\newtheorem{defn}[thm]{Definition} 
%
%
\theoremstyle{remark} 
\newtheorem*{rem}{Remark} 
\newtheorem*{remark}{Remark}
%
%
\newcommand{\Z}{\mathbb{Z}}     
\newcommand{\Q}{\mathbb{Q}} 
\newcommand{\C}{\mathbb{C}} 
\newcommand{\A}{\mathbb{A}}     
%
\newcommand{\sK}{\mathcal{K}}
\newcommand{\sM}{\mathcal{M}}
\newcommand{\sO}{\mathcal{O}}
\newcommand{\sR}{\mathcal{R}}
%
%

\newcommand{\bk}{\textbf{k}}

\newcommand{\cms}{\overline{\sM}} 
\newcommand{\virclass}[1]{{\left[{#1}\right]^\mathrm{vir}}}            
\newcommand{\rma}[2]{\cms_{#1}(#2)}

\newcommand{\abrub}[3]{\cms^\sim_{#1}(#2,#3)}

%

\newcommand{\Chow}{{\rm CH}}
\newcommand{\br}{{\rm br}}

\providecommand{\sst}{{\rm ss}}
\providecommand{\orb}{{\rm orb}}
\providecommand{\rel}{{\rm rel}}
\newcolumntype{S}{>{\centering\arraybackslash} m{3in} }
\newcolumntype{U}{>{\arraybackslash} m{2in} }
\newcolumntype{T}{>{\centering\arraybackslash} m{1 in} }
\newcommand\T{\rule{0pt}{2.6ex}}	
\newcommand\B{\rule[-1.2ex]{0pt}{0pt}}	

\begin{document}

%
%
\title{Polynomial Families of Tautological Classes on $\sM^{rt}_{g,n}$}
\date{\today}
\author[R. Cavalieri]{Renzo Cavalieri}
\address{Renzo Cavalieri, Colorado State University, Department of Mathematics, Weber Building, Fort Collins, CO 80523, U.S.A}
\email{renzo@math.colostate.edu}
\author[S. Marcus]{Steffen Marcus}
\address{Steffen Marcus, Department of Mathematics, Box 1917, Brown University, Providence, RI, 02912, U.S.A}
\email{ssmarcus@math.brown.edu}
\author[J. Wise]{Jonathan Wise}
\address{Jonathan Wise, Department of Mathematics, Stanford University, Stanford, CA, 94305, U.S.A.}
\email{jonathan@math.stanford.edu}
\begin{abstract}
We study classes $P_{g,T}(\alpha;\beta)$ on $\sM^{rt}_{g,n}$ defined by pushing forward the virtual fundamental classes of spaces of relative stable maps  to an unparameterized $\PP^1$ with prescribed ramification over $0$ and $\infty$.  A comparison with classes $Q_{g,T}$ arising from sections of the universal Jacobian shows the classes $P_{g,T}(\alpha;\beta)$ are polynomial in the parts of the partitions indexing the special ramification data. Virtual localization on moduli spaces of relative stable maps gives sufficient relations to compute the coefficients of these polynomials in various cases.
\end{abstract}
\maketitle
\setcounter{tocdepth}{1}
\setcounter{table}{0}
\tableofcontents
%


\section{Introduction}



We consider the locus $\mathcal{L}$ inside the moduli space $\sM_{g,n}$ of smooth, $n$-pointed, genus $g$ curves over $\C$ consisting of those curves admitting a map to $\PP^1$ with prescribed ramification profile over two points.  This geometric condition can be expressed in two equivalent ways, either as the image of a morphism from an appropriate moduli space of covers of $\PP^1$ (i.e. a Hurwitz space), or by intersecting sections of the universal Jacobian $J_{g,n}$.  Each of these gives an approach to constructing a Chow class corresponding to a closure of $\mathcal{L}$ inside some partial compactification of $\sM_{g,n}$. This paper deals with the comparison and calculation of these two Chow classes in the intersection theory of the moduli space $\sM^{rt}_{g,n}$ of stable, rational tails curves.

In the first approach, ramification data is specified by partitions $\alpha$ and $\beta$ of a positive integer $d$ corresponding to profiles over $0$ and $\infty$ respectively.  Setting $T=l(\alpha)+l(\beta)$, one can define a Chow class in $\cms_{g,T}$ by pushing-forward the virtual fundamental class of the \emph{rubber} or non-rigid (see \cite[Section~2.4]{GV05}) version of the space of relative stable maps \cite{L01,L02} through the stabilization morphism $\mu: \cms^{\sim}_{g}(\PP^1;\alpha 0,\beta \infty)\to \cms_{g,T}$.  We call this class $P_{g,T}(\alpha,\beta)$ (see Definition \ref{def:pclass}).  These classes are introduced by Graber and Vakil in \cite[p.~22]{GV05}, and shown to be tautological in \cite{FP05}.  Alternatively, given a tuple $(k_1, \ldots, k_T)$ of integers adding to $0$, ramification data may be specified by the condition that the divisor $\sum k_ip_i$ is principal.  In this case, $\mathcal{L}$ consists of the inverse image of the zero section $Z\subset J_{g,T}$ through the section $\sigma:(C;p_1,\ldots, p_T)\mapsto \sum_i k_ip_i$ of the universal Jacobian.  One can naturally extend $\sigma$ to to a section of $J_{g,T}$ over the moduli space of curves of compact type and, by pulling back $Z$, define a Chow class $Q_{g,T}(k_1,\ldots,k_T) \in R^g(\sM^{ct}_{g,T})$ (see Definition \ref{def:qclass}).  An argument due to Ravi Vakil (Theorem \ref{thm:Qpoly}) shows that the $Q_{g,T}$ classes are Chow valued polynomials in the variables $(k_1, \ldots, k_T)$. 

The smallest system of partial compactifications of moduli spaces of smooth curves closed under pullback via forgetful morphisms is $\sM^{rt}_{g,T}$ (curves with rational tails), parameterizing stable curves with one irreducible component of geometric genus $g$. The goals of this paper are to provide a comparison between $P_{g,T}$ and $Q_{g,T}$ over $\sM_{g,T}^{rt}$ and to compute $P_{1,T}$ in terms of standard tautological classes using this comparison and localization techniques.  Recent work of Richard Hain provides a remarkable and completely general explicit computation of the class $Q_{g,T}$ in $H^{2g}(\sM_{g,T}^{ct})$ \cite{H11}.  When $g\geq2$, Hain's formula  takes the form:
\begin{thm}\cite[Theorem~11.1]{H11}
\begin{align*}
Q_{g,T}(k_1,\ldots,k_T) = \frac{1}{g!}\left(\sum_{j=1}^{n}\frac{k_j^2\psi_j^\dagger}{2}-\sum_{\substack{J\subset[n]\\ |J|\geq2}}\sum_{i,j\in J}k_ik_j\delta_0^J-\frac{1}{2}\sum_{J\subset[n]}\sum_{h=1}^{g-1}\left(\frac{2h-1}{2g-2}\sum_{j\in J}k_j\right)^2\delta_h^J\right)^g.
\end{align*}
\end{thm}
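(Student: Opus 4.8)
One route to Hain's formula is to present $Q_{g,T}$ as the pullback of the zero section of the universal Jacobian along $\sigma$ (Definition~\ref{def:qclass}) and to evaluate it using the Poincar\'e/Riemann--Roch formula for the class of the zero section in a family of principally polarized abelian varieties. Over $\sM_{g,T}$ the universal Jacobian $\pi\colon J_{g,T}\to\sM_{g,T}$ is an abelian scheme of relative dimension $g$ with zero section $e$ and a principal polarization; let $\theta\in\Chow^1(J_{g,T})_\Q$ be the symmetric theta class normalized by $e^\ast\theta=0$. The Poincar\'e formula identifies the class of the zero section with $\tfrac{1}{g!}\theta^g$ on each fiber, and once $\theta$ has been normalized along $e$ this holds as an identity on the total space, so $Q_{g,T}=\sigma^\ast[Z]=\tfrac{1}{g!}(\sigma^\ast\theta)^g$. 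From this perspective $\sigma$ is the admissible normal function attached to the degree-zero cycle $\sum_i k_ip_i$, and $\sigma^\ast\theta$ is, up to the chosen normalization, its associated biextension/height class. Everything thus reduces to (i) extending $\theta$ across the compact-type boundary in the correct way and (ii) computing the divisor class $\sigma^\ast\theta$ on $\sM^{ct}_{g,T}$.

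For (i): over $\sM^{ct}_{g,T}$ the dual graph of every curve is a tree, so the generalized Jacobian has trivial toric part and $J_{g,T}$ remains an abelian scheme, still carrying the principal polarization inherited from the Hodge structure on the $H^1$ of the partial normalization. Over a boundary divisor $\delta_h^J$ --- where the curve breaks into a genus-$h$ component carrying the markings in $J$ and a genus-$(g-h)$ component carrying the rest --- $J_{g,T}$ degenerates to the product of the two component Jacobians with the product polarization, and $\sigma$ degenerates to the pair of partial Abel--Jacobi images of the restricted divisors. The symmetric normalization of $\theta$ is controlled by the relative dualizing sheaf, since passing from the symmetric theta on $\mathrm{Pic}^{g-1}$ to $\mathrm{Pic}^0=J$ amounts to a translation by $\tfrac12\omega$; along $\delta_h^J$ the dualizing sheaf restricts to $\omega_{C_h}(\mathrm{node})$ on the genus-$h$ side, of degree $(2h-2)+1=2h-1$ out of a total of $2g-2$, which is exactly where the weight $\tfrac{2h-1}{2g-2}$ comes from.

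For (ii): because $\sigma$ is additive in $(k_1,\dots,k_n)$ as a section of the group scheme $J_{g,T}$, and $\theta$ restricts to a quadratic form on each fiber (up to a linear term killed by the normalization), $\sigma^\ast\theta$ is a quadratic form in $(k_1,\dots,k_n)$ with values in $\Chow^1(\sM^{ct}_{g,T})_\Q$; this already reproves Vakil's polynomiality statement (Theorem~\ref{thm:Qpoly}), now with $Q_{g,T}$ of degree $2g$. Its coefficients split into three local contributions: the self-intersection of the section $p_j$, giving the diagonal term $\tfrac{k_j^2}{2}\psi_j^\dagger$; the genus-$0$ boundary along which points in a subset $J$ collide, giving the $-k_ik_j\,\delta_0^J$ terms; and the splitting boundary $\delta_h^J$, giving $-\tfrac12\bigl(\tfrac{2h-1}{2g-2}\sum_{j\in J}k_j\bigr)^2\delta_h^J$ by the dualizing-sheaf bookkeeping of step (i). Assembling these and raising the resulting divisor to the $g$-th power yields Hain's formula; the hypothesis $g\geq 2$ is precisely what makes $2g-2$ nonzero and ensures the strata $\delta_h^J$ with $1\leq h\leq g-1$ occur.

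The main obstacle is step (i): making the extension of the theta divisor across the compact-type boundary precise with the correct symmetric normalization, equivalently computing the limiting behavior of the biextension height attached to the normal function $\sigma$. This requires either the theory of biextensions and admissible normal functions for variations of Hodge structure or an equivalent explicit computation with the Deligne pairing of line bundles on families of nodal degenerations, and it is what pins down the coefficient of $\delta_h^J$ together with its delicate weight $\tfrac{2h-1}{2g-2}$. The remaining steps are formal manipulations inside the tautological ring.
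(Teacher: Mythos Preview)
The paper does not prove this statement; it is quoted verbatim from Hain~\cite[Theorem~11.1]{H11} and used only as context for the paper's own results. There is therefore no ``paper's own proof'' to compare against.

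That said, your outline is essentially the route Hain takes: write $[Z]=\tfrac{1}{g!}\theta^g$ for a suitably normalized theta divisor on the universal Jacobian, pull back by $\sigma$, and compute the divisor class $\sigma^\ast\theta$ on $\sM^{ct}_{g,T}$ as a quadratic form in the $k_i$. You have correctly identified the structure of the argument and the sources of the three families of terms, and your heuristic for the weight $\tfrac{2h-1}{2g-2}$ via the degree of the restriction of $\omega$ is on the right track.

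However, as you yourself acknowledge, what you have written is a sketch rather than a proof: the actual content lies entirely in step~(i), and you have not carried it out. Making precise the extension of the normalized theta class across the compact-type boundary, and verifying that its pullback along $\sigma$ has exactly the stated coefficients on each $\delta_h^J$, requires either Hain's machinery of admissible normal functions and biextensions or an equivalent direct computation (e.g.\ via Deligne pairings or test curves). Until that is done, the coefficients in the formula remain assertions rather than conclusions. In particular, the $\delta_0^J$ term as stated in the formula has a somewhat unusual indexing (the sum $\sum_{i,j\in J}k_ik_j$ over unordered versus ordered pairs, and the absence of a visible $\tfrac12$), and pinning down these normalizations is exactly where a sketch can go wrong.
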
  
\noindent The notation $\delta_h^J$ refers to the boundary divisor $\overline{\text{D}}_{h,0}(J|[T]-J)$, and the psi classes $\psi_j^\dagger$ are pulled back from $\cms_{g,1}$.  A similar formula holds in the genus 1 case \cite[Theorem~12.1]{H11}.  

\subsection{Statement of Theorems}

Our first theorem establishes a comparison of the restriction of these classes to $\sM_{g,T}^{rt}$. Since we are only concerned with rational tails, we abuse notation and write $P_{g,T}$ and $Q_{g,T}$ for their restrictions. 
\begin{thm}\label{thm:compare}
Restricting to $\sM^{rt}_{g,T}$, we have: $P_{g,T}(\alpha,\beta) = Q_{g,T}(\alpha,-\beta) \in R^g(\sM_{g,T}^{rt})$.
\end{thm}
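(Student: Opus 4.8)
The plan is to exhibit both $P_{g,T}(\alpha,\beta)$ and $Q_{g,T}(\alpha,-\beta)$ as the same refined Gysin pullback of the zero section $Z\subset J_{g,T}$, realized through two different compactifications of the locus $\mathcal{L}$. By construction $Q_{g,T}(k_1,\dots,k_T)$ is the refined Gysin pullback $\sigma^![Z]$ along the regular embedding $Z\hra J_{g,T}$ (over $\sM^{rt}_{g,T}$ this is the inclusion of the zero section into an abelian scheme, hence regular of codimension $g$), taken along the Abel--Jacobi section $\sigma\colon(C;p_1,\dots,p_T)\mapsto\sO_C(\sum_i k_ip_i)$; the normal bundle of $Z$ here is the relative tangent space $R^1\pi_*\sO_C$ of the Jacobian. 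On the other side I would use that a rubber relative stable map to $\PP^1$ with ramification $\alpha$ over $0$ and $\beta$ over $\infty$ is the same data as a rational function $f$ on the (possibly expanded) domain with $\mathrm{div}(f)=\sum_i\alpha_ip_i-\sum_j\beta_jq_j$, taken modulo the $\mathbb{G}_m$ that rescales $f$; such an $f$ exists exactly when $\sO_C(\sum_i\alpha_ip_i-\sum_j\beta_jq_j)$ is trivial, and is then unique up to scaling. Setting $k_\ell=\alpha_\ell$ at the points over $0$ and $k_\ell=-\beta_\ell$ at the points over $\infty$ turns this condition into $\sigma(C;p_\bullet)\in Z$, which is precisely where the substitution $\beta\mapsto-\beta$ in the statement comes from.

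I would then check the equality of the two classes in two stages. Over the open locus $\sM_{g,T}$ of smooth curves no expansion of the target is needed, so a rubber map is simply a rational function $f$ with the prescribed divisor modulo scaling, i.e. a trivialization of $L=\sO_C(\sum_i\alpha_ip_i-\sum_j\beta_jq_j)$ modulo $\mathbb{G}_m$; this identifies $\cms^\sim_g(\PP^1;\alpha 0,\beta\infty)$ with $\sigma^{-1}(Z)$ over $\sM_{g,T}$. Moreover the obstruction theories match: the logarithmic tangent bundle $T_{\PP^1}(-\log\{0,\infty\})$ is trivial, so it pulls back to $\sO_C$, the deformation--obstruction complex of the rubber map is $R\pi_*\sO_C$ (its $H^0$ absorbed by the $\mathbb{G}_m$-quotient), and its $H^1=R^1\pi_*\sO_C$ is exactly the normal bundle of $Z$ in $J_{g,T}$ governing $\sigma^![Z]$. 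Hence $\mu_*\virclass{\cms^\sim_g(\PP^1;\alpha 0,\beta\infty)}$ and $\sigma^![Z]$ agree on $\sM_{g,T}$. To promote the equality to $\sM^{rt}_{g,T}$ I would analyze the boundary: on a rational-tails curve the entire genus sits on a single smooth component, the Jacobian stays abelian, and the only new feature of the rubber space is the appearance of target expansions supported on trees of $\PP^1$'s. Since line bundles of multidegree zero on such a tree are trivial and rigid, these bubbling strata should contribute nothing to the obstruction theory, and a stratum-by-stratum comparison of virtual normal bundles should close the argument. It is precisely here that the restriction to rational tails matters — over $\cms^{ct}_{g,T}$ the analogous matching is the content of Hain's theorem.

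Finally, I would note that $\mu$ is generically an isomorphism onto $\mathcal{L}\subset\sM_{g,T}$, since the rational function realizing a rubber map is determined up to a scalar by the domain and the ramification points; hence the pushforward introduces no multiplicity and $\mu_*\virclass{\cms^\sim_g(\PP^1;\alpha 0,\beta\infty)}=\sigma^![Z]=Q_{g,T}(\alpha,-\beta)$ after the substitution above. The hard part will be the second stage of the comparison: making rigorous the claim that expansions of the target over rational-tails strata do not perturb the virtual class, and matching the two perfect obstruction theories compatibly with the regular embedding $Z\hra J_{g,T}$, while keeping careful track of the stack automorphisms coming from the $\mathbb{G}_m$-scaling and from the ramification profiles. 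Equivalently, the crux is a clean statement that over $\sM^{rt}_{g,T}$ the pushforward of the rubber virtual class computes the refined preimage of the zero section of the abelian universal Jacobian.
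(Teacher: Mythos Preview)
Your overall strategy is right in spirit---both classes should be identified with a refined Gysin pullback along $Z\hookrightarrow J$---and you have correctly isolated the key pieces: the logarithmic tangent bundle of $(\PP^1,\{0,\infty\})$ is trivial, its $R^1\pi_\ast$ is the dual Hodge bundle, and that is the normal bundle of the zero section. Over smooth curves your argument is essentially complete.

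The gap is in how you propose to extend across the boundary. A ``stratum-by-stratum comparison of virtual normal bundles'' is not a method here: once the target expands, Li's obstruction theory for relative stable maps is governed by the \emph{predeformability} condition, which is not open, and the deformation complex is no longer simply $R\pi_\ast f^\ast T_{\PP^1}(-\log\{0,\infty\})$. Your assertion that ``bubbling strata should contribute nothing'' is precisely what has to be proved, and there is no obvious excision principle that does it stratumwise. This is where the paper's argument diverges substantially from yours.

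The paper avoids the stratum-by-stratum analysis entirely by inserting an intermediate moduli space. It introduces $\oM_{\rel}(\sP)$, the space of relative maps to $\sP=[\PP^1/\Gm]$ with a distinguished parameterized component of the expansion, and shows (Theorem~\ref{thm:cart}) that
\[
\xymatrix{
\oM_{\rel}(\sP/\BGm) \ar[r]\ar[d] & Z \ar[d] \\
\oM_{\rel}(\sP) \ar[r] & J
}
\]
is cartesian. Costello's pushforward theorem then reduces the comparison to two pieces: (i)~$\oM_{\rel}(\sP)\to\fM^\ast$ is birational of pure degree one (Proposition~\ref{DMtype}), and (ii)~the \emph{relative} obstruction theory of $\oM_{\rel}(\sP/\BGm)$ over $\oM_{\rel}(\sP)$ coincides with the pullback of $N_{Z/J}$. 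The point of (ii) is that while the obstruction theories of $\oM_{\rel}(\sP/\BGm)$ and $\oM_{\rel}(\sP)$ over $\fM$ are individually complicated (both involve predeformability), their \emph{difference} is governed only by the extra $\BGm$, and one computes directly (Proposition~\ref{prop:ext}) that this difference is $\Hom(\bE,-)$, i.e.\ exactly the normal bundle of the zero section. To make this rigorous the paper introduces a site $CS$ mixing the \'etale topologies of the source curve and its base, on which the deformation problems become locally trivial; this replaces your stratification argument with a uniform torsor calculation.

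So what you are missing is not the target statement but the mechanism: the cartesian diagram with $\oM_{\rel}(\sP)$ and the appeal to Costello's theorem are what turn a hard absolute obstruction comparison over the boundary into an easy relative one.
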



Having established the equality of the $P$ and $Q$ classes, we next use, independently of Theorem 1.1, the Gromov-Witten theoretic tool of virtual localization to understand the coefficients of these polynomials.  First we recover a variant of classical result of Looijenga's \cite[Lemma~2.10]{l:trmg}, presented as it appears in \cite{GJV06}.

\begin{thm}[\cite{GJV06} Theorem 3.5]
\label{theorem:T=2}
\[
P_{g,2}(d;d)= d^{2g} P_{g,2}(1;1)
\]
where
$$
\sum_{g=1}^\infty\lambda_g\lambda_{g-1}P_{g,2}(d;d)y^{2g}=\log\left(\frac{dy/2}{\sin(dy/2)}\right)[pt.]
$$
\end{thm}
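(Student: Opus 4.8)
The plan is to deduce both assertions from a single virtual localization computation on the rubber space, in the spirit of the $\lambda_g$-type Hodge integral calculations in \cite{GJV06}. Write $X=\cms^{\sim}_g(\PP^1;d0,d\infty)$, so that $P_{g,2}(d;d)=\mu_*\virclass{X}$. By the standard vanishing of $\lambda_g\lambda_{g-1}$ off $\sM^{rt}_{g,2}$ --- a non-separating node forces $\lambda_g=0$, as the Hodge bundle drops rank, and a separating node between two positive-genus subcurves forces $\lambda_g\lambda_{g-1}=0$ since $\lambda_h^2=\lambda_{g-h}^2=0$; see e.g.\ \cite{FP05} --- the coefficient of $[pt.]$ in $\lambda_g\lambda_{g-1}P_{g,2}(d;d)$, that is, the number
\[
\int_{\cms_{g,2}}\lambda_g\lambda_{g-1}\,P_{g,2}(d;d)\;=\;\int_{X}\mu^*(\lambda_g\lambda_{g-1})\cap\virclass{X},
\]
is well defined, $\lambda_g\lambda_{g-1}$ having codimension complementary to the virtual dimension $2g-1$ of $X$. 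It suffices to evaluate the right-hand side; the class-level homogeneity $P_{g,2}(d;d)=d^{2g}P_{g,2}(1;1)$ will then emerge from the $d$-homogeneity of the individual fixed-point contributions below.

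I would next apply the virtual localization formula for the scaling $\C^*$ on $\PP^1$ fixing $0$ and $\infty$, in its rubber incarnation (Graber--Vakil, Maulik--Pandharipande). The torus-fixed loci are indexed by localization graphs: a (possibly contracted) vertex over $0$ carrying the full-ramification marking, a vertex over $\infty$, and a spine mapping with degree $d$ and totally ramified throughout. The decisive step is that $\mu^*(\lambda_g\lambda_{g-1})=\lambda_g\lambda_{g-1}$ kills every fixed locus whose source curve is not of rational-tails type: loops and non-separating nodes are excluded by $\lambda_g$, and a split of the genus between the two sides is excluded by $\lambda_g\lambda_{g-1}$, so all of the genus concentrates on a single contracted component and the surviving contributions reduce to integrals over $\cms_{g,1}$.

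Assembling the surviving contributions, the virtual normal bundle has three sources: smoothing the node attaching the genus-$g$ component to the spine, contributing $\tfrac{t}{d}-\psi$ with the factor $\tfrac1d$ coming from the $d$-fold cover; the automorphism and deformation factors of the degree-$d$ spine, contributing a monomial in $t$ and $d$; and the obstruction bundle $\mathbb{E}^\vee$ on the contracted component twisted by the weight, contributing $\Lambda_g^\vee(t)=\sum_i(-1)^i\lambda_i t^{g-i}$. Combining these and extracting the weight-zero part, all of the $d$-dependence factors out as $d^{2g}$ --- which is precisely the homogeneity $P_{g,2}(d;d)=d^{2g}P_{g,2}(1;1)$ --- and, by dimension together with standard tautological relations, what remains is a fixed rational multiple of a one-pointed Hodge integral of the form $\int_{\cms_{g,1}}\psi_1^{g-1}\lambda_g\lambda_{g-1}$. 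Substituting the Faber--Pandharipande evaluation of this integral in terms of $B_{2g}$ and comparing with the expansion $\log\!\left(\tfrac{u/2}{\sin(u/2)}\right)=\sum_{g\ge1}\tfrac{|B_{2g}|}{2g\,(2g)!}\,u^{2g}$ (coming from the product formula for $\sin$ and $\zeta(2g)=\tfrac{(-1)^{g+1}B_{2g}(2\pi)^{2g}}{2(2g)!}$), with $u=dy$, completes the identification.

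The hardest part will be the rubber bookkeeping: pinning down the correct normalization in the rubber localization formula --- the automorphism factors of the degree-$d$ spine, the rubber cotangent-line conventions, and the treatment of the spine/target geometry --- and verifying that the $\lambda_g\lambda_{g-1}$-vanishing together with relations like $\lambda_j^2=0$ collapses everything to the single Hodge integral over $\cms_{g,1}$, all while keeping the powers of $2$ and $d$ straight so that the constant comes out to $\tfrac{|B_{2g}|}{2g\,(2g)!}$. As a consistency check on the homogeneity, Hain's formula combined with Theorem~\ref{thm:compare} shows that $Q_{g,2}(d,-d)$ equals $d^{2g}$ times a fixed tautological class on $\sM^{rt}_{g,2}$, since Hain's expression is homogeneous of degree $2g$ in $(k_1,k_2)$; but the argument sketched above is independent of Theorem~\ref{thm:compare} and of Hain's formula, as desired.
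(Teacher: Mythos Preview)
Your plan has a structural gap at its first step: the rubber space $X=\abrub{g}{d}{d}$ carries no $\C^\ast$-action, precisely because the target has been unparameterized. The paper flags this explicitly in the introduction as the first of two obstacles to a direct attack. There is no ``rubber incarnation'' of virtual localization in the sense you invoke: in Graber--Vakil the torus acts on moduli of maps to a \emph{rigid} target, and rubber spaces enter only as factors of fixed loci. Your fixed-locus description --- a single degree-$d$ spine with the genus contracted at one end --- would match localization on the rigid two-relative-point space $\cms_g(\PP^1;d\cdot 0,d\cdot\infty)$, but there the contracted genus-$g$ vertex over $0$ (or $\infty$) lies in the expanded part of the target and contributes exactly the rubber class $\virclass{\abrub{g}{d}{d}}$ you set out to compute. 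The relation is circular, not a reduction to $\cms_{g,1}$.

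The paper's route is different in kind. It localizes on the \emph{one}-relative-point space $\rma{g}{d}$ --- so that the non-relative fixed point $0$ can host honest contracted $\cms_{g,1}$-components, which is the second obstacle the introduction identifies --- capped with the auxiliary equivariant class $\br^\ast([H]^{d-1})\lambda_g\lambda_{g-1}$. The desired rubber class then appears in the single fixed locus $\Gamma_R(d)$ over $\infty$, while the loci $\Gamma_L(\nu;j)$ over $0$ contribute products $\cms_{g,1}\times\abrub{0}{\nu}{d}$ indexed by \emph{all} partitions $\nu\vdash d$ of length at most $2g+1$. Vanishing of the $1/t$-coefficient of the pushforward expresses $\lambda_g\lambda_{g-1}\virclass{\abrub{g}{d}{d}}$ as a partition sum of Hodge integrals $\int_{\cms_{g,1}}\tfrac{1-\lambda_1+\cdots\pm\lambda_g}{1-e\psi_1}\lambda_g\lambda_{g-1}$ for varying $e$, not a single $\psi_1^{g-1}\lambda_g\lambda_{g-1}$. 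The identification of the resulting generating function with $\log\bigl(\tfrac{dy/2}{\sin(dy/2)}\bigr)$ --- and with it the $d^{2g}$-homogeneity --- is then a citation of \cite[Proposition~1]{FP00}, where precisely this partition sum is evaluated.
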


\noindent Another localization computation determines explicitly the polynomial for $T=3$ and $g=1$.

\begin{thm}
\label{theorem:T=3}
The genus 1 total length 3 polynomial $P_{1,3}(d;\alpha_2,\alpha_3)=A_2\alpha_2^2+A_3\alpha_3^2 +B\alpha_2\alpha_3$ has coefficients
\begin{align*}
A_2&=\psi_1-\overline{\text{D}}_{1,0}(2|1,3)\\
A_3&=\psi_1-\overline{\text{D}}_{1,0}(3|1,2)\\
B&=\psi_1-\overline{\text{D}}_{1,0}(1|2,3)
\end{align*}
in  $\sR^1(\sM_{1,3}^{rt})$.
\end{thm}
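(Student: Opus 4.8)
The plan is to reduce the theorem to the identification of three tautological classes in $\sR^1(\sM^{rt}_{1,3})$ and then to compute them using virtual localization on the relevant space of relative stable maps. By Theorem~\ref{thm:compare} we have $P_{1,3}(d;\alpha_2,\alpha_3)=Q_{1,3}(d,-\alpha_2,-\alpha_3)$, which by Theorem~\ref{thm:Qpoly} is polynomial in the parts; and a dimension count on $\cms^\sim_1(\PP^1;(d)0,(\alpha_2,\alpha_3)\infty)$ (virtual dimension $2$, against $\dim\cms_{1,3}=3$, so that $\mu_*\virclass{\cms^\sim_1(\PP^1;(d)0,(\alpha_2,\alpha_3)\infty)}$ has codimension $1$) together with the scaling behaviour of the localization contributions forces this polynomial to be homogeneous of degree $2$ in $(\alpha_2,\alpha_3)$ (alternatively, polynomiality and homogeneity follow directly from the shape of the fixed-point contributions). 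Since $\alpha_2+\alpha_3=d$, the quadratic forms in $(\alpha_2,\alpha_3)$ span a three-dimensional space, so indeed $P_{1,3}=A_2\alpha_2^2+A_3\alpha_3^2+B\alpha_2\alpha_3$ with $A_2,A_3,B\in\sR^1(\sM^{rt}_{1,3})$. The involution of $\cms^\sim_1(\PP^1;(d)0,(\alpha_2,\alpha_3)\infty)$ swapping the two points over $\infty$ descends through $\mu$ to the involution of $\cms_{1,3}$ exchanging the markings $2$ and $3$; it shows $A_3$ is the image of $A_2$ under this swap and that $B$ is invariant. As the swap fixes $\psi_1$ and $\overline{\text{D}}_{1,0}(1|2,3)$ and interchanges $\overline{\text{D}}_{1,0}(2|1,3)$ and $\overline{\text{D}}_{1,0}(3|1,2)$, it suffices to determine $A_2$ and $B$.

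To access the coefficient classes I would compute, on the $P$-side, two kinds of data by localization: (i) the restrictions of $P_{1,3}$ to the boundary divisors $\overline{\text{D}}_{1,0}(i|[3]\setminus i)$ of $\sM^{rt}_{1,3}$, and (ii) its intersection numbers against a spanning family of tautological classes of complementary degree (equivalently, against test curves in $\sM^{rt}_{1,3}$). For (i), on a boundary divisor the target degenerates and the splitting formula for relative stable maps expresses $P_{1,3}$ as a sum over intermediate ramification profiles of products of a relative invariant carried by the genus-$1$ factor — the leading term being $P_{1,2}$, evaluated explicitly by Theorem~\ref{theorem:T=2} together with $\int_{\cms_{1,1}}\lambda_1=\frac{1}{24}$ — with a genus-$0$ relative invariant on the rational factor, which by the genus-$0$ case of Theorem~\ref{thm:compare} is a fundamental class, up to gluing factors that are explicit monomials in the parts. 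For (ii), each number is a rubber integral $\int_{\virclass{\cms^\sim_1(\PP^1;(d)0,(\alpha_2,\alpha_3)\infty)}}\mu^*\gamma$, computed by the rubber localization formula of \cite{GV05}: the $\C^*$-fixed loci are the expanded configurations consisting of a rational component totally ramified of order $d$ over $0$, rational components realizing $(\alpha_2,\alpha_3)$ over $\infty$, a single contracted genus-$1$ component carrying the arithmetic genus, and contracted chains of rational curves over the interior of the rubber; each contributes a product of a genus-$1$ Hodge integral over $\cms_{1,1}$ or $\cms_{1,2}$, automorphism factors, and monomials in $d,\alpha_2,\alpha_3$ coming from the Euler classes of the normal directions, with the equivariant parameter cancelling in the codimension-$1$ output.

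Assembling the boundary restrictions and the intersection numbers, comparing coefficients of $\alpha_2^2,\alpha_3^2,\alpha_2\alpha_3$, and inverting the resulting linear system over $\sR^1(\sM^{rt}_{1,3})$ should produce the stated values $A_2=\psi_1-\overline{\text{D}}_{1,0}(2|1,3)$, $A_3=\psi_1-\overline{\text{D}}_{1,0}(3|1,2)$, $B=\psi_1-\overline{\text{D}}_{1,0}(1|2,3)$. The main obstacle is twofold: carrying out the rubber localization correctly, i.e. handling the non-rigid target via the rubber recursion of \cite{GV05} and, at each fixed locus, pinning down the image stratum in $\cms_{1,3}$ so as to discard precisely the contributions landing in the complement of $\sM^{rt}_{1,3}$; and having enough control of $\sR^1(\sM^{rt}_{1,3})$ — its dimension and the relations among $\psi_1,\psi_2,\psi_3$ and the four boundary divisors — to be certain the relations obtained determine the three classes uniquely. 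As an independent check one may substitute the answer into Hain's genus-$1$ formula \cite[Theorem~12.1]{H11} through Theorem~\ref{thm:compare}.
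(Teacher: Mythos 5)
Your overall scaffolding — reduce via Theorem~\ref{thm:compare} and Theorem~\ref{thm:Qpoly} to a homogeneous degree-$2$ polynomial, use the $S_2$-symmetry exchanging the points over $\infty$ to relate $A_2$ and $A_3$ and fix $B$ up to one unknown — matches the paper. But your proposed method of actually computing the three coefficient classes has a genuine gap, and also diverges from the paper in a way worth pointing out.

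The gap is in your step (ii). You propose to ``compute rubber integrals $\int_{\virclass{\cms^\sim_1(\PP^1;(d)0,(\alpha_2,\alpha_3)\infty)}}\mu^*\gamma$ by the rubber localization formula of \cite{GV05}.'' But the rubber target has no $\C^*$-action, so the rubber moduli space $\cms^\sim_1(\alpha,\beta)$ does not carry a torus action and does not admit a fixed-locus decomposition of its own; the paper flags this explicitly as one of the two obstacles to mimicking the ELSV argument. In \cite{GV05} the torus acts on the space of \emph{rigid} relative maps, and rubber spaces appear only as \emph{factors of the fixed loci}. The paper accordingly localizes the equivariant lift of $\lambda_1\bigl(\prod ev_i^*(\mathrm{pt})\bigr)\br^*([H]^{d-2})$ on $\rma{1,\{2,3\}}{d}$ (relative to $\infty$ only, with full ramification $(d)$) and reads off the $t^{-1}$ coefficient of the localization identity; the push-forwards of the rubber classes in which one is interested appear inside the fixed-locus contributions, alongside auxiliary classes (the $S$-loci $\text{S}((\alpha_2)_{2,3}+\alpha_3)$) that must themselves be resolved by pushing the relation forward under forgetful maps and sections (``push-push''). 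You have not foreseen these auxiliary unknowns, and without the rigid-space localization there is no clean way for them to show up and be eliminated.

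Separately, your plan for $A_2$, $A_3$ — restrict $P_{1,3}$ to boundary divisors and use a degeneration/splitting argument — is more complicated than necessary and the paper does something cleaner: by Theorem~\ref{thm:Qpoly}(ii), setting $\alpha_3=0$ is the same as pulling back along the forgetful morphism $\pi_3:\sM^{rt}_{1,3}\to\sM^{rt}_{1,2}$, so $A_2\alpha_2^2 = P_{1,3}(d;\alpha_2,0) = \pi_3^*P_{1,2}(\alpha_2;\alpha_2)$. Combining with $\lambda_1 P_{1,2}(\alpha_2;\alpha_2)=\tfrac{1}{24}[\mathrm{pt}]\,\alpha_2^2$ from Theorem~\ref{theorem:T=2}, one gets $A_2=\pi_3^*\overline{\text{D}}_{1,0}(\emptyset|1,2)=\overline{\text{D}}_{1,0}(\emptyset|1,2,3)+\overline{\text{D}}_{1,0}(3|1,2)$, which equals $\psi_1-\overline{\text{D}}_{1,0}(2|1,3)$ by the pulled-back relation $\lambda_1=\psi_1-\sum\overline{\text{D}}_{1,0}$ on $\cms_{1,3}$, and $A_3$ then by symmetry. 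Only the coefficient $B$ requires a localization relation, and the paper obtains it from a single degree-$2$ computation (plus the push-push resolution of the $S$-loci) rather than from a linear system built from boundary restrictions and test integrals. Your suggested cross-check against Hain's genus-$1$ formula is a good idea and consistent with the paper's discussion, but it does not replace the missing rigid-space localization.
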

By Corollary \ref{cor:pullback}, this computes the polynomial in genus 1 for arbitrary total length.   

\begin{cor}\label{cor:arbitraryT} Let $[T]:=\{1,\ldots,T\}$. 
For any $T\geq 3$: 
$$P_{1,T}(d;\alpha_2, \ldots, \alpha_T)=\sum_{i=2}^T A_i\alpha_i^2+ \sum_{i>j} B_{i,j}\alpha_i\alpha_j$$
where:
\begin{align*}
A_i&=\psi_1-\sum_{\substack{J\subset[T]:\\ |J|\geq2\\ 1\in J \text{ and } i\notin J}}\overline{\text{D}}_{1,0}([T]-J|J)\\
B_{i,j}&=\psi_1-\sum_{\substack{J\subset[T]:\\ |J|\geq2\\ 1\in J \text{ and } i,j\notin J}}\overline{\text{D}}_{1,0}([T]-J|J)-\sum_{\substack{J\subset[T]:\\ |J|\geq2\\ 1\notin J \text{ and } i,j\in J}}\overline{\text{D}}_{1,0}([T]-J|J)\\
\end{align*}
\end{cor}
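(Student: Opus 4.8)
The plan is to reduce the arbitrary-length statement to the case $T=3$ treated in Theorem \ref{theorem:T=3} by exploiting the compatibility of the classes $P_{1,T}$ with forgetful morphisms, as promised by Corollary \ref{cor:pullback}. Concretely, I would first record that $P_{1,T}(d;\alpha_2,\dots,\alpha_T)$ is a quadratic form in the ramification parameters $(\alpha_2,\dots,\alpha_T)$ with no linear or constant term: this is the polynomiality statement from Theorem \ref{thm:Qpoly} together with Theorem \ref{thm:compare} identifying $P$ with $Q$, and the vanishing of low-degree terms follows because $Q_{1,T}$ lives in $R^1$ and is homogeneous of degree $1$ under scaling (equivalently, from Hain's genus-$1$ formula \cite[Theorem~12.1]{H11}, which makes the homogeneity manifest). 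So it suffices to determine each coefficient $A_i$ and $B_{i,j}$ separately.

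Next, for a fixed index $i$, I would pull back along the forgetful morphism $\pi\colon \sM^{rt}_{1,T}\to\sM^{rt}_{1,3}$ that retains the markings $1$, $i$, and one auxiliary marking, and remembers the partition data accordingly; by Corollary \ref{cor:pullback} this sends $P_{1,3}$ to (a specialization of) $P_{1,T}$, so the coefficient $A_i$ of $\alpha_i^2$ is $\pi^\ast$ applied to the coefficient $A_2 = \psi_1 - \overline{\text{D}}_{1,0}(2|1,3)$ of Theorem \ref{theorem:T=3}. The computation then comes down to two standard pullback formulas: $\pi^\ast\psi_1 = \psi_1 - \sum \overline{\text{D}}_{1,0}(\cdots)$, where the sum runs over the boundary divisors whose genus-$1$ component carries the auxiliary markings but not $1$ (the comparison of $\psi$ on $\sM_{g,n+1}$ with the pullback of $\psi$ from $\sM_{g,n}$); and the pullback of a boundary divisor $\overline{\text{D}}_{1,0}$, which is the sum of the boundary divisors of $\sM^{rt}_{1,T}$ mapping onto it. Carefully bookkeeping which subsets $J\subset[T]$ with $1\in J$, $i\notin J$ arise — every rational tail containing $i$ or not, subject to keeping $1$ on the genus-$1$ component downstairs — yields exactly the indexing set $\{J : |J|\ge 2,\ 1\in J,\ i\notin J\}$ in the stated formula for $A_i$. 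The mixed coefficient $B_{i,j}$ is handled by the same mechanism, now forgetting all markings except $1$, $i$, $j$; here both types of boundary divisor appear ($1$ on the genus-$1$ component with $i,j$ on a tail, and $1$ on a tail together with $i,j$), producing the two sums in the formula for $B_{i,j}$.

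The main obstacle I anticipate is not conceptual but organizational: keeping the combinatorics of the boundary pullbacks completely consistent, in particular making sure that the indexing convention $\overline{\text{D}}_{h,0}(A|B)$ (which side carries the genus, which side is the rational tail) is respected through each pullback, and verifying that no boundary divisor is double-counted or omitted when several auxiliary markings are forgotten in sequence rather than all at once. A secondary technical point is to confirm that Corollary \ref{cor:pullback} indeed gives the \emph{equality} $\pi^\ast P_{1,3} = P_{1,T}|_{\text{specialized}}$ on the nose in Chow, so that comparing coefficients of the polynomials is legitimate; granting that, the rest is a finite check that the recursion on $T$ closes up to the claimed closed form, which can be done by induction on $T$ with the one-marking-at-a-time forgetful maps.
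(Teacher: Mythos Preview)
Your proposal is correct and follows essentially the same route as the paper's own argument in Section~7.4: reduce to the $T=3$ case of Theorem~\ref{theorem:T=3} via Theorem~\ref{thm:Qpoly}(ii) (which is what underlies Corollary~\ref{cor:pullback}), pull back the coefficients $A_2$ and $B$ iteratively through the forgetful maps, and then invoke the $S_T$-equivariance of Theorem~\ref{thm:Qpoly}(i) to obtain the formulas for general $A_i$ and $B_{i,j}$. Your observation that the homogeneity statement Theorem~\ref{thm:Qpoly}(iv) forces $P_{1,T}$ to be a pure quadratic form is exactly the right way to justify the shape of the polynomial before computing coefficients, and the organizational concern you flag---tracking which boundary strata appear in $\pi^\ast\psi_1$ versus in $\pi^\ast\overline{\text{D}}_{1,0}$ and ensuring the indexing conventions are consistent---is indeed the only nontrivial content, which the paper leaves to the reader.
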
  

\subsection{Comparison of Virtual Classes}  We prove Theorem~\ref{thm:compare} using a theorem of Costello \cite[Theorem~5.0.1]{C06}, which reduces the problem to two parts:  a comparison of the obstruction theories defining the virtual classes, and a verification of the statement of the theorem in a ``universal situation''.  This ``universal situation'' is described in Section~\ref{sec:compare}, where it is denoted $\oM_{\rel}(\sP)$.  It is the universal example of relative stable maps to expansions of a space with two disjoint marked sections.  The universal version of Theorem~\ref{thm:compare} amounts to the statement that $\oM_{\rel}(\sP)$ is birational to the moduli space of rational tails curves; this is proved in Proposition~\ref{DMtype}.

The obstruction theory comparison is more technical.  It requires an explicit understanding of J.\ Li's definition of the virtual fundamental class for the moduli space of relative stable maps \cite{L02}.  While the definition of the virtual fundamental class for the moduli space of relative stable maps to ``rubber'' $\bP^1$ is analogous to one defined by Li (as was pointed out in \cite{GV03}), the details of the construction of the virtual class have not appeared before.

In order to provide these details, we reinterpret J.\ Li's construction in the present context in terms of a site on which deforming a relative stable map becomes a locally trivial problem.  This site is introduced in Section~\ref{sec:site}, where we discuss some of its basic properties.  In a ``locally trivial'' situation, there is always a natural obstruction theory arising from torsors under a relative tangent bundle.  (This principle will be explained in greater detail in the forthcoming~\cite{obs}, where it will also be applied to deformation problems including the one originally studied by Li.)  This idea yields a geometric description of the relative obstruction theory for $\sM_g^\sim(\alpha,\beta)^{rt}$ relative to the ``universal situation'' $\oM_{\rel}(\sP)$.  This obstruction theory is visibly the same as the pullback of the normal bundle of the zero section of the relative Jacobian, which is by definition the obstruction theory that defines the $Q_{g,T}$.
   
\subsection{Polynomial classes}  Besides being interesting on its own, Theorem \ref{thm:compare} is essential in establishing the polynomiality of the classes $P_{g,T}$.
This is not the first time that ``polynomiality behavior'' of tautological classes related to stable maps to $\PP^1$ has appeared. Perhaps the most famous example is given by the ELSV formula (\cite{ELSV1},\cite[Theorem 1.1]{ELSV2})
expressing simple Hurwitz numbers $H^g_\alpha$ in terms of Hodge integrals. This numerical statement is obtained by integrating the tautological class
$$
[\sC_\alpha]=\mu_\ast(\virclass{\rma{g}{\alpha}}\cap \text{br}^\ast[pt]).
$$  

\noindent This class parameterizes maps to $\PP^1$ with profile $\alpha$ over zero and a fixed generic branch divisor, thus   $ [\sC_\alpha] = H^g_\alpha [pt]$. On the other hand after choosing an appropriate equivariant lift of $[\sC_\alpha]$ one can evaluate it using the Atiyah-Bott localization isomorphism:
$$
[\sC_\alpha]= r^g_\alpha!\prod_{i=1}^{l(\alpha)}\left(\frac{\alpha_i^{\alpha_i}}{\alpha!}\right)   \frac{1-\lambda_1+\cdots\pm\lambda_g}{(1-\alpha_1\psi_1)\cdots(1-\alpha_{l(\alpha)}\psi_{l(\alpha)})}
 \in \sR^{3g-3+l(\alpha)}(\cms_{g,l(\alpha)}),
$$
where this expression is understood by expanding the denominator terms into geometric series and considering products of terms of degree $3g-3+l(\alpha)$. It is immediate to conclude that, other than a combinatorial prefactor, $[\sC_\alpha]$ is polynomial in the $\alpha_i$'s with coefficients given by monomials in $\psi$ classes and one Hodge class. 


The above proof of the ELSV formula (from \cite[Section 5]{GV03}) motivates our approach to Theorems \ref{theorem:T=2} and \ref{theorem:T=3}. However, two significant obstacles arise: first, the classes we study are ``rubber" classes and live in moduli spaces that do not admit a torus action; second, having two relative points for our maps to $\PP^1$ already ``crowds'' both $0$ and $\infty$, leaving no fixed point for curves to contract to, and therefore no fixed locus containing moduli spaces of curves as one of the factors.

We localize on moduli spaces of relative maps to $\PP^1$ with only one relative point. Fixed loci consist of products of moduli spaces of curves and moduli spaces of rubber stable maps with two relative points. Hence the classes we are interested in appear in the fixed loci. By choosing carefully the auxiliary integrals, we produce manageable relations between rubber classes and standard classes. Remebering the polynomiality of rubber classes, each such relation translates into a linear equation in the coefficients of $P_{g,T}$, giving a linear system of equations with solution expressed in terms of standard classes.
  
\subsection{On the problem of extending the class $Q_{g,T}$} Theorems \ref{theorem:T=2} and \ref{theorem:T=3} are special cases of the more general result of Richard Hain's cited above. Hain's techniques  are extremely well tuned to the study of the classes $Q_{g,T}$ and give general formulas that are beyond our reach. However, we now hope that the two points of view may interact. A natural question posed by Hain is how to further extend the classes $Q_{g,T}$.  Relative stable maps, for example, provide a way of extending these classes to the compact moduli space. There are, in fact, more ways to compactify $Q_{g,T}$, such as using moduli spaces of admissible covers. It would be interesting to understand the relationship between the various possible compactifications, and to see if one of them is particularly natural. To make this vague statement only infinitesimally more precise, consider the following diagram:

$$
\xymatrix{   & \overline{J}_{g,T} \ar[d] \ar[dl] & \\
\widehat{\sM}_{g,T} \ar@/^/[ur]^{\hat{\sigma}} \ar[r]^p & \overline{\sM}_{g,T} \ar@/_/@{.>}[u]_{\sigma} & \sM_{g,T}^{ct} \ar[l] \ar@/_/[ul]_{\sigma}.
}
$$
Here $\overline{J}_{g,T}$ is meant to be ``some" compactified universal Jacobian, and $\widehat{\sM}_{g,T}$ ``some" space allowing a resolution of the indeterminacies of the section $\sigma$ used to define $Q_{g,T}(k_1,\ldots, k_T)= \sigma^\ast(Z)$. A natural compactification of $Q_{g,T}(k_1,\ldots, k_T)$ would be a class of the form:
$$
\overline{Q}_{g,T}(k_1,\ldots, k_T)=p_\ast \hat{\sigma}^\ast (Z)
$$
 i.e. a class obtained by resolving the indeterminacies of the section of the Jacobian (requiring us to work on a ``larger" space than $\cms_{g,T}$), pulling back the zero section, and pushing forward again to $\cms_{g,T}$.
\subsection{Acknowledgments}
The authors would like to thank Ravi Vakil for suggesting this project to us, and for providing the argument outlined in Section 2.  We are grateful to Dan Abramovich for his numerous insights, including his suggestion of the cartesian diagram to use for the comparison and the argument of Proposition \ref{DMtype}.  We would also like to thank Richard Hain for speaking to us about his related work.  This project has benefited from many useful conversations with Brian Conrad, Dan Erman, Barbara Fantechi, William Gillam, Jack Hall, Richard Kenyon, Kelli Talaska, and Kevin Tucker. The second author is supported by an NSERC PGS-D grant and the NSF award~0603284.  The third author is partly supported by NSF MS-PRF~0802951.  Parts of this work were accomplished at MSRI during the Algebraic Geometry program in the Spring of 2009.

%
\section{Polynomiality of $Q_{g,T}$}
\label{sec:polynomiality}

In this section we introduce a family of tautological classes $Q_{g,T}(k_1,\ldots,k_T)\in \Chow^g(\sM^{rt}_{g,T})$ constructed from the geometry of the universal Jacobian. We present an argument, due to Ravi Vakil, showing that the function $Q_{g,T}(k_1,\ldots,k_T)\in \Chow^g(\sM^{rt}_{g,T})$ is a homogeneous Chow valued polynomial in the variables $k_1,\ldots,k_T$. 

Let $\Z_0^T$ denote $T$-tuples of integers summing to zero.  We write $Z$ for the zero section of the universal Jacobian $\rho: J_{g,T} \rightarrow \sM_{g,T}^{rt}$ over the rational tails locus.  Given a rational tails curve $C$, let $\pi:C\to \oC$ be the contraction to the unique smooth genus $g$ component. 

For $(k_1, \dots, k_T) \in \Z^T_0$, define the section 
\[
\sigma_{(k_1,\ldots,k_T)}: \sM^{rt}_{g,T} \rightarrow J_{g,T}
\] by
\[
(C, p_1, \dots, p_T) \mapsto \left( C, \pi^\ast\sO_{\oC} \left(  k_1 \pi(p_1) + \cdots k_T 
  \pi(p_T)  \right) \right).
\]
For each $g \geq 0$, $T \geq 2$ the intersection of $\sigma_{(k_1,\ldots,k_T)}$ with the zero section determines a Chow-valued function
\begin{align*}
Q_{g,T}: \Z^T_0 &\rightarrow \Chow^g(\sM_{g,T}^{rt})
\end{align*}
defined as follows:
\begin{defn}\label{def:qclass} The class $Q_{g,T}(k_1,\ldots,k_T)$ is the $g$-codimensional Chow class
\begin{align*}
Q_{g,T}(k_1,\ldots,k_T) &:= \sigma_{(k_1,\ldots,k_T)}^* [Z]\in \Chow^g(\sM_{g,T}^{rt}).
\end{align*}
\end{defn}

\begin{thm}\label{thm:Qpoly}\text{}
\begin{enumerate}
\item[(i)] The function $Q_{g,T}$ is equivariant in its $T$ entries with respect to the action of the symmetric group permuting the marked points.
\item[(ii)]  If $F: \sM_{g,T}^{rt} \rightarrow \sM_{g,T-1}^{rt}$ is the
  forgetful morphism,
\[
Q_{g,T}(x_1, \dots, x_{T-1}, 0) = F^* Q_{g, T-1}(x_1, \dots,
x_{T-1}).
\]
\item[(iii)] $Q_{g,T}$ is a (Chow-valued) polynomial.
\item[(iv)] $Q_{g,T}$ is homogeneous of degree $2g$.
\end{enumerate}
\end{thm}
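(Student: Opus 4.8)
The plan is to prove the four parts in order, with parts (i) and (ii) serving as warm-ups and inputs for the polynomiality statement (iii), and (iv) following from the structure of the argument for (iii).

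For part (i), I would observe that the construction of $\sigma_{(k_1,\ldots,k_T)}$ is manifestly symmetric: permuting the marked points $p_1,\ldots,p_T$ and simultaneously permuting the integers $k_1,\ldots,k_T$ leaves the line bundle $\pi^\ast\sO_{\oC}(\sum k_i\pi(p_i))$ unchanged, and the zero section $Z$ is canonical, so the pullback class is unchanged. For part (ii), the point is that when $k_T=0$ the marked point $p_T$ contributes nothing to the divisor defining $\sigma$, so $\sigma_{(k_1,\ldots,k_{T-1},0)}$ on $\sM^{rt}_{g,T}$ is the pullback along $F$ of $\sigma_{(k_1,\ldots,k_{T-1})}$ on $\sM^{rt}_{g,T-1}$ — here one has to be slightly careful that $F$ is compatible with the contraction maps $\pi$ to the genus $g$ component and that the universal Jacobian over $\sM^{rt}_{g,T}$ is the pullback of that over $\sM^{rt}_{g,T-1}$, which holds because adding a rational tail or a marked point on the genus $g$ component does not change the Jacobian. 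Then $Q_{g,T}(x_1,\ldots,x_{T-1},0)=\sigma^\ast F^\ast[Z] = F^\ast\sigma^\ast[Z]=F^\ast Q_{g,T-1}(x_1,\ldots,x_{T-1})$ by compatibility of flat (or l.c.i.) pullback with composition.

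For part (iii), the key idea (Vakil's argument) is that the section $\sigma_{(k_1,\ldots,k_T)}$ depends on the $k_i$ only through the linear combination $\sum k_i[\pi(p_i)]$ inside the universal Jacobian, and that the Jacobian is a \emph{group} scheme over $\sM^{rt}_{g,T}$. Concretely, let $s_i:\sM^{rt}_{g,T}\to J_{g,T}$ be the section sending $(C,p_\bullet)$ to $\pi^\ast\sO_{\oC}(\pi(p_i)-\pi(p_j))$ for a fixed reference index $j$ (or more symmetrically use the Abel–Jacobi sections after rigidifying); then $\sigma_{(k_1,\ldots,k_T)}$ is the sum $\sum_i k_i s_i$ computed using the group law on $J_{g,T}$. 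Now I would invoke the theorem of the cube / bilinearity of the Poincaré-type structure: the pullback of $[Z]$ along an $n$-fold sum of sections $\sum k_i s_i$ is a polynomial in the $k_i$ with coefficients built from intersections of the classes $s_i^\ast(\text{components of the theta/cube structure})$. More elementarily, one can use that $[Z]\in\Chow^g(J_{g,T})$ and that translation by a section acts on $\Chow^\ast(J_{g,T})$, together with the fact that $\Chow^\ast(J_{g,T})$ is, after passing to a suitable filtration, polynomial under the group law — i.e. $(n\cdot s)^\ast[Z]$ is polynomial in $n$ for a single section $s$ by the projection formula applied to multiplication-by-$n$ on an abelian scheme, and then multilinearity in several sections. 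The main obstacle, and the step I would spend the most care on, is making this "$[Z]$ is polynomial under the group law" precise in the Chow ring of a not-necessarily-projective abelian scheme over the non-proper base $\sM^{rt}_{g,T}$: one wants a clean statement that $m^\ast[Z]\in\Chow^g(J\times_{\sM^{rt}} J)$ decomposes, via the Künneth-type components coming from the eigenspace decomposition under $[n]$-pullback (Beauville's decomposition), into finitely many bihomogeneous pieces, so that pulling back along $(\sum k_i s_i)$ produces a genuine polynomial of bounded degree in the $k_i$. Once that is in place, evaluating at enough integer points (or simply reading off the degrees) shows $Q_{g,T}$ agrees with a fixed polynomial.

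Finally, for part (iv), homogeneity of degree $2g$ follows from weighing the variables correctly: under the simultaneous scaling $k_i\mapsto \lambda k_i$ the section $\sigma$ is replaced by $[\lambda]\circ\sigma$ (multiplication by $\lambda$ in the group scheme $J_{g,T}$), and $[\lambda]^\ast[Z]$ — since $Z$ is the identity section and $[\lambda]$ is finite flat of degree $\lambda^{2g}$ onto an abelian scheme of relative dimension $g$ — pulls back so that $([\lambda]\circ\sigma)^\ast[Z]=\lambda^{2g}\sigma^\ast[Z]$ at the level of the top Beauville component, and the lower-weight components are killed by the rational-tails hypothesis (the Jacobian of a rational tails curve is the Jacobian of its genus $g$ part, an abelian variety of dimension exactly $g$, so $[Z]$ is a class of codimension $g$ = the relative dimension and only the top symmetric piece survives). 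Hence every monomial appearing in the polynomial $Q_{g,T}(k_1,\ldots,k_T)$ has total degree exactly $2g$ in the $k_i$. I would double-check the exponent $2g$ rather than $g$ by testing against the $T=2$ case, where $Q_{g,2}(d,-d)$ is expected to scale like $d^{2g}$, consistent with Theorem~\ref{theorem:T=2}.
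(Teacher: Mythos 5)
Your overall route is the paper's route: express $\sigma_{(k_1,\ldots,k_T)}$ using the group law on the abelian scheme $J_{g,T}/\sM^{rt}_{g,T}$, invoke the eigenspace decomposition of Chow groups under multiplication-by-$n$ (Beauville for abelian varieties, extended to abelian schemes over a smooth base by Deninger--Murre, which is exactly what handles the non-proper base you correctly flag), and read off polynomiality and the degree. Parts (i) and (ii) match the paper essentially verbatim.

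Two places where your write-up diverges from, and is weaker than, the paper's argument. First, for (iii), the paper does not invoke the theorem of the cube or a Poincar\'e-type bilinear structure; it factors $\sigma_{(k_1,\ldots,k_T)} = \Sigma \circ (\bk_1,\ldots,\bk_{T-1})\circ\tau$ through the $(T-1)$-fold fiber power $J^{T-1}_{g,T}$, factors $(\bk_1,\ldots,\bk_{T-1})$ as a composite of single-factor multiplications, applies Deninger--Murre \cite[Thm.~2.19]{DM91} to each factor separately to get bounded-degree polynomiality in each $k_i$ individually, and then uses the elementary interpolation lemma (Lemma~\ref{lem:varpoly}) to upgrade separate polynomiality to joint polynomiality. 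Your multilinearity-via-cube sketch is in the same spirit but is not actually carried out, and the interpolation step is compressed into ``evaluating at enough integer points''; the paper's factorization is the cleaner way to make this precise.

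Second, and more substantively, your justification for (iv) contains a genuine error. You assert that ``the lower-weight components are killed by the rational-tails hypothesis'' because the relative dimension is exactly $g$. That is not the mechanism. The fact that $[\lambda]^\ast[Z]=\lambda^{2g}[Z]$ — i.e.\ that the class of the zero section lies entirely in the top Beauville eigenspace $\Chow^g_{(0)}$ — is a theorem about zero sections of abelian schemes (\cite[Prop.~2.18]{DM91}, cf.\ Looijenga), not a consequence of $J_{g,T}$ having the ``right'' dimension. The rational-tails (or compact-type) hypothesis is what guarantees $J_{g,T}$ is an abelian scheme so that Deninger--Murre applies at all; it does not by itself force $[Z]$ into the top piece. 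So your conclusion is correct but the reason given for it would not survive scrutiny; you need to cite the Deninger--Murre/Beauville statement that $\mathbf{t}^\ast[Z]=t^{2g}[Z]$ rather than deduce it from dimension counting.
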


Notice first that (ii) implies the following:

\begin{cor} \label{cor:pullback} For fixed $g$, all $Q_{g,T}$  are determined by the Chow-valued polynomial
$Q_{g,2g+1}$.
\end{cor}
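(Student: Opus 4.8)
The plan is to bootstrap from the structural properties of $Q_{g,T}$ collected in Theorem~\ref{thm:Qpoly}: polynomiality (iii), homogeneity of degree $2g$ (iv), compatibility with forgetful pullbacks (ii), and symmetry (i). The observation that drives everything is purely combinatorial: in a homogeneous polynomial of degree $2g$, any monomial $k_{i_1}^{a_1}\cdots k_{i_r}^{a_r}$ with all $a_j\ge 1$ satisfies $r\le a_1+\cdots+a_r=2g$, so no monomial involves more than $2g$ of the variables. Thus a monomial of $Q_{g,T}$ only ``sees'' at most $2g$ marked points, and when $T>2g$ this is strictly fewer than all of them.

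First I would run the recursion downward. For $2\le T\le 2g+1$ the forgetful morphism $F\colon\sM_{g,T+1}^{rt}\to\sM_{g,T}^{rt}$ admits a section (given by a marked point), so $F^*\colon\Chow^g(\sM_{g,T}^{rt})\to\Chow^g(\sM_{g,T+1}^{rt})$ is injective. Since (ii) gives $F^*Q_{g,T}(x_1,\dots,x_T)=Q_{g,T+1}(x_1,\dots,x_T,0)$, the polynomial $Q_{g,T}$ is recovered from $Q_{g,T+1}$ by setting one variable to $0$ and inverting $F^*$. Iterating, $Q_{g,T}$ is determined by $Q_{g,2g+1}$ for every $2\le T\le 2g+1$.

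Next I would treat $T>2g+1$. Write $Q_{g,T}=\sum_m c_m\,m$ with $m$ ranging over the degree-$2g$ monomials in $k_1,\dots,k_T$ and $c_m\in\Chow^g(\sM_{g,T}^{rt})$. For a subset $S\subseteq\{1,\dots,T\}$, repeatedly applying (i) to permute the point being forgotten and (ii) to forget it shows that specializing $k_j=0$ for all $j\notin S$ yields $\sum_{m:\ \mathrm{supp}(m)\subseteq S} c_m\,m=\pi_S^*\,Q_{g,|S|}$, where $\pi_S\colon\sM_{g,T}^{rt}\to\sM_{g,|S|}^{rt}$ forgets the points not in $S$ and $Q_{g,|S|}$ is read off via the order-preserving bijection $S\cong\{1,\dots,|S|\}$. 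By the combinatorial observation, every monomial $m$ of $Q_{g,T}$ has $|\mathrm{supp}(m)|\le 2g$; choosing $S\supseteq\mathrm{supp}(m)$ with $2\le|S|\le 2g$ identifies $c_m$ with $\pi_S^*$ of a coefficient of $Q_{g,|S|}$, and the latter is determined by $Q_{g,2g+1}$ by the downward step. Hence so is $Q_{g,T}$. (For $g=0$ the class $Q_{0,T}$ lies in $\Chow^0(\sM_{0,T}^{rt})$ and is constant, so the statement is vacuous there.)

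I do not expect a serious obstacle here; the only thing requiring care is bookkeeping. One must check that (i) genuinely permits forgetting an \emph{arbitrary} marked point and not merely the last one; that the iterated forgetful morphisms compose to the single forgetful morphism $\pi_S$; and, most importantly, that $\pi_S^*$ in the identity above is applied to the Chow-valued coefficients rather than to the monomials, so that the identification of coefficients is a literal equality in $\Chow^g$. With these points nailed down the argument is formal.
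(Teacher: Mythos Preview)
Your argument is correct and is exactly how the paper's one-line justification (``(ii) implies the following'') is meant to be unpacked; the paper itself leaves the details to the reader, and your two-step argument (downward via injectivity of $F^*$, upward via the support bound $|\mathrm{supp}(m)|\le 2g$ coming from homogeneity) is the intended one. Indeed, when the paper applies this corollary in Section~7.4 it spells out precisely your upward step: ``pull back the coefficients \ldots\ iteratively through maps forgetting marked points. Then, apply the equivariance from part~(i).'' Two minor comments: your downward iteration only needs $2\le T\le 2g$ (not $2g+1$), and it is worth noting that the paper's phrase ``(ii) implies'' is slightly loose---as you correctly observe, (iii) and (iv) are essential for the upward direction and (i) is needed to forget an arbitrary marked point.
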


Theorem \ref{thm:Qpoly} relies on the following result of Deninger and Murre, extending work of Beauville and, earlier, Mukai.

\begin{thm}[{\cite[Thm.~2.19]{DM91}}]\label{thm:Ftransform} Suppose $\pi: A \rightarrow X$ is an abelian scheme over a smooth finite type stack $X$.  For any $k \in \ZZ$ let $\bk : A \rightarrow A$ be the multiplication-by-$k$ map.  For each nonnegative integer $t$, define 
\[
\Chow_{p,t}(A) = \{ \xi \in \Chow_p(A) : \bk^* \xi = k^t \xi \text{ for all $k \in \Z$} \}
\]
Then $\Chow_p(A) = \mathop{\bigoplus}_{t = 0}^N \Chow_{p,t}(A)$ where $N$ depends only on $dim A/X$ and $\dim X$.
\end{thm}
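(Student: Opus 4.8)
Since Theorem~\ref{thm:Ftransform} is quoted from \cite{DM91} (with rational coefficients, as is implicit throughout the paper), the plan is to recall the structure of its proof, which splits into a formal layer and a geometric layer. Write $[\ell]\colon A\to A$ for the multiplication-by-$\ell$ endomorphism (so $[k]=\bk$ in the notation of the statement). I would isolate the formal layer as the assertion $(\ast)$: for each $\xi\in\Chow_p(A)$ the function $\Z\to\Chow_p(A)$, $k\mapsto[k]^*\xi$, agrees with a polynomial in $k$ of degree at most $N$, where $N$ depends only on $\dim A/X$ and $\dim X$. Granting $(\ast)$, write $[k]^*\xi=\sum_{t=0}^{N}k^t\,\xi_t$ with the $\xi_t\in\Chow_p(A)$ independent of $k$; setting $k=1$ gives $\xi=\sum_t\xi_t$. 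Applying $[\ell]^*$ and using the composition rule $[\ell]^*[k]^*=[\ell k]^*$ gives $\sum_t k^t\,[\ell]^*\xi_t=\sum_t(\ell k)^t\xi_t$ as an identity of polynomials in $k$; comparing coefficients yields $[\ell]^*\xi_t=\ell^t\xi_t$ for every $\ell$, i.e.\ $\xi_t\in\Chow_{p,t}(A)$. Hence $\Chow_p(A)=\sum_{t=0}^{N}\Chow_{p,t}(A)$, and the sum is direct: from $\sum_t\eta_t=0$ with $\eta_t\in\Chow_{p,t}(A)$ one gets $\sum_t k^t\eta_t=0$ for all $k$, and evaluating at $N+1$ distinct integers and inverting a Vandermonde matrix forces every $\eta_t=0$.

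The geometric layer supplies $(\ast)$ and is the real content of \cite{DM91}: the Fourier transform for abelian schemes. Over $X$ one has the dual abelian scheme $\hat A$, the normalized Poincar\'e bundle $\mathcal{P}$ on $A\times_X\hat A$, and the Fourier transform $\mathcal{F}(\xi)=\hat p_*\bigl(p^*\xi\cdot\exp(c_1(\mathcal{P}))\bigr)$, where $p,\hat p$ are the projections of $A\times_X\hat A$. From Fourier inversion together with the bilinearity of $\mathcal{P}$ — which makes $\mathcal{F}$ turn pull-back and push-forward by the multiplication maps on $A$ into those on $\hat A$ — one extracts a complete orthogonal system of relative Chow--K\"unneth projectors $\pi_0,\dots,\pi_{2g}\in\Chow^{g}(A\times_X A)$, $g=\dim A/X$, summing to the class of the relative diagonal, with $[k]^*$ acting on the image of $\pi_i^*$ by the scalar $k^i$. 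Since $\sum_i\pi_i^*=\mathrm{id}$,
\[
[k]^*\xi \;=\; \sum_{i=0}^{2g} k^{i}\,\pi_i^*\xi,
\]
which is polynomial of degree $\le 2g$; this already yields $(\ast)$ with $N=2\dim A/X$, and the sharper control on which exponents $t$ genuinely occur for a fixed $p$ comes from DM's more careful bookkeeping, which brings in $\dim X$ via the codimension shifts of the $\pi_i^*$ and the vanishing of $\Chow_p(A)$ outside a bounded window.

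The main obstacle, and where essentially all the work in \cite{DM91} lies, is the construction of these relative Chow--K\"unneth projectors: one must verify that the Fourier transform makes sense with $\Q$-coefficients over an arbitrary smooth base, that inversion and the intertwining identities persist in the relative setting, and that the $\pi_i$ are mutually orthogonal idempotents whose sum is the diagonal. A point specific to the present statement is that $X$ is only a smooth finite-type Deligne--Mumford stack rather than a scheme; after the scheme case one descends along a smooth atlas $U\to X$, which is harmless with rational coefficients since the relative Jacobian, the Poincar\'e bundle, and the Chow groups involved are all compatible with smooth base change — but this reduction should be recorded explicitly.
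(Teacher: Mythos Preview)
The paper does not give its own proof of this statement: Theorem~\ref{thm:Ftransform} is stated with the citation \cite[Thm.~2.19]{DM91} and used as a black box in the proof of Theorem~\ref{thm:Qpoly} that follows. There is therefore nothing in the paper to compare your proposal against. Your sketch is a reasonable summary of the Deninger--Murre argument (Fourier transform on abelian schemes producing a relative Chow--K\"unneth decomposition, from which the eigenspace decomposition under $[k]^*$ follows formally), and your remark about descending from a smooth atlas to handle the case where $X$ is a stack is a sensible addendum.
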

\begin{proof}[Proof of Theorem~\ref{thm:Qpoly}]
The function $Q_{g,T}$ satisfies (i) by construction, and (ii) follows from the commutativity of
\[
\xymatrix{
J_{g,T} \ar[r] \ar[d] & \sM_{g,T}^{rt} \ar[d] \\
J_{g,T-1} \ar[r] & \sM_{g,T-1}^{rt}. }
\]

To prove (iii) and (iv), we reinterpret the definition of $Q_{g,T}(k_1, \dots, k_t)$ as follows.  Consider the diagram
\[
\xymatrix{J^{T-1}_{g,T} \ar[rr]^-{(\bk_1, \dots, \bk_{T-1})} \ar[drr] & & J_{g,T}^{T-1} \ar[rr]^\Sigma \ar[d]  & & J_{g,T} \ar[dll] \\
& & \sM_{g,T}^{rt} \ar@/^/[ull]^{\tau} \ar@/_/[urr]_{\sigma_{k_1, \dots, k_t}}}
\]
where
\begin{itemize}
\item  $J^n_{g,T} $ is the $n$-th fiber power of $J_{g,T}$ over the base $\sM_{g,T}^{rt}$,
\item  $\tau$ is the section $(C, p_1, \dots, p_T) \mapsto  (C, \sO_C(p_1- p_T), \sO_C(p_2-p_T), \dots, \sO_C(p_{T-1}-p_T)),$
\item  $(\bk_1, \dots, \bk_{T-1})$ denotes factor-wise  multiplication in the abelian scheme, and
\item  and $\Sigma$ is summation. 
\end{itemize}   
Then  $\sigma_{k_1, \dots, k_T} = \Sigma \circ ( \bk_1,
\dots, \bk_{T-1}) \circ \tau$, so 
\[
Q_{g,T}(\bk_1, \dots, \bk_T) = \tau^* \: ( \bk_1, \dots, \bk_{T-1})^* \:
\Sigma^* [Z] .
\]

\noindent The morphism $(\bk_1, \dots, \bk_{T-1})$ factors as
\[
(\bk_1, 1, \dots, 1) \circ (1, \bk_2, 1, \cdots) \circ \cdots \circ
(1, \dots, 1, \bk_{T-1}).
\] 

Applying Theorem \ref{thm:Ftransform} to each $i \in \{1, \dots, T-1\}$ (taking $A = J^T$ and $X = J^{T-1}$) shows that $Q_{g,T}(\bk_1, \dots, \bk_T)$ is a polynomial in each $\bk_i$ separately, of degree at most $N$.  This implies that $Q_{g,T}$ is a polynomial by Lemma \ref{lem:varpoly}, below, and establishes (iii).

Finally, to prove (iv), note that
$$
Q(t \bk_1, \dots, t \bk_T) = \tau^* \: ( \bk_1, \dots, \bk_{T-1})^* \:
\Sigma^* \: \mathbf{t}^* [Z],$$
and $\mathbf{t}^*[ Z] = t^{2g} [Z]$ by Proposition \cite[Proposition~2.18]{DM91}, as observed in  \cite[Thm.~2.10]{l:trmg}.  Thus $Q(t k_1, \cdots, t k_T) = t^{2g} Q(k_1, \dots, k_T)$.
\end{proof}

\begin{lem} \label{lem:varpoly}
Suppose  $Q(x_1, \dots, x_t)$ is a function from $(\Z_{\geq 0})^t$ to a $\Q$-vector space, such if all but one of variables are fixed, the resulting single-variable function is a polynomial of degree at most $N$.  Then $Q$ is a polynomial. 
\end{lem}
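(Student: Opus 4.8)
The plan is to prove Lemma~\ref{lem:varpoly} by induction on the number of variables $t$. The base case $t=1$ is the hypothesis itself. For the inductive step, I would fix the last variable $x_t = m$ and consider the function $Q_m(x_1,\dots,x_{t-1}) := Q(x_1,\dots,x_{t-1},m)$. By hypothesis, for each fixed value of $m$ and each choice of which of the first $t-1$ variables is allowed to vary, the resulting single-variable function is a polynomial of degree at most $N$; so the inductive hypothesis applies to $Q_m$, and $Q_m$ is a polynomial in $(x_1,\dots,x_{t-1})$. The remaining work is to show that the coefficients of $Q_m$, as functions of $m$, are themselves polynomials of bounded degree, so that assembling them gives a genuine polynomial in all $t$ variables.

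To extract and control those coefficients, I would use finite differences (equivalently, Lagrange interpolation over $\{0,1,\dots,N\}$). A function on $\Z_{\geq 0}$ is a polynomial of degree at most $N$ if and only if its $(N+1)$-st finite difference vanishes identically; and when it is such a polynomial, it is uniquely determined by its values at $x=0,1,\dots,N$ via the Newton forward-difference formula. So for the multivariable function, define $P(x_1,\dots,x_t)$ to be the unique polynomial of degree at most $N$ in each variable separately that agrees with $Q$ on the grid $\{0,1,\dots,N\}^t$ — concretely, $P$ is the tensor product of the one-dimensional interpolation operators. The claim is that $P = Q$ on all of $(\Z_{\geq 0})^t$.

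To verify $P=Q$, I would fix $x_1,\dots,x_{t-1}$ arbitrary in $\Z_{\geq 0}$ and compare $P$ and $Q$ as functions of $x_t$ alone. By hypothesis $Q(x_1,\dots,x_{t-1},-)$ is a polynomial of degree at most $N$; and $P(x_1,\dots,x_{t-1},-)$ is also a polynomial of degree at most $N$ by construction. So it suffices to show they agree at the $N+1$ points $x_t \in \{0,\dots,N\}$, i.e.\ to show $P(x_1,\dots,x_{t-1},j) = Q(x_1,\dots,x_{t-1},j)$ for $j=0,\dots,N$ and arbitrary $x_1,\dots,x_{t-1}$. But for each fixed $j$, both sides are polynomials of degree at most $N$ in each of $x_1,\dots,x_{t-1}$ separately (for $P$ this is immediate; for $Q(-,\dots,-,j)$ this is the inductive hypothesis applied as above), so by the inductive hypothesis it is enough that they agree on the grid $\{0,\dots,N\}^{t-1}$ in the first $t-1$ coordinates — which holds by the defining property of $P$. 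This closes the induction.

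I do not expect a serious obstacle here; the argument is a standard interpolation bootstrap. The one point requiring a little care is the bookkeeping that makes the induction go through cleanly: one must phrase the inductive hypothesis as "a function on $(\Z_{\geq 0})^t$ that is, in each variable separately with the others fixed, a polynomial of degree $\le N$, is globally a polynomial of degree $\le N$ in each variable," so that it can be reapplied both to the slices $Q_m$ (varying $x_1,\dots,x_{t-1}$) and to the slices $Q(-,\dots,-,j)$ needed to compare on the grid. With that formulation fixed at the outset, everything else is routine finite-difference calculus, and the degree bound $N$ is what guarantees the interpolation grid $\{0,\dots,N\}^t$ is large enough.
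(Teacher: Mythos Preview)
Your proposal is correct and takes essentially the same approach as the paper: both arguments observe that $Q$ is determined by its values on the grid $\{0,\dots,N\}^t$ via Lagrange (or Newton) interpolation, and that the resulting interpolant is a polynomial agreeing with $Q$ everywhere. The paper's proof compresses this to two sentences, whereas you have spelled out the inductive verification that the interpolant matches $Q$ off the grid; your elaboration is a faithful unpacking of exactly the same idea.
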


\begin{proof}
The function $Q$ is determined by its values on $\{ 0, \dots, N \}^t$ using the interpolation formula for polynomials.  The interpolation formula describes $Q$ as a polynomial.
\end{proof}

\begin{rem}
The above arguments above hold without change over the locus $\sM_{g,T}^{ct}$ of curves of compact type. In homology, many of these results follow directly from the explicit computations of Richard Hain \cite{H11}. 
\end{rem}

%
%
\section{Comparison of virtual classes} \label{sec:compare}

Denote by $$\cms_{g,n}(\alpha,\beta) := \cms_{g,n}(\PP^1;\alpha 0,\beta \infty)$$ the moduli space of degree $d$ stable maps to $\PP^1$ relative to the points $0$ and $\infty$ with prescribed ramification given by partitions $\alpha\vdash d$ and $\beta\vdash d$ respectively.  Let $l(\alpha)$ and $l(\beta)$ be the lengths of the partitions, and $T=l(\alpha)+l(\beta)$ the total length.  We denote by $\abrub{g,n}{\alpha}{\beta}$ the variant of this space in which the target is an unparameterized or ``rubber" $\PP^1$.  In Theorem 1 of \cite{FP05}, Faber and Pandharipande show that the pushforward through the forgetful stabilization morphism $\mu$ of the respective virtual classes lie in the tautological ring of $\cms_{g,n+T}$.

We are concerned with the pushforward $\mu_\ast\virclass{\abrub{g}{\alpha}{\beta}}\in\sR^\ast(\cms_{g,T})$ and, in particular, its restriction to rational tails. 

\begin{defn}\label{def:pclass} The tautological class $P_{g,T}(\alpha,\beta)$ is the $g$-codimensional Chow class
\[
P_g(\alpha_1,\ldots,\alpha_{l(\alpha)};\beta_1,\ldots,\beta_{l(\beta)}):=\mu_\ast\virclass{\abrub{g}{\alpha}{\beta}^{rt}} \in \sR^g(\sM^{rt}_{g,T}).
\]
\end{defn}

In this section and the next, we will simplify the notation for our moduli spaces by suppressing the various subscripts for locally constant data.  The reader may imagine either that these data have been fixed, or else that each moduli space is the disjoint union over discrete parameters of moduli spaces with appropriate decorations.  We shall write:

\begin{itemize}
  \item $\oM_{\rel}(\sP/\BGm)$ for the moduli space of stable relative maps from curves with rational tails to ``rubber~$\bP^1$'';
  \item $\oM_{\rel}(\sP)$ for the moduli space of stable relative maps from curves with rational tails to to $\sP = [\bP^1 / \Gm]$;
  \item $J$ for the relative Jacobian over the moduli space of smooth curves;
  \item $Z$ for the moduli space of smooth curves, embedded as the zero section of its relative Jacobian.
\end{itemize}

We prove Theorem~\ref{thm:compare} using a comparison technique introduced by Costello \cite{C06}.  We will show in Theorem~\ref{thm:cart} that the square
\begin{equation} \label{diag:costello}\xymatrix{
      \oM_{\rel}(\sP / \BGm) \ar[r] \ar[d] & Z \ar[d] \\
      \oM_{\rel}(\sP) \ar[r] & J
    }
\end{equation}
is cartesian.  This provides a second relative obstruction theory for $\oM_{\rel}(\sP/\BGm)$ over $\oM_{\rel}(\sP)$, obtained by pullback from the normal bundle of $Z$ in $J$, in addition to the natural one that it used to define the virtual fundamental class of $\oM_{\rel}(\sP/\BGm)$.

In Sections~\ref{sec:mapsintofibers} and~\ref{sec:cartesian} we describe the stacks $\oM_{\rm rel}(\sP / \BGm)$ and $\oM_{\rm rel}(\sP)$, explain the arrows in (\ref{diag:costello}), and show that this diagram is cartesian.  In Section~\ref{sec:costello} we show the diagram satisfies the further hypotheses of Costello's Theorem \cite[Theorem~5.0.1]{C06}, and provide a proof of our theorem, contingent on some obstruction theoretic details that we postpone to Section~\ref{sec:OT}.

%
\subsection{Moduli stacks of targets}
Let $\sT$ be Jun Li's stack of expanded degenerations, parameterizing expansions of the target for relative stable maps.  For more background about $\sT$, see \cite[Definition~4.4]{L01} (where $\sT$ is denoted $\fZ^{\rm rel}$), \cite[Section~2.5]{GV05}, and \cite{ACFW}.  It was pointed out in~\cite{GV05} that $\sT$ is the moduli space of $3$-marked curves such that all nodes separate the first two markings from the third; a proof of this appears in~\cite{ACFW}.

Using this interpretation, we can identify $\sT^2 = \sT \times \sT$ with the moduli space of $3$-marked semi-stable curves such that all nodes separate the first and second markings.  One may construct such a family over $\sT \times \sT$ by gluing together the two families corresponding to the first and second projections along the components containing the first two markings, which may be identified canonically with $\bP^1$.  The reader may verify that this map gives the claimed isomorphism, say by constructing an inverse.

The stack $\sT^2$ will play the role of the moduli space of targets for relative stable maps to~$\sP$.

\begin{rem}
The stack $\sT^2$ can be also viewed as an open substack of the universal family of the moduli space of two pointed semi-stable rational curves $\fM_{0,2}^{\sst}$.
\end{rem}

The space of targets for relative maps to a non-rigid target is denoted $\sT_\sim$ in \cite{GV05}.  It is the open substack of $\fM_{0,2}$ parameterizing chains of rational curves where all nodes separate the two marked points.  Since we are working with rational tails curves, we have the privelege of working with a slightly different moduli space of targets for non-rigid relative stable maps.

Let $C$ be the source of a relative stable map and assume that $C$ is a rational tails curve with $C \rightarrow \oC$ the contraction of $C$ onto its distinguished irreducible component.  The image of $\oC$ in the target expansion of $\sP$ distinguishes a specific component of the expanded target.  Therefore, all of the rubber stable maps considered here will come with a distinguished component of the expanded target, and we build this datum into our definition of the moduli space of expanded targets.

To be precise, we define a stack $\tsT$ of expanded targets for rubber maps.  An $S$-point of $\tsT$ is a family of $2$-marked semistable curves $P\to S$ together with a $\Gm$-equivariant embedding of a $\Gm$-torsor $Q$ over $S$ into $P$ (recall that $P$ has a canonical balanced action of $\Gm$ \cite[Proposition~3.2.1]{ACFW}).  At the level of $\C$-points, the torsor $Q$ is simply the smooth $\C^\ast$ given by the complement of the two nodes inside the distinguished rational component of the chain.

There is a map $\sT^2 \rightarrow \tsT$ forgetting the third marking and the parameterization of the distinguished component.

\begin{prop} \label{prop:1}
  The natural map $\sT^2 \rightarrow \tsT$ admits a retraction, inducing an isomorphism $\tsT \simeq \sT^2 \times \BGm$.
\end{prop}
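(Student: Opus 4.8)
The plan is to recognize the natural map $f\colon\sT^2\to\tsT$ as a $\Gm$-torsor and to show that it is canonically trivial, so that $\tsT\simeq[\sT^2/\Gm]\simeq\sT^2\times\BGm$ in a way that carries $f$ to the inclusion $\mathrm{id}_{\sT^2}\times p$ along a basepoint $p\colon\operatorname{Spec}\C\to\BGm$; the retraction of the proposition is then the first projection $\sT^2\times\BGm\to\sT^2$. The underlying point is that a third marking on the distinguished component is exactly a trivialization of $Q$, while the ambiguity in such a trivialization is absorbed by the \emph{automorphisms} of the target supplied by the balanced $\Gm$-action.

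The first step is to reconcile the two moduli descriptions so as to see that an $S$-point of $\sT^2$ is the same datum as an $S$-point $(P\to S,\ Q\hookrightarrow P)$ of $\tsT$ together with a section of $Q\to S$. Unwinding the identification of $\sT^2$ with families of $3$-marked semistable chains $(C;r_1,r_2,r_3)$ in which every node separates $r_1$ from $r_2$: forgetting $r_3$ leaves the $2$-marked chain $P=C$; the component $D$ carrying $r_3$ acquires a preferred status; and $Q:=D\setminus D^{\Gm}$, the complement of the two $\Gm$-fixed sections of $D$, is a $\Gm$-torsor over $S$ for the restriction of the canonical balanced $\Gm$-action on $P$. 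Here one invokes \cite[Proposition~3.2.1]{ACFW}: the balanced action fixes the two markings of $P$ and acts freely and transitively on $D\setminus D^{\Gm}$ for every component $D$; and since $r_1,r_2,r_3$ are pairwise disjoint sections, $r_3$ always lands in $Q$ (this includes the unexpanded stratum, where $C=\PP^1$, $D=C$, and $Q=\PP^1\setminus\{r_1,r_2\}$). The leftover datum $r_3\in Q(S)$ is precisely a section of $Q$, and conversely such data rebuild a point of $\sT^2$. Since any two sections of a $\Gm$-torsor differ by a unique unit, $\sT^2\times_{\tsT}\sT^2\simeq\sT^2\times\Gm$, and sections of $Q$ exist fppf-locally on $S$; hence $f$ is a $\Gm$-torsor and $\tsT\simeq[\sT^2/\Gm]$, where $t\in\Gm$ acts on $(C;r_1,r_2,r_3)$ by $r_3\mapsto t\cdot r_3$ through the torsor structure, i.e.\ through the balanced action on $C$.

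The second step is to trivialize this $\Gm$-action. For each $t$ the balanced $\Gm$-action on $C$ provides an automorphism $\phi_t$ of $C$ over $S$ fixing $r_1$ and $r_2$ and carrying $r_3$ to $t\cdot r_3$, hence an isomorphism $\phi_t\colon(C;r_1,r_2,r_3)\xrightarrow{\sim}t\cdot(C;r_1,r_2,r_3)$ in $\sT^2(S)$; since $t\mapsto\phi_t$ is a homomorphism, these isomorphisms are coherent and natural in $S$, which is exactly an equivariant trivialization of the action. Therefore $[\sT^2/\Gm]\simeq\sT^2\times[\operatorname{Spec}\C/\Gm]=\sT^2\times\BGm$; composing with $\tsT\simeq[\sT^2/\Gm]$ gives the isomorphism asserted, and tracing $f$ through the identifications shows it becomes $\mathrm{id}_{\sT^2}\times p$, so that the first projection is a retraction. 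Concretely, this retraction sends $(P,Q)$ to $P\times^{\Gm}Q^{\vee}$, with its two $\Gm$-fixed sections as the first two markings and, as the third, the tautological section coming from the canonical trivialization of $Q\times^{\Gm}Q^{\vee}$.

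The step I expect to be the main obstacle is the reconciliation in the second paragraph: aligning the bookkeeping of the two moduli presentations precisely enough to identify $f$ with the $\Gm$-torsor attached to the universal $Q$ — in particular, checking on every stratum (including unexpanded targets) that $r_3$ lies on a component on which the balanced action is free, and that an $S$-point of $\tsT$ remembers nothing about $Q\hookrightarrow P$ beyond the distinguished component and its torsor structure — together with the mild $2$-categorical care needed to upgrade the family $\{\phi_t\}$ to an honest equivariant trivialization. Everything downstream of those two points is formal.
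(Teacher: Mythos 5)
Your proof is correct, and it converges on exactly the paper's key construction — the contracted product $P\otimes Q^\vee$ appears in your final sentence and is precisely the map written down in the paper. The route you take to get there is somewhat longer and more conceptual: you first identify $\sT^2\to\tsT$ as a $\Gm$-torsor (the fiber over $(P,Q)$ being $Q$ itself), then trivialize that torsor by observing that the $\Gm$-action on $\sT^2$ is implemented by the inner automorphisms $\phi_t$ supplied by the balanced action, giving $\tsT\simeq[\sT^2/\Gm]\simeq\sT^2\times\BGm$. The paper dispenses with the torsor-theoretic scaffolding and simply writes the retraction directly: given $(P,Q)\in\tsT(S)$, form $P'=P\otimes Q^\vee$, observe that the identity section of $\Gm=Q\otimes Q^\vee\hookrightarrow P'$ makes $P'$ an object of $\sT^2$, and check that $(P,Q)\mapsto(P',Q)$ is an isomorphism onto $\sT^2\times\BGm$ because $P=P'\otimes Q$. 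The two arguments are duals of one another (trivialize the torsor vs.\ exhibit the section/retraction), and their content is the same; the paper's version is more economical, while yours makes the torsor structure of the forgetful map explicit, which some readers may find clarifying. One small stylistic note: the step you flag as ``the main obstacle'' — reconciling the two moduli descriptions on every stratum and handling the $2$-categorical coherence of $\{\phi_t\}$ — is genuine work you carry out correctly, but in the paper it is absorbed invisibly into the contracted-product formalism, which is one reason the direct construction is tidier.
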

\begin{proof}
  Let $P \rightarrow S$ be an $S$-point of $\tsT$.  By definition, we are given a $\Gm$-torsor $Q$ over $S$ and a $\Gm$-equivariant embedding $Q \rightarrow P$.  Let $P' = P \tensor Q^\vee$, where $Q^\vee$ is the opposite torsor of $Q$  and $P \tensor Q^\vee = P \fp^{\Gm} Q^\vee$, the quotient of $P \fp_S Q$ by $\Gm$, acting diagonally (equivalently, the quotient of $P \fp_S Q^\vee$ by $\Gm$ acting anti-diagonally).  We have an equivariant embedding $\Gm = Q \tensor Q^\vee \rightarrow P \tensor Q^\vee$.  The image of the identity section of $\Gm$ gives a section of $P'$, so $P'$ is an object of $\sT^2$.  The pair $(P', Q)$ gives a map $\tsT \rightarrow \sT^2 \times \BGm$.  Since $P = P' \tensor Q$, we can recover $P$ uniquely from $P'$ and $Q$ so this map is an isomorphism.
\end{proof}

%
\subsection{Stable maps into the fibers of $\sP\to B\Gm$ and $\sP\to pt$} \label{sec:mapsintofibers}
 Denote by $\sP \simeq [\bP^1 / \Gm]$ the universal $\PP^1$ bundle over $B\Gm$.  An $S$-point of $\sP$ is a tuple $(U, V, L, z, w)$ where $U$ and $V$ are open subsets of $S$ that together form a cover, $L$ is a line bundle on $S$, and $z \in \Gamma(U, L)$ and $w \in \Gamma(V, L^\vee)$ are sections such that $z \rest{U \cap V} w \rest{U \cap V} = 1$.  The images in $\sP$ of $0$ and $\infty$ from $\bP^1$ are divisors, denoted $\sD_+$ and $\sD_-$ respectively, with $\sD_+$ equal to the vanishing locus of $z$ (which is contained inside $U$) and $\sD_-$ equal to the vanishing locus of $w$ (and contained inside $V$).  The line bundle $L$ gives the projection $\sP \rightarrow \BGm$.

An $S$-point of the stack $\abrub{g}{\alpha}{\beta}^{rt}$ is given by a commutative diagram
\begin{equation} \label{eqn:2} \xymatrix{
    C \ar[r] \ar[d] & \tsP \ar[d] \\
    S \ar[r] & \tsT 
  }
\end{equation}
where $\tsP$ is the universal curve over $\tsT$.  The family $C / S$ is a family of rational tails curves, the diagram is predeformable with finite automorphism group, and the order of contact of $C$ along the marked sections of $\tsP$ over $\tsT$ coincides with the partitions $\alpha$ and $\beta$.  The fibers of $C\to S$ have a marking for each of the parts of the partitions $\alpha$ and $\beta$, corresponding to the points of $C$ in the pre-images of $0$ and $\infty$ respectively.  The map $C\to \tsP$ restricts on fibers to relative stable maps to a rubber $\PP^1$.    This description of $\abrub{g}{\alpha}{\beta}^{rt}$ gives an identification 
\[
\coprod_{g,\alpha,\beta} \abrub{g}{\alpha}{\beta}^{rt} = \oM_{\rm rel}(\sP / \BGm)
\]  
of the disjoint union over the discrete data with the moduli space of relative stable maps from rational tails curves into the fibers of $\sP\to B\Gm$ (as considered in \cite{AF,ACW10}).

Let $\oM_{\rel}(\sP)$ be the moduli stack of commutative diagrams
\begin{equation} \label{eqn:1} \xymatrix{
    C \ar[r] \ar[d] & \tsP \ar[d] \\
    S \ar[r] & \sT^2
  }
\end{equation}
that are  predeformable as above, with contact order and marked points again determined by a choice of partitions $\alpha$ and $\beta$.  A family $C\to S$ of rational tails curves comes with a distinguished genus $g$ component in the fibers.  Contracting the rational tails in the fibers determines a map $\pi:C\to \oC$ to a family of smooth genus $g$ curves.  

%
\subsection{The Costello diagram} \label{sec:cartesian}
It is enough for us to work with the relative Jacobian $J=\coprod J_{g,T}$ over the moduli space of smooth curves.  Again, let $Z$ be the zero section.

\begin{thm}\label{thm:cart}
  There is a cartesian square
  \begin{equation}\label{diag:costelloagain} \xymatrix{
      \oM_{\rm rel}(\sP / \BGm) \ar[r] \ar[d] & Z \ar[d] \\
      \oM_{\rm rel}(\sP) \ar[r] & J .
    }
  \end{equation}
\end{thm}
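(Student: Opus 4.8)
The plan is to construct the map $\oM_{\rm rel}(\sP/\BGm) \to \oM_{\rm rel}(\sP) \times_J Z$ explicitly and then exhibit an inverse, working functorially over an arbitrary base scheme $S$. First I would unwind what an $S$-point of the fiber product is: it consists of a relative stable map $C \to \tsP$ over $\sT^2$ (an object of $\oM_{\rm rel}(\sP)$) together with a trivialization of the associated line bundle in $J$, i.e. an identification of $\pi_*$-data exhibiting $\pi^*\sO_{\oC}(\sum \alpha_i p_i^+ - \sum \beta_j p_j^-)$ — or more precisely the line bundle on $\oC$ determined by the map to $\sP$ via pullback of $\sD_+ - \sD_-$ — as the trivial bundle. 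The content of the theorem is that this trivialization datum is exactly equivalent, after using Proposition~\ref{prop:1}, to the $\BGm$-rigidification datum that distinguishes $\tsT$ from $\sT^2$.

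The key steps, in order: (1) Given an $S$-point of $\oM_{\rm rel}(\sP/\BGm)$, i.e. a diagram $C \to \tsP$ over $\tsT \simeq \sT^2 \times \BGm$, compose with the projection $\tsT \to \sT^2$ to get a relative stable map to $\sP$ and hence a point of $\oM_{\rm rel}(\sP)$; show the resulting line bundle on the genus $g$ curve $\oC$, namely the pullback of $\sO_{\sP}(\sD_+ - \sD_-)$ (or equivalently the degree-zero twist of $\sO_{\oC}(\sum\alpha_i\pi(p_i^+)-\sum\beta_j\pi(p_j^-))$ determined by the expansion), is canonically trivialized because the rubber structure provides a $\Gm$-torsor $Q$ embedded in $P$ whose associated line bundle pulls back to precisely this bundle. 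This produces the map to $Z$ lying over the map to $J$. (2) Conversely, given a relative stable map $C \to \tsP$ over $\sT^2$ together with a trivialization of the pulled-back line bundle on $\oC$, use the trivialization to build a $\Gm$-torsor $Q$ over $S$ together with an equivariant embedding $Q \hookrightarrow P$ into the distinguished component of the target chain — this is where one runs Proposition~\ref{prop:1} in reverse, recovering the object of $\tsT$ from an object of $\sT^2$ plus a $\BGm$-worth of data — and check that the original map $C \to \tsP$ refines to a map over $\tsT$. (3) Verify these two constructions are mutually inverse and natural in $S$, and check compatibility with the other arrows in the square (the map to $\oM_{\rm rel}(\sP)$ is visibly the structural one, and the map to $Z$ composed with $Z \hookrightarrow J$ agrees with the composite through $\oM_{\rm rel}(\sP)$). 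One should also confirm predeformability, finite automorphisms, and the rational-tails condition are preserved in both directions, but these are inherited.

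The main obstacle I expect is Step (1)–(2): precisely identifying the line bundle on $\oC$ attached to a point of $\oM_{\rm rel}(\sP)$ with the one whose triviality is being imposed, and matching its trivialization with the torsor datum $Q$. The subtlety is that a relative stable map to an \emph{expanded} target $\sP$ does not directly give a map $\oC \to \bP^1$; rather, contracting rational tails and remembering the predeformable structure along $\sD_+, \sD_-$, the distinguished component of the expansion maps to the original $\bP^1$-bundle, and one must track how the bubbled components contribute so that the degree on $\oC$ comes out as $\sum\alpha_i\pi(p_i^+) - \sum\beta_j\pi(p_j^-)$ exactly (this is the source of the sign in $Q_{g,T}(\alpha,-\beta)$ in Theorem~\ref{thm:compare}). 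Once the dictionary ``rubber structure $\leftrightarrow$ trivialized line bundle'' is set up cleanly via Proposition~\ref{prop:1} and the description of $\tsT$ in terms of a $\Gm$-torsor inside $P$, the cartesianness is a formal consequence; I would isolate this dictionary as the technical heart of the argument and treat the rest as bookkeeping.
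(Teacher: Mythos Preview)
Your outline is structurally the same as the paper's: use Proposition~\ref{prop:1} to write $\tsT \simeq \sT^2 \times \BGm$, and argue that the extra $\BGm$ datum on an $S$-point of $\oM_{\rel}(\sP)$ is precisely the datum of a trivialization of the relevant line bundle on $\oC$, i.e.\ a lift to $Z$. The paper likewise breaks the proof into ``commutativity'' (your Step~(1)) and ``cartesian'' (your Step~(2)), and the cartesian step is indeed formal once the dictionary is in place.

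Where your sketch falls short is in how you propose to establish the dictionary itself. You suggest tracking how the bubbled components contribute so that the degree on $\oC$ comes out right. That is a fiberwise argument: it shows that over each geometric point of $S$ the two line bundles $\pi^\ast M$ and $L$ (in the paper's notation, $M = \cO_{\oC}(\sum\alpha_i\pi(p_i) - \sum\beta_j\pi(q_j))$ and $L$ the bundle on $C$ coming from $C \to \sP \to \BGm$) have the same multidegree, hence are isomorphic. But two line bundles on a family $C/S$ that are fiberwise isomorphic need not be isomorphic as bundles over $S$, and it is the family statement you need for the square to commute functorially. The paper handles this by a deformation argument: the isomorphism $\pi^\ast M \cong L$ is clear over a closed point; it then propagates to every square-zero thickening because deformations of both bundles are classified by $H^1(\oC,\cO_{\oC}) \cong H^1(C,\cO_C)$ (via \cite[Lemma~3.1.7]{ACW10}); Grothendieck's existence theorem then gives agreement on formal completions, and a closed-plus-stable-under-generization argument finishes the job for arbitrary $S$. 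Your degree-counting plan would not produce this, so you should replace that part of the outline with the deformation-theoretic step.
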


This is Diagram (\ref{diag:costello}).  The top horizontal arrow is given by sending a square
\[
 \xymatrix{
    C \ar[r] \ar[d] & \tsP \ar[d] \\
    S \ar[r] & \tsT }
\]
to the stabilization $\oC$ of the marked curve $C$ (recall that $Z \cong \sM_{g,T}$).  Denote by $\{p_i\}$ and $\{q_j\}$ the marked points on $C$ determined by the parts of $\alpha$ and $\beta$ respectively.  The bottom arrow sends a diagram~\eqref{eqn:1} to the pair 
 \[
\displaystyle \biggl(\oC, \cO_{\oC}\Bigl(\sum_i \alpha_i \pi(p_i)-\sum_j \beta_j\pi(q_j)\Bigr)\biggr)
 \]  
where $\oC$ again denotes the stabilization of the marked curve $C$.  This is just the image through the section $\sigma_{(\alpha,-\beta)}$ of Section~\ref{sec:polynomiality}.  The left vertical arrow is given by composition with $\tsT \rightarrow \sT^2$.  The right vertical one is the obvious inclusion.  We now show the diagram in question is cartesian.\\

\noindent\emph{Proof of Theorem \ref{thm:cart}}.

\textbf{The diagram is commutative.}  The composition of the left vertical arrow and the lower horizonatal arrow gives $\oC$ with the line bundle
\[
M := \cO_{\oC}\Bigl(\sum_i \alpha_i \pi(p_i)-\sum_j \beta_j\pi(q_j)\Bigr).
\] 
The condition that this object lie in $Z$ is that $M$ be pulled back from $S$.  To demonstrate this, we will actually identify which line bundle $M$ pulls back from.

There is a commutative diagram
\begin{equation*} \xymatrix{
    C \ar[r] \ar[d] & \tsP \ar[r] \ar[d] & \sP \ar[d] \\
    S \ar[r] & \tsT \ar[r] & \BGm .
  }
\end{equation*}
The map $S \rightarrow \BGm$ gives a line bundle $\oQ$ on $S$ (the completion of the torsor $Q$ associated to the map $S \rightarrow \tsT$), which pulls back to the line bundle $L$ on $C$ associated to the map $C \rightarrow \sP$.  To show that $M$ is pulled back from $S$, it is enough to show that $\pi^\ast M$ is isomorphic to $L$.  This is because for any line bundle $F$ on $\oC$ we have a canonical isomorphism $\pi_\ast \pi^\ast F = F$.

We prove that $\pi^\ast M \cong L$ by a deformation theory argument.  If $S$ is a point, it is obvious, since both are line bundles on $C$ that have degree zero on every component and they agree on the central component.  Assume now it is true over some infinitesimal extension $S$ of a point $S_0$, and that $S'$ is a square-zero extension of $S$ by $\cO_{S_0}$ (and we have compatible data~\eqref{eqn:2} appropriately decorated).  Then it is true over $S'$, since isomorphism classes of deformations of the line bundles $M$ and $L$ are classified by $H^1(\oC, \cO_{\oC})$ and $H^1(C, \cO_C)$ respectively.  By \cite[Lemma~3.1.7]{ACW10}, these are isomorphic via the natural map, so an isomorphism between $\pi^\ast M$ and $L$ can be extended to an isomorphism between $\pi^\ast M'$ and $L'$.

This proves that $\pi^\ast M$ and $L$ agree in a formal neighborhood of every point of a general $S$.  To show they agree on all of $S$ we can assume, since the moduli problem is locally of finite presentation, that $S$ is Noetherian.  Then by Grothendieck's existence theorem, $\pi^\ast M$ and $L$ must agree on the formal completion of $S$ at any point.  This implies that the locus where the two line bundles agree is stable under generization.  On the other hand, the locus where they agree is the pullback of the zero locus of the relative Jacobian, thus is also closed.  Since this locus also includes all of the points of $S$, it must be $S$ itself.

\textbf{The diagram is cartesian.}  An object of the fiber product $\oM_{\rel}(\sP)\fp_J Z$ consists of a diagram \eqref{eqn:1}  such that the line bundle $L$ on $\oC$ (defined above) is pulled back from $S$.  To lift this to a point of $\oM_{\rel}(\sP / \BGm)$, we need to factor the map $S \rightarrow \sT^2$ through $\tsT = \sT^2 \times \BGm$ so that Diagram~\eqref{eqn:2} commutes.  This means we have to find a map $S \rightarrow \BGm$ so that the compositions $C \rightarrow \tsP \rightarrow \sP \rightarrow \BGm$ and $C \rightarrow S \rightarrow \tsT \rightarrow \BGm$ agree.  But the first of these is $\pi^\ast M$ and the second is $L$, which we just saw are isomorphic.  Since $L$ is pulled back from $S$, so is $M$.  \qed

%
\subsection{Proof of the comparison}\label{sec:costello}

To prove our comparison theorem, we reduce the problem to an application of \cite[Thm.~5.0.1]{C06}.  Following our proof, the rest of this section and Section \ref{sec:OT} are devoted to ensuring the relevant hypotheses are met. 

\begin{proof}[Proof of Theorem \ref{thm:compare}.]
  Denote by $\fM=\coprod_{g,T} \fM_{g,T}$ the Artin stack of Deligne-Mumford pre-stable curves.  Let $\fM^\ast$ be the stack of \emph{stable} rational tails curves with disjoint marked points, weighted by integers $k_1, \ldots, k_T$, such that $\sum k_i = 0$.  The map $\sigma$ from Section \ref{sec:polynomiality} determines a map $\oM_{\rm rel}(\sP)\to\fM^\ast$.  The bottom horizontal map in (\ref{diag:costelloagain}) factors through $\fM^{\ast}$, giving a diagram
  \[
  \xymatrix{
    \oM_{\rm rel}(\sP / \BGm) \ar[r]^-\mu \ar[d] & \fZ \ar[r] \ar[d]& Z \ar[d] \\
    \oM_{\rm rel}(\sP) \ar[r] & \fM^{\ast} \ar[r] & J .
  }
  \]
  where $\fZ = Z \fp_J \fM^{\ast}$ is the pullback of $Z$ to $\fM^{\ast}$ and both squares are cartesian.  We equip $\fZ$ with the relative obstruction theory pulled back from that of $Z$ over $J$.    

  We will show in Corollary~\ref{cor:absolutevirclass} that the absolute virtual class for $\oM_{\rel}(\sP/\BGm)$ coincides with the virtual class relative to $\oM_{\rel}(\sP)$:
  \begin{equation*}
   \Big[ \oM_{\rel}(\sP/\BGm) \Big/ \oM_{\rel}(\sP) \Bigr]^\vir = \big[\oM_{\rel}(\sP/\BGm)\big]^\vir .
  \end{equation*}
  On the other hand Costello's Theorem \cite[Theorem~5.0.1]{C06}, applied to the left square above, tells us that
  \begin{equation*}
    \mu_\ast \Big[\oM_{\rel}(\sP / \BGm) \Big/ \oM_{\rel}(\sP) \Big]^\vir = [\fZ / \fM^\ast ]^\vir .
  \end{equation*}
\end{proof}

We are left to check the hypotheses of Costello's theorem and to prove Corollary~\ref{cor:absolutevirclass}.  In our situation, the hypotheses of Costello's theorem are (with bullets in order corresponding to the bullets of Costello's statement):
\begin{itemize}
\item $\oM_{\rm rel}(\sP / \BGm)$ and $\fZ$ are DM stacks:  immediate from the definitions;
\item $\oM_{\rel}(\sP)$ and $\fM^\ast$ are Artin stacks of the same pure dimension, and the bottom horizontal morphism is of DM type of pure degree 1:  in fact they are both Deligne--Mumford stacks by definition and the degree verification is done in Proposition~\ref{DMtype};
\item the top horizontal map is proper:  immediate from the properness of $\oM_{\rel}(\sP/\BGm)$;
\item the obstruction theories for the vertical maps agree:  we ouline our approach in the statement of Proposition~\ref{prop:gysin}, which will be demonstrated in Section~\ref{sec:OT}.
\end{itemize}

\begin{prop} \label{DMtype}
  The map $\oM_{\rel} (\sP) \rightarrow \fM^\ast$ induces an isomorphism on dense open substacks of source and target.
\end{prop}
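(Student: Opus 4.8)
The plan is to pin down the generic behaviour of $\oM_{\rel}(\sP)\to\fM^\ast$ by describing its fibres over the smooth locus. Write $\sM_{g,T}\subset\fM^\ast$ for the open substack of smooth $T$-pointed genus $g$ curves, carrying the weights $(\alpha,-\beta)$; it is dense, since every rational tails curve is a limit of smooth curves. Let $W:=\oM_{\rel}(\sP)\fp_{\fM^\ast}\sM_{g,T}$, an open substack of $\oM_{\rel}(\sP)$; over it the contraction $\pi\colon C\to\oC$ is the identity. I would prove the proposition by showing that $W\to\sM_{g,T}$ is an isomorphism, together with the density of $W$ in $\oM_{\rel}(\sP)$; the latter follows from a standard smoothing argument, using the fact (established below) that a relative stable map to $\sP$ with smooth source exists and is unique, so that every relative stable map degenerates from one lying over a generic point of $\fM^\ast$.

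The first step is to show that over an irreducible source curve the target must be unexpanded, so that $W$ is simply the stack of relative stable maps $C\to\sP$ with contact orders $\alpha$ along $\sD_+$ and $\beta$ along $\sD_-$. The image of an irreducible $C$ lies in a single component $R$ of an expansion of $\sP=[\bP^1/\Gm]$; since $\sD_+$ and $\sD_-$ have positive total degree $d\ge 1$, the prescribed contact forces $R$ to meet both relative divisors, and by the chain structure of expansions of $\sP$ this can only be the distinguished component. But then every remaining component of the expansion would carry the map with degree zero, contradicting stability; hence the expansion is trivial.

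For the second step I would unwind a relative stable map $C\to\sP$ from a smooth irreducible $C$ via the description of $S$-points of $\sP$ as tuples $(U,V,L,z,w)$ from Section~\ref{sec:mapsintofibers}. Here one may take $U$ and $V$ to be the complements of the markings $q_j$ over $\sD_-$ and of the markings $p_i$ over $\sD_+$; the section $z$ then extends to a rational section $s$ of $L$ with divisor exactly $\sum_i\alpha_i p_i-\sum_j\beta_j q_j$, and $w=s^{-1}$. Conversely, any isomorphism $L\cong\sO_C\bigl(\sum_i\alpha_i p_i-\sum_j\beta_j q_j\bigr)$ with its tautological rational section produces such a tuple; any two choices differ by the rescaling of $s$ that is exactly the $\Gm$ quotiented out in $\sP=[\bP^1/\Gm]$, and an automorphism of the relative stable map over a fixed $C$ must fix the nonzero rational section $s$ and so is trivial. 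Thus the fibre groupoid of $W\to\sM_{g,T}$ over each point is a single object with trivial automorphisms, and since the construction $C\mapsto\bigl(\sO_C(\sum_i\alpha_i p_i-\sum_j\beta_j q_j),\,s\bigr)$ works in families it defines a section $\sM_{g,T}\to\oM_{\rel}(\sP)$ of the projection; hence $W\to\sM_{g,T}$ is an isomorphism. This proves the proposition and shows $\oM_{\rel}(\sP)\to\fM^\ast$ has pure degree one.

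I expect the main obstacle to be the first step: extracting from J.~Li's predeformability and stability conditions the fact that no bubbling of the target occurs over an irreducible source curve, and relatedly verifying that the locus where the source is smooth and the target unexpanded is dense in $\oM_{\rel}(\sP)$. Once the target is pinned to $\sP$ itself, the remaining chart computation is routine, its only delicacy being to keep track of the $\Gm$-rigidification carefully enough to conclude that the generic fibre groupoid is genuinely trivial.
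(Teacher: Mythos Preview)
Your fibre computation over smooth curves is correct and matches the paper's in spirit: once the target is unexpanded, a relative map to $\sP=[\bP^1/\Gm]$ is exactly the data of the weighted divisor, and the $\Gm$-quotient kills the rescaling ambiguity. The paper in fact works with the slightly larger open locus $U\subset\oM_{\rel}(\sP)$ where only the \emph{target} is required to be unexpanded (the source may still be nodal rational tails), and observes directly that $U$ is isomorphic to its image in $\fM^\ast$; your $W$ sits inside this $U$.

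The real content, as you correctly anticipate in your last paragraph, is the density of $W$ (or $U$) in $\oM_{\rel}(\sP)$, and here your argument has a genuine gap. Knowing that the fibre over each smooth curve is a single reduced point does \emph{not} imply that every relative stable map is a specialization of such a map: there could a priori be components of $\oM_{\rel}(\sP)$ lying entirely over the boundary of $\fM^\ast$. Ruling this out requires controlling the deformation theory of predeformable maps well enough to smooth both source and target simultaneously, and this is not ``standard''---the predeformability condition is not open, so the usual smoothing arguments for ordinary stable maps do not apply directly.

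The paper handles density by a different route that sidesteps Li's deformation theory entirely. It invokes the orbifold comparison of \cite{AF,ACW10}: the stack $\fM_{\rel}^{\orb}(\sP)$ of transverse maps to root stacks of expansions covers $\oM_{\rel}(\sP)$, and \cite[Proposition~4.2.2]{ACW10} shows $\fM_{\rel}^{\orb}(\sP)$ is \emph{smooth} over the stack of orbifold targets. Since the locus of unexpanded targets is dense in the stack of targets, smoothness pulls density back for free. So the paper trades your direct smoothing claim for an established structural result about the orbifold model; if you want to avoid the orbifold machinery, you would need to supply an honest deformation-theoretic argument that every predeformable map smooths, which is essentially the content of that machinery.
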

\begin{proof}
  Let $U \subset \fM_{\rel}^\ast(\sP)$ be the locus of maps with unexpanded target.  To give such a map is precisely the same as to give a collection of disjoint, weighted sections on the source curve such that the sum of all the weights is zero.  Therefore the map in question induces an isomorphism from $U$ to its image in $\fM^\ast$.

  We must now argue that $U$ is dense in $\fM_{\rel}^\ast(\sP)$.  For this, let $\fM_{\rel}^{\orb}(\sP)$ be the stack of \emph{transverse} orbifold maps to root stacks of expansions of $\sP$, as considered in \cite{AF}.  By \cite[Lemma~3.2.6~(2)]{AF}, the stack $\fM_{\rel}^{\orb}(\sP)$ covers $\fM_{\rel}^\ast(\sP)$.  Therefore it suffices to see that the pre-image $U^{\orb}$ of $U$ in $\fM_{\rel}^{\orb}(\sP)$ is dense.  But the proof of \cite[Proposition~4.2.2]{ACW10} shows that $\fM_{\rel}^{\orb}(\sP)$ is smooth over the stack of orbifold targets.  Since the unexpanded orbifold targets are dense in the stack of all orbifold targets, this implies that $U^{\orb}$ is dense in $\fM_{\rel}^{\orb}(\sP)$.
\end{proof}

\begin{prop} \label{prop:gysin}
  Let $\sigma : Z \rightarrow J$ denote the inclusion.  The virtual class $[\oM_{\rel}(\sP / \BGm)]^\vir$ defined in \cite{GV05} is the Gysin pullback $\sigma^! [\oM_{\rel}(\sP)]$.
\end{prop}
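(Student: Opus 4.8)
The plan is to recognise both sides of the asserted equality as the virtual fundamental class of a single perfect relative obstruction theory for $\oM_{\rel}(\sP/\BGm)$ over $\oM_{\rel}(\sP)$. Start with the right-hand side. The inclusion $\sigma\colon Z\hookrightarrow J$ is the zero section of the relative Jacobian $J\to Z=\sM_{g,T}$; since $\sM_{g,T}$ is smooth and $J\to\sM_{g,T}$ is an abelian scheme, $\sigma$ is a regular embedding of codimension $g$, and its normal bundle $N_{Z/J}$ is the Lie algebra of the Jacobian (canonically $R^{1}$ of the structure sheaf of the universal curve). By Theorem~\ref{thm:cart} the Costello square is cartesian, so pulling the tautological obstruction theory $\mathbb L_{Z/J}\cong N_{Z/J}^{\vee}[1]$ of the lci immersion $\sigma$ back along that square equips $\oM_{\rel}(\sP/\BGm)\to\oM_{\rel}(\sP)$ with a perfect relative obstruction theory $\mathbb E_{\sigma}$. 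By the compatibility of virtual pullback with refined Gysin homomorphisms for regular embeddings --- a standard consequence of the deformation-to-the-normal-cone construction --- the virtual class attached to $\mathbb E_{\sigma}$, formed from the fundamental class of the pure-dimensional stack $\oM_{\rel}(\sP)$, is exactly $\sigma^{!}[\oM_{\rel}(\sP)]$. Hence it is enough to show that J.\ Li's virtual class $\virclass{\oM_{\rel}(\sP/\BGm)}$ is also the virtual class of $\mathbb E_{\sigma}$.

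To obtain this, first invoke Corollary~\ref{cor:absolutevirclass} (proved in Section~\ref{sec:OT}): the absolute virtual class $\virclass{\oM_{\rel}(\sP/\BGm)}$ agrees with the virtual class of $\oM_{\rel}(\sP/\BGm)$ relative to $\oM_{\rel}(\sP)$ for J.\ Li's relative obstruction theory $\mathbb E_{\mathrm{Li}}$. We are thus reduced to producing a compatible identification $\mathbb E_{\mathrm{Li}}\simeq\mathbb E_{\sigma}$, and here the site of Section~\ref{sec:site} does the work. Over $\oM_{\rel}(\sP)$ the source curve, the expansion of $\sP$, and the predeformable map to $\sP$ are all already fixed; by Proposition~\ref{prop:1} ($\tsT\simeq\sT^{2}\times\BGm$) the only remaining datum of an object of $\oM_{\rel}(\sP/\BGm)$ is the $\Gm$-rigidification, which by the line-bundle computation in the proof of Theorem~\ref{thm:cart} amounts to a trivialisation of $M=\sO_{\oC}\bigl(\sum_{i}\alpha_{i}\pi(p_{i})-\sum_{j}\beta_{j}\pi(q_{j})\bigr)$ compatible with the canonical isomorphism $\pi^{\ast}M\cong L$. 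Deforming such a trivialisation is a locally trivial problem on the site, so (by the principle of the forthcoming~\cite{obs}) it carries a canonical obstruction theory governed by the cohomology of $\sO_{\oC}$, equivalently by the deformation theory of line bundles on $\oC$ --- and this is precisely the pullback of $\mathbb L_{Z/J}\cong N_{Z/J}^{\vee}[1]$, the obstruction theory of the zero section of the relative Jacobian. Matching the resulting maps to $\mathbb L_{\oM_{\rel}(\sP/\BGm)/\oM_{\rel}(\sP)}$ identifies $\mathbb E_{\mathrm{Li}}$ with $\mathbb E_{\sigma}$ and finishes the proof.

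The main obstacle is exactly this last identification, which is why the site-theoretic reinterpretation is needed. It does not suffice to know that $H^{1}(C,\sO_{C})\cong H^{1}(\oC,\sO_{\oC})$ (via \cite[Lemma~3.1.7]{ACW10}, as already used for Theorem~\ref{thm:cart}): one must recast J.\ Li's definition~\cite{L02} of the virtual class for relative stable maps --- and, since the literature treats only rigid targets and not the rubber case, first construct the rubber version at all --- in a form where ``deforming the rubber map over $\oM_{\rel}(\sP)$'' is visibly the torsor-theoretic problem above, and then check that the induced obstruction-theory map is the correct one, not merely an abstract quasi-isomorphism. Making the rubber case and the rational-tails hypothesis interact cleanly with Li's machinery is what the site of Section~\ref{sec:site} is built to do, and it is the technical heart of the argument carried out in Section~\ref{sec:OT}.
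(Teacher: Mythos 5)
Your proposal is correct and follows essentially the same route as the paper's proof, which is distributed across Section~\ref{sec:OT}: reduce via Corollary~\ref{cor:absolutevirclass} to identifying the relative obstruction theory over $\oM_{\rel}(\sP)$ with the pullback of $N_{Z/J}$, then carry out that identification on the site of Section~\ref{sec:site} (this is Proposition~\ref{prop:ext} and the discussion immediately following it), including the final check that the obstruction \emph{maps} --- not merely the underlying vector bundle $\bE^\vee$ --- coincide. The only inaccuracies are cosmetic: what you call ``$\mathbb{E}_{\mathrm{Li}}$'' is the derived kernel $\fE''$ of $\fE\to\fE'$ rather than Li's obstruction theory itself (which lives over $\fM$, not $\oM_{\rel}(\sP)$), and your phrasing of the $\Gm$-rigidification as ``a trivialisation of $M$'' is loose shorthand for the datum of a line bundle $\oQ$ on $S$ with $p^{\ast}\oQ\cong L$; neither affects the argument.
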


The proof of this proposition will be given in Section~\ref{sec:OT}.  We note that $\sigma^! [\oM_{\rel}(\sP)]$ is the relative virtual class for $\oM_{\rel}(\sP/\BGm)$ associated to the relative obstruction theory over $\oM_{\rel}(\sP)$ that is pulled back from that of $Z$ in $J$.

\section{The obstruction theories}\label{sec:OT}

We will define and compare natural relative obstruction theories for the morphisms $Z \rightarrow J$ and $\oM_{\rel}(\sP / \BGm) \rightarrow \oM_{\rel}(\sP)$ of Diagram~(\ref{diag:costelloagain}).  Since the diagram is cartesian, any relative obstruction theory for the former morphism induces a relative obstruction theory for the latter.  This gives us two obstruction theories controlling the relative deformation theory of $\oM_{\rel}(\sP/\BGm)$ over $\oM_{\rel}(\sP)$.  The object of this section will be to show that these obstruction theories coincide.

We begin by explaining what we mean by an obstruction theory in Section~\ref{sec:obs}.  We have elected to use a definition that is close in spirit to that of~\cite{LT}, but incorporates some of the stack-theoretic techniques of~\cite{BF}.  This notion of an obstruction theory will be studied in detail in the forthcoming paper~\cite{obs}.  In the case of a perfect obstruction theory, the definition presented here is essentially equivalent to the one given in~\cite{BF}.

As in~\cite{BF}, the virtual fundamental class is obtained by intersecting the intrinsic normal cone with the zero section in a vector bundle stack associated to the obstruction theory.  However, unlike the obstruction theories considered in~\cite{BF}, the obstruction theories for relative stable maps introduced by J.\ Li do not obviously arise as $\Ext(\bE, -)$ for a complex $\fE$ of quasi-coherent sheaves on a scheme.  Although such a complex does exist a posteori, we do not know how to describe it directly.  This would make it difficult to verify the axioms of an obstruction theory using the Behrend--Fantechi formalism, and is the reason we have preferred the definition introduced below.

The difficulty of the obstruction theory is ultimately due to the predeformability condition, which is not open and therefore precludes standard deformation theoretic techniques.  In \cite{L02}, J.\ Li constructed his obstruction theory by measuring the failure of local deformations to glue using a modified \v{C}ech procedure.  We will reinterpret Li's obstruction groups as the groups of torsors under abelian group stacks on a suitable site, defined in Section~\ref{sec:site}.  This reinterpretation brought to light what appears to be a small omission in Li's original definition, so we have verified in detail in Appendix~\ref{app:obs} that our definition do give perfect obstruction theories.

In order to describe our obstruction theories, we will have to work systematically with abelian group stacks (or ``Picard stacks'' in the parlance of \cite[XVIII.1.4]{sga4-3}).  Verifying the axioms of an abelian group stack is tedious, though, and anyone who has done it once will shudder at the prospect of doing so repeatedly for the multiple abelian group stacks that appear in this paper.  Fortunately we have been able to rely on an elegant device due to Grothendieck~\cite{Gr} to avoid verifying the axioms directly:  we realize our abelian group stacks as fibers of additively cofibered categories, the fibers of which are always abelian group stacks (cf.\ \cite[Section~1.4]{Gr}).  

In Section~\ref{sec:unobs}, we describe the canonical obstruction theory associated to a smooth morphism.  This section is not used directly in the rest of the paper, but is meant to motivate the methods used to construct the obstruction theories considered in the rest of the section.  The principle is that whenever a deformation problem is locally trivial, it has a canonical obstruction theory coming from torsors under its tangent sheaf, which is a system of additively cofibered categories capturing essentially the same information as the cotangent bundle in the case of a smooth scheme.

In Section~\ref{sec:site}, we introduce the site in which deformations of relative stable maps become locally trivial in the present context.  Li demonstrates in \cite{L02} that the deformation theory of a relative stable map, relative to that of the source curve, is trivial \emph{\'etale locally on the source curve}.  Therefore one might expect that the deformation theory of a relative stable map is a local problem on the source curve, as it is for stable maps (see \cite{obs}).  However, a relative stable map is really a map into the \emph{fibers of a family} (to wit, the universal expansion $\tsP$ over the base $\tsT$ in the case considered here):  it contains the additional information of a map from the base of the family of source curves to the base of the family of targets.  To take this into account, our site combines the \'etale topologies of the base and total space of the family of curves.

In Section~\ref{sec:obs-rub}, we define the obstruction theories for $\oM_{\rel}(\sP/\BGm)$ and $\oM_{\rel}(\sP)$ relative to the stack of pre-stable curves.  Morally, these obstruction theories arise because these moduli problems can be extended to the site defined in Section~\ref{sec:site}, and they become formally smooth in that setting.  By the principle of local unobstructedness, we obtain canonical obstruction theories for the moduli problems that are necessarily compatible.  By comparing them, we obtain a relative obstruction theory for $\oM_{\rel}(\sP/\BGm)$ over $\oM_{\rel}(\sP)$.

Although the obstruction theories for $\oM_{\rel}(\sP/\BGm)$ and $\oM_{\rel}(\sP)$ over the moduli space of pre-stable curves are complicated and difficult to understand explicitly, their difference is much simpler.  We will find in Section~\ref{sec:leftarrow} that it is visibly the same as the obstruction theory pulled back from the normal bundle of $Z$ in $J$, supplying the final hypothesis of Costello's theorem and permitting us to conclude that 
\begin{equation*}
  \mu_\ast \Big[\oM_{\rel}(\sP/\BGm) \Big/ \fM_{\rel}^\ast(\sP)\Big]^{\vir} = [\fZ / \fM^\ast]^{\vir} ,
\end{equation*}
and completing the proof of Theorem~\ref{thm:compare}.

\begin{remark}
  The methods of this section are valid without the restriction to rational tails curves:  one may take $\oM_{\rel}(\sP/\BGm)$ and $\oM_{\rel}(\sP)$ to be the corresponding moduli spaces in which the source curves are allowed to be pre-stable.
\end{remark}

%
\subsection{Obstruction theories in general}
\label{sec:obs}

If $S$ is a scheme over an algebraic stack $X$ and $J$ is a quasi-coherent sheaf on $S$, let $\Def_X(S, J)$ be the category of square-zero extensions of $S$ by $J$ over $X$.  Objects of $\Def_X(S,J)$ are therefore diagrams
\begin{equation*} \xymatrix{
    S \ar[r] \ar[d] & S' \ar[dl] \\
    X 
  }
\end{equation*}
where $S'$ is a square-zero extension of $S$ with ideal $I_{S/S'} = J$.  This category has the following functoriality properties:
\begin{enumerate}
\item contravariance with \'etale morphisms in $S$:  if $f : S_1 \rightarrow S_2$ is \'etale and $J$ is a quasi-coherent sheaf on $S_2$ then there is a functor $\Def_X(S_2, J) \rightarrow \Def_X(S_1, f^\ast J)$;
\item covariance with $J$:  if $J_1 \rightarrow J_2$ is a morphism of quasi-coherent sheaves on $S$, there is a functor $\Def_X(S, J_1) \rightarrow \Def_X(S,J_2)$,
\item covariance with affine morphisms in $S$:  if $f : S_1 \rightarrow S_2$ is affine and $J$ is a quasi-coherent sheaf on $S_1$, there is a functor $\Def_X(S_1, J) \rightarrow \Def_X(S_2, f_\ast J)$.
\end{enumerate}
These functors are all compatible with composition in the usual sense (see \cite{obs} for details).
We note that the fibered category over the \'etale site of $S$ determined by $\Def_X$ is a stack and that for $S$ fixed, $\Def_X(S, J)$ is additively cofibered \cite[D\'efinition~1.2]{Gr} in the variable $J$.  This latter fact implies that $\Def_X(S,J)$ has the structure of a $\Gamma(S, \cO_S)$-$2$-module (the analogue for $\cO_S$-modules of what is called a ``Picard category'' in \cite[Expos\'e~XVIII~1.4]{sga4-3}) for all $S$ and $J$ (see \cite{gs} for details).  Only the abelian $2$-group structure will be relevant for us here, and for this one may refer to \cite[Section~1.4]{Gr}.

A representable morphism $X \rightarrow Y$ of algebraic stacks induces a faithful map $\Def_X(S,J) \rightarrow \Def_Y(S,J)$ for each $S$ and $J$.

\begin{defn}
  An \emph{obstruction theory} for $X$ over $Y$ is a collection of groupoids $\fE(S,J)$ for every scheme $S$ over $X$ and every quasi-coherent sheaf $J$ on $S$, such that:
\renewcommand{\labelenumi}{(\roman{enumi})}
\begin{enumerate}
\item $\fE(S,J)$ varies contravariantly with $S$ and covariantly with $J$;
\item $\fE(S,J)$ varies covariantly with affine morphisms in $S$;
\item $\fE$ is a stack on the big \'etale site of  $X$;
\item for $S$ fixed, $\fE(S,J)$ is additively cofibered and left exact in the $J$ variable;
\item there are given cartesian diagrams
  \begin{equation} \label{eqn:14} \xymatrix{
      \Def_X(S,J) \ar[r] \ar[d] & e(S,J) \ar[d] \\
      \Def_Y(S,J) \ar[r] & \fE(S,J)
    }
  \end{equation}
  where $e(S,J)$ is the zero object of the $2$-group $\fE(S,J)$;
\item the maps in Diagram~\eqref{eqn:14} are compatible with the \'etale contravariance in $S$, the affine covariance in $S$, and the covariance in $J$.
\end{enumerate}
\end{defn}
We limit ourselves to several remarks about this definition here and refer the reader to \cite{obs} for further details.  

\begin{rem}
  This definition is intermediate between those of \cite[Definition~4.4]{BF} and \cite[Definition~1.2]{LT}:  any obstruction theory in the sense of Behrend--Fantechi gives rise to one as above, which in turn gives rise to an obstruction theory in the sense of Li--Tian.  The composition of these proceses is the same as the one described in \cite[Section~3]{KKP} to produce a Li--Tian obstruction theory from a Behrend--Fantechi obstruction theory.
\end{rem}

\begin{rem}
  The diagram~\eqref{eqn:14} says roughly that associated to any square-zero extension $S'$ of $S$ over $Y$ (an object of $\Def_Y(S,J)$) there is an obstruction $\omega$ in $\fE(S,J)$ such that a lift of the diagram
\begin{equation*} \xymatrix{
    S \ar[r] \ar[d] & X \ar[d] \\
    S' \ar@{-->}[ur] \ar[r] & Y
  }
\end{equation*}
exists if and only if $\omega$ is isomorphic to the zero section $e(S,J)$.  Furthermore, the set of all such lifts is precisely the set of isomorphisms between $\omega$ and $e(S,J)$.
\end{rem}

\begin{rem}
Throughout, we will define various collections of categories and stacks depending on a scheme $S$ and a quasi-coherent sheaf $J$ on $S$, and satisfying various functoriality properties.  Although these objects will not generally be stacks (since they do not form fibered categories in the $S$ variable), it will be possible to obtain stacks by restriction to the small \'etale site of any given scheme $S$.  If $F$ is one of these objects, we will write $\uF(S, J)$ for the stack on the small \'etale site of $S$ whose value on $U$ is $F(U, J_U)$.

When $S$ and $J$ remain fixed and indicating the dependence on $S$ and $J$ seems more cumbersome than omitting it seems confusing, we will permit ourselves to write $\uF$ in place of $\uF(S,J)$.  
\end{rem}


%
\subsubsection{The virtual fundamental class}  Recall \cite[Definition~3.6 and Section~7]{BF} that the relative intrinsic normal sheaf $\fN_{X/Y}$ is the associated abelian cone stack $\ch(\bL_{X/Y}^\vee[1])$ of the dual of the relative cotangent complex of $X$ over $Y$.  Here, $\ch$ is Deligne's ``champ construction'' (cf.\ \cite[XVIII.1.4.11]{sga4-3} or \cite[Section~2]{BF}, where the notation $h^1 /h^0$ is used instead of $\ch$).   If $\fE$ is an obstruction theory in the sense above, then $S \mapsto \fE(S, \cO_S)$ is an abelian cone stack \cite{gs} that we will abusively denote by the same letter $\fE$.  We will say that $\fE$ is a perfect relative obstruction theory for $X$ over $Y$ if the abelian cone stack described above is a vector bundle stack \cite[Definition~1.9]{BF}.

If $\fE$ is a relative obstruction theory for $X$ over $Y$, there is a canonical embedding of abelian cone stacks $\fN_{X/Y} \rightarrow \fE$ \cite{obs}.  This induces an embedding of the relative intrinsic normal cone $\fC_{X/Y}$ in $\fE$.  If $\fE$ is a perfect relative obstruction theory then we can intersect $\fC_{X/Y}$ with the zero locus of $\fE$ to obtain the relative virtual class associated to this obstruction theory.


%
\subsubsection{Unobstructed morphisms}\label{sec:unobs}  This section will not be used in what follows.  It is provided to give intuition about the definition of an obstruction theory given above.

Consider a morphism $X \rightarrow Y$ and a commutative diagram of solid arrows
  \begin{equation} \label{eqn:18} \xymatrix{
      S \ar[r] \ar[d] & X \ar[d] \\
      S' \ar@{-->}[ur] \ar[r] & Y
    }
  \end{equation}
  in which $S'$ is a square-zero extension of $S$ with ideal $J$.  Let $T_{X/Y}(S,J)$ be the collection of completions of the diagram
  \begin{equation*} \xymatrix{
      S \ar[r] \ar[d] & X \ar[d] \\
      S[J] \ar[r]^0 \ar@{-->}[ur] & Y
    }
  \end{equation*}
  in which $S[J]$ is the trivial square-zero extension of $S$ by $J$ over $Y$ and the map $0 : S[J] \rightarrow Y$ is the zero tangent vector, i.e. the unique extension of $S \rightarrow Y$ through $S[J]$ that factors through the canonical retraction $S[J] \rightarrow S$.  If $X$ is of Deligne--Mumford type (resp.\ Artin type) over $Y$ then $T_{X/Y}(S,J)$ forms a $\Gamma(S, \cO_S)$-module (resp.\ a $\Gamma(S, \cO_S)$-$2$-module).  For $U$ \'etale over $S$, the assignment $U \mapsto T_{X/Y}(U,J_U)$ defines an abelian group stack $\uT_{X/Y}(S,J)$ on $S$.

  The letter $T$ is supposed to suggest the tangent bundle, which is justified by the equality $T_{X/Y}(S, J) = \Gamma(S, f^\ast T_{X/Y} \tensor_{\cO_S} J)$.  By Yoneda's lemma applied to the relative cotangent bundle, the system of modules (or $2$-modules) $T_{X/Y}(S,J)$ contains the same information as the relative tangent bundle itself.

  The dashed arrows completing Diagram~\eqref{eqn:18} form a pseudo-torsor under $T_{X/Y}(S,J)$ (this holds even under much weaker assumptions on $X$ and $Y$).  This is a consequence of the fact that algebraic stacks respect pushouts of infinitesimal extensions of schemes; the reader may find more details about this in \cite{obs}.  If $X$ is assumed to be smooth over $Y$ then, by the formal criterion of smoothness, this psuedo-torsor is a torsor under $\uT_{X/Y}(S,J)$.  The sections of this torsor are precisely the lifts of the diagram.
  
  If we define $\fE(S,J)$ to be the category of torsors under the sheaf of abelian groups (or stack of abelian $2$-groups) $\uT_{X/Y}(S,J)$ then $\fE(S,J)$ is a relative obstruction theory for $X$ over $Y$.

\begin{defn}
  The obstruction theory described above will be called the \emph{canonical relative obstruction theory} for $X$ over $Y$.  If $X$ is smooth over $Y$ and the relative obstruction theory is the canonical one, we say that $X$ is \emph{unobstructed} over $Y$.
\end{defn}
%
\subsubsection{Compatible obstruction theories} \label{sec:compatible}

Suppose $X \xrightarrow{u} Y \xrightarrow{v} Z$ is a sequence of morphisms of algebraic stacks, and $\fE_{X/Z}$ and $\fE_{Y/Z}$ are relative obstruction theories for $vu$ and $v$, respectively.  Suppose also that we have maps $\fE_{X/Z} \rightarrow u^\ast \fE_{Y/Z}$ and commutative diagrams
\begin{equation*} \xymatrix{
    \Def_X(S,J) \ar[dr] \ar[r] \ar[d] &  \Def_Z(S,J) \ar[r] \ar[dr] & \fE_{X/Z}(S,J) \ar[d]  \\
    \Def_Y(S,J) \ar[r] \ar[ur] & e(S,J) \ar[r] \ar[ur] & u^\ast \fE_{Y/Z}(S,J)
  }
\end{equation*}
in which the oblique parallelograms are the cartesian squares associated to the obstruction theories $\fE_{X/Z}$ and $\fE_{Y/Z}$.  Assume that the diagram above is compatible with the variation in $S$ and $J$.  Let $\fE_{X/Y}(S,J)$ be the kernel of $\fE_{X/Z}(S,J) \rightarrow u^\ast \fE_{Y/Z}(S,J)$~\cite{gs}.  Then $\fE_{X/Y}$ is naturally a relative obstruction theory for~$u$~\cite{obs}.  

\begin{defn}\label{def:compatible}
If, in addition, the map $\fE_{X/Z} \rightarrow u^\ast \fE_{Y/Z}$ is surjective as a map of \'etale stacks on $X$, we will say that these obstruction theories are \emph{compatible} and form an exact sequence
\begin{equation*}
  0 \rightarrow \fE_{X/Y} \rightarrow \fE_{X/Z} \rightarrow u^\ast \fE_{Y/Z} \rightarrow 0 .
\end{equation*}
\end{defn}


Suppose now that we have a sequence of compatible perfect obstruction theories as in Definition~\ref{def:compatible} and that $Y$ is locally unobstructed over $Z$, meaning that $Y$ is smooth over $Z$ and its relative obstruction theory is the canonical one.  Recall that by definition, $\fE_{X/Y}$ is the kernel of the map $\fE_{X/Z} \rightarrow u^\ast \fE_{Y/Z}$.  A section of $\fE_{X/Y}(S,J)$ is a pair $(\omega, \phi)$ where $\omega \in \fE_{X/Z}(S,J)$ and $\phi$ is an isomorphism between the image of $\omega$ in $\fE_{Y/Z}(S,J)$ and $e(S,J)$.  The kernel of $\fE_{X/Y} \rightarrow \fE_{X/Z}$ can therefore be identified with the group of automorphisms of the trivial object of $u^\ast \fE_{Y/Z}(S,J)$.  Recall, however, that $u^\ast \fE_{Y/Z}(S,J)$ is the $2$-stack of torsors under $u^\ast T_{Y/Z}(S,J)$, and the automorphism group of the trivial torsor is canonically $u^\ast T_{Y/Z}(S,J)$.  Moreover, the map $\fE_{X/Y} \rightarrow \fE_{X/Z}$ is surjective since any two sections of $u^\ast \fE_{Y/Z}$ are locally equivalent.  We have therefore proved the exactness of the bottom row of
\begin{equation*}
  \xymatrix{
    & u^\ast T_{Y/Z} \ar[r] \ar@{=}[d] & \fC_{X/Y} \ar[r] \ar[d] & \fC_{X/Z} \ar[d] \\
    0 \ar[r] & u^\ast T_{Y/Z} \ar[r] & \fE_{X/Y} \ar[r] & \fE_{X/Z} \ar[r] & 0 . 
  }
\end{equation*}
The compatibility of the obstruction implies the commutativity of the square on the right, in which $\fC_{X/Y}$ and $\fC_{X/Z}$ are the relative intrinsic normal cones \cite{BF}.  The inclusion of $\fC_{X/Y} \subset \fE_{X/Y}$ is equivariant with respect to the action of $T_{Y/Z}$, implying that $\fC_{X/Y}$ is the pre-image in $\fE_{X/Y}$ of $\fC_{X/Z} \subset \fE_{X/Z}$.  Furthermore, since $\fE_{X/Y}$ is a torsor over $\fE_{X/Z}$, the cycles $\fC_{X/Z} \subset \fE_{X/Z}$ and $\fC_{X/Y} \subset \fE_{X/Y}$ determine the same cycle class on $X$.  Therefore the virtual classes on $X$ associated to $\fE_{X/Y}$ and $\fE_{X/Z}$ must coincide.

In fact, \cite[Theorem~4]{Manolache} shows that the same conclusion holds if the hypothesis that $Y$ be unobstructed over $Z$ is replaced with the assumption that $Y$ be lci and of Deligne--Mumford type over $Z$, with the canonical relative obstruction theory.  Indeed, in that case the relative virtual class for $Y \rightarrow Z$ is the fundamental class of $Y$, whose virtual pullback to $X$ is therefore the same as the virtual pullback of the fundamental class of $Z$.

%
\subsection{A site adapted to deformations of curves}
\label{sec:site}

If $p:C \rightarrow S$ is a family Deligne--Mumford pre-stable curves, we can define a site $CS$ whose objects are commutative squares
\begin{equation*} \xymatrix{
    U \ar[r] \ar[d] & C \ar[d]^p \\
    V \ar[r] & S
  }
\end{equation*}
in which the horizontal arrows are \'etale.  For brevity, we sometimes refer to the above object of $CS$ as $U \rightarrow V$ or even just $UV$.  A collection of such diagrams is said to cover $C \rightarrow S$ if the maps $U \rightarrow C$ cover $C$ and the maps $V \rightarrow S$ cover $S$.

This site has a projection $\pi : CS \rightarrow \et(S)$.  The pullback functor $\pi^\ast : \et(S) \rightarrow CS$ sends a scheme $V$ that is \'etale over $S$ to $C \fp_{U} V \rightarrow V$.  There are embeddings $i : \et(C) \rightarrow CS$, with $i^\ast (UV) = U$, and $j : \et(S) \rightarrow CS$ with $j^\ast (UV) = V$.  Covers in $\et(C)$ (resp.\ in $\et(S)$) are all pulled back from covers in $CS$ so $R i_\ast = i_\ast$ (resp.\ $R j_\ast = j_\ast$).  From this it follows that $R \pi_\ast i_\ast = R p_\ast$ and $R \pi_\ast j_\ast = \id$.  We also have $j_\ast = \pi^\ast$, so $R \pi_\ast \pi^\ast = \id$ as well.

\begin{remark}
  The morphism of \'etale sites $\et(C) \rightarrow \et(S)$ induces a fibered site \cite[D\'efinition~VI.7.4.1]{sga4-2} over the category associated to the partially ordered set $\{ 0 \leq 1 \}$.  The site $CS$ defined above is the ``total site'' \cite[VI.7.4.3~3]{sga4-2} of this fibered site.
\end{remark}

By analogy, or using the remark above, one can define a site $XY$ as above for any morphism of sites $X \xrightarrow{f} Y$.  In addition to the situation considered above, we will also make use of the site $XY$ when $f$ is the morphism of sites associated to a finite morphism of schemes.

A sheaf on $XY$ can be viewed as a triple $(F, G, \varphi)$ where $F$ is a sheaf on $X$, $G$ is a sheaf on $Y$, and $\varphi : F \rightarrow f^\ast G$ is a morphism of sheaves on $X$.  We note that $j_! G = (0, G, 0)$ and $i_\ast F = (F, 0, 0)$, so for any sheaf $\sF$ on $XY$, there is an exact sequence
\begin{equation}  \label{eqn:22}
  0 \rightarrow j_! j^\ast \sF \rightarrow \sF \rightarrow i_\ast i^\ast \sF \rightarrow 0 .
\end{equation}

\begin{lem} \label{lem:exactness}
  Suppose that $p : X \rightarrow Y$ is a morphism of sites such that $p_\ast p^\ast = \id$ and $R p_\ast = p_\ast$.  Let $\pi : XY \rightarrow Y$ be the projection.  Then $\pi_\ast$ is exact on sheaves of abelian groups.
\end{lem}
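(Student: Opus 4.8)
The plan is to compute $\pi_\ast$ of a sheaf $\sF$ on $XY$ by decomposing $\sF$ via the exact sequence~\eqref{eqn:22} and checking that $\pi_\ast$ kills the obstruction to exactness, namely the higher direct images $R^q\pi_\ast$ of the two ``pieces'' $j_!j^\ast\sF$ and $i_\ast i^\ast\sF$. Recall from the discussion preceding the lemma that $R\pi_\ast i_\ast = Rp_\ast = p_\ast$ (using the hypothesis $Rp_\ast = p_\ast$) and $R\pi_\ast j_\ast = \id$; moreover $j_\ast = \pi^\ast$. The only genuinely new ingredient needed is the behavior of $\pi_\ast$ on the \emph{extension by zero} $j_!$, as opposed to the pushforward $j_\ast$.

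First I would establish that $R\pi_\ast j_! = 0$, or at least that $\pi_\ast j_!$ is exact and $R^1\pi_\ast j_! = 0$, which is all that is needed. Here $j : Y \to XY$ is the inclusion of the ``bottom'' copy of $Y$ in the total site, corresponding to objects $UV$ with $U = \emptyset$. Concretely, $\pi : XY \to Y$ has $\pi^\ast V = (X\fp_U V \to V)$, and the key structural fact (from $Rp_\ast p^\ast = \id$) is that $\pi_\ast$ of a sheaf $(F,G,\varphi)$ on $XY$ is computed by the equalizer of $G \rightrightarrows p_\ast F$ coming from $\varphi$ and the unit $G \to p_\ast p^\ast G$; but more simply, $\pi_\ast(F,G,\varphi) = \ker\bigl(G \to p_\ast(\mathrm{coker}(\varphi))\bigr)$ — I would verify the precise form by direct inspection of sections over $\pi^\ast V$. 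Since $j_! G = (0, G, 0)$, we get $\pi_\ast j_! G = G$ functorially and exactly, and the higher cohomology vanishes because a flasque (or injective) resolution of $G$ on $Y$ pushes forward under $j_!$ to something whose $\pi_\ast$ stays exact — this uses that $j_!$ is exact and that $X \to Y$ satisfies the stated hypotheses. Equivalently, one checks $R^q\pi_\ast j_! = 0$ for $q > 0$ by the Leray spectral sequence for $\pi\circ j = \id_Y$ combined with the fact that $j_!$ takes injectives to $\pi_\ast$-acyclics.

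Next I would apply $R\pi_\ast$ to the short exact sequence~\eqref{eqn:22}. This yields a long exact sequence
\begin{equation*}
  0 \to \pi_\ast j_!j^\ast\sF \to \pi_\ast\sF \to \pi_\ast i_\ast i^\ast\sF \to R^1\pi_\ast j_!j^\ast\sF \to \cdots .
\end{equation*}
By the previous step $R^1\pi_\ast j_!j^\ast\sF = 0$, and $R^q\pi_\ast i_\ast i^\ast\sF = R^q p_\ast i^\ast\sF = 0$ for $q > 0$ as well since $Rp_\ast = p_\ast$. Running this argument on an injective resolution $\sF \to \sI^\bullet$ — noting that $j^\ast\sI^\bullet$ and $i^\ast\sI^\bullet$ remain resolutions by $\pi_\ast j_!$- resp.\ $\pi_\ast i_\ast$-acyclic objects — the associated spectral sequences degenerate, and one concludes that a short exact sequence $0 \to \sF' \to \sF \to \sF'' \to 0$ on $XY$ gives a short exact sequence $0 \to \pi_\ast\sF' \to \pi_\ast\sF \to \pi_\ast\sF'' \to 0$ on $Y$, i.e.\ $\pi_\ast$ is exact.

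\textbf{Main obstacle.} The routine parts (the long exact sequences, the vanishing of $R^q p_\ast$) are immediate from the hypotheses; the real work is the computation $R\pi_\ast j_! = 0$, since $j_!$ is not a pushforward and the intuition ``$\pi\circ j = \id$'' only controls $j_\ast$, not $j_!$. I expect to handle this by an explicit description of sections of $j_! G$ and its injective envelope over the covering sieves of $\pi^\ast V$, using the hypothesis $p_\ast p^\ast = \id$ to see that nothing is lost upon applying $\pi_\ast$, and $Rp_\ast = p_\ast$ to kill the higher terms; alternatively, a clean way is to observe that $\pi_\ast$ admits the exact left adjoint $\pi^\ast = j_\ast$ composed appropriately — but since that is false in general, the honest route is the sheaf-level computation sketched above.
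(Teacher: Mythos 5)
Your overall architecture matches the paper's: use the short exact sequence~\eqref{eqn:22} and left exactness of $\pi_\ast$ to reduce the lemma to proving $R^1\pi_\ast (i_\ast F) = 0$ and $R^1\pi_\ast (j_! G) = 0$, and you dispatch the first correctly via $R\pi_\ast i_\ast = R p_\ast = p_\ast$. However, the second vanishing --- which, as you rightly say, is where the real content lies --- is not actually established, and the specific assertions you offer in its place are either false or unsupported. The claim $\pi_\ast j_! G = G$ is wrong: one has $\pi_\ast j_! G = 0$, as forced by the paper's stronger conclusion $R\pi_\ast j_! G = 0$ (intuitively, $j_! G = (0,G,0)$ has trivial $X$-part, so a section of it over $\pi^\ast V$ restricts to zero on $X_V$, and the hypothesis $p_\ast p^\ast = \id$ then kills the $V$-part as well). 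The Leray spectral sequence for $\pi \circ j = \id$ governs $R\pi_\ast \circ Rj_\ast$, not $j_!$; since $j_!$ is a \emph{left} adjoint of the exact functor $j^\ast$, it preserves projectives rather than injectives, and there is no composition-of-derived-pushforwards identity in which it participates. The fallback you describe --- a ``sheaf-level computation'' over covers of $\pi^\ast V$ --- is gestured at but never performed, so the gap remains open.

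The paper closes this gap with a short trick worth internalizing: apply the sequence~\eqref{eqn:22} to $\pi^\ast G = j_\ast G$ itself, obtaining
\begin{equation*}
  0 \to j_! G \to \pi^\ast G \to i_\ast p^\ast G \to 0 .
\end{equation*}
Here $R\pi_\ast \pi^\ast G = G$ (from $R\pi_\ast j_\ast = \id$) and $R\pi_\ast i_\ast p^\ast G = R p_\ast p^\ast G = p_\ast p^\ast G = G$ (using both hypotheses on $p$), and the induced map $G \to G$ is an isomorphism by the hypothesis $p_\ast p^\ast = \id$. The long exact sequence of $R\pi_\ast$ then forces $R\pi_\ast j_! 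G = 0$ in every degree. In effect, rather than resolving $j_! G$ by injectives, the paper resolves it by the two explicitly $\pi_\ast$-acyclic sheaves $\pi^\ast G$ and $i_\ast p^\ast G$, whose higher direct images are directly computable from the hypotheses on $p$. This is the ingredient missing from your proposal.
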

\begin{proof}
  It is enough to show that $R^1 \pi_\ast \sF = 0$ for any sheaf of abelian groups $\sF$ on $XY$.  For this it is enough, by the exactness of~\eqref{eqn:22}, to see that $R^1 \pi_\ast (j_! G) = 0$ and $R^1 \pi_\ast (i_\ast F) = 0$ for all sheaves $F$ on $X$ and $G$ on $Y$.  For the second, note that $i_\ast$ is exact, and therefore $R^1 \pi_\ast (i_\ast F) = R^1 (\pi_\ast i_\ast) F = R^1 p_\ast F$, which is zero by hypothesis.

  Now consider $R^1 \pi_\ast ( j_! G)$.  We have an exact sequence
  \begin{equation*}
    0 \rightarrow j_! G \rightarrow \pi^\ast G \rightarrow i_\ast p^\ast G \rightarrow 0,
  \end{equation*}
  which is in fact the exact sequence~\eqref{eqn:22} applied to $\pi^\ast G = (p^\ast G, G, \id_{p^\ast G})$.  We have $R \pi_\ast \pi^\ast G = G$ and we have just seen that $R \pi_\ast i_\ast p^\ast G = R p_\ast p^\ast G$, which is $G$ by hypothesis.  Therefore $R \pi_\ast j_! G = 0$.
\end{proof}

\begin{lem} \label{lem:closed}
  Suppose $f : X \rightarrow X'$ is a closed embedding of $Y$-schemes.  Let $h : XY \rightarrow X'Y$ be the induced morphism.  Then $h$ is acyclic.
\end{lem}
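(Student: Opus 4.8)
The plan is to read off the behaviour of $h$ from that of the closed immersion $f$ on small étale sites, using the ``triple'' description of sheaves on the total sites $XY$ and $X'Y$ recalled above; here ``$h$ is acyclic'' means $Rh_\ast = h_\ast$, i.e. $R^q h_\ast = 0$ for $q>0$, i.e. $h_\ast$ is exact. The first point is that exactness is detected \emph{componentwise} on $XY$: a sequence of abelian sheaves on $XY$ is exact if and only if its images under $i^\ast$ and $j^\ast$ are exact on $\et(X)$ and $\et(Y)$ respectively. One direction is immediate since $i^\ast$ and $j^\ast$ are exact; for the converse, $i_\ast$ and $j_!$ are exact as well, so applying them to the component sequences and inserting the results into the functorial exact sequences~\eqref{eqn:22} for the three terms gives exactness of the original sequence by a short diagram chase. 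The same statement holds on $X'Y$ with the corresponding functors $i',j'$ in place of $i,j$.

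Next I would identify $h_\ast$ componentwise. The morphism $h$ comes from the continuous functor $X'Y \to XY$ sending an object $U' \to V'$ (with $U'$ étale over $X'$) to $U' \times_{X'} X \to V'$; this functor carries objects pulled back from $\et(X')$ to objects pulled back from $\et(X)$ by base change along $f$, and is the identity on objects pulled back from $\et(Y)$. Feeding this into the formula $(h_\ast\mathcal{F})(U' \to V') = \mathcal{F}(U' \times_{X'} X \to V')$ yields $(i')^\ast h_\ast\mathcal{F} = f_\ast\, i^\ast\mathcal{F}$ and $(j')^\ast h_\ast\mathcal{F} = j^\ast\mathcal{F}$. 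In other words, under the triple description $h_\ast$ acts on the $X$-component by $f_\ast$ and on the $Y$-component by the identity (the compatibility map of the resulting triple is then determined by the construction and need not be written down explicitly).

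To finish, recall the classical fact that for a closed immersion $f\colon X \hookrightarrow X'$ the pushforward $f_\ast$ on small étale sites is exact: the stalk of $f_\ast\mathcal{F}$ at a geometric point $\bar x'$ of $X'$ is $\mathcal{F}_{\bar x'}$ if $\bar x'$ factors through $X$, and $0$ otherwise (since $X$ is closed, a point not lying on $X$ has an étale neighbourhood whose pullback to $X$ is empty), and exactness is checked on stalks. Combining the three steps: $h_\ast$ is exact on both the $X$- and $Y$-components, hence exact by the first step, so $Rh_\ast = h_\ast$ and $h$ is acyclic. If one also wants that $h_\ast$ be fully faithful, the identity $f^\ast f_\ast = \id$ for closed immersions gives $h^\ast h_\ast = \id$ by exactly the same componentwise bookkeeping.

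I do not expect any individual step to be genuinely hard; the thing to be careful about is the total-site formalism of Section~\ref{sec:site} — namely that exactness really is detected on the pair $(i^\ast, j^\ast)$ and that $h_\ast$ really is ``$f_\ast$ on the $X$-part, the identity on the $Y$-part''. Both are formal once one has the identification of $XY$ as the total site of the fibered site attached to $\et(X) \to \et(Y)$, together with the exactness of $i_\ast, i^\ast, j_!, j^\ast$; the only place the hypothesis that $f$ is a closed immersion enters is the standard exactness of closed pushforward on small étale sites.
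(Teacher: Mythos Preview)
Your overall strategy is sound, but the key identity $(i')^\ast h_\ast = f_\ast\, i^\ast$ is false, and this is where your argument breaks. Take $X = \emptyset$ (a closed subscheme of any $X'$). Then $XY \simeq \et(Y)$ via $j$, and for any sheaf $\sF$ on $XY$ with $G = j^\ast \sF$ one computes directly $(h_\ast \sF)(U',V') = \sF(\emptyset,V') = G(V')$, so $h_\ast \sF = j'_\ast G$ and hence $i'^\ast h_\ast \sF = p'^\ast G$. On the other hand $i^\ast \sF$ lives on $\et(\emptyset)$ and is trivial, so $f_\ast i^\ast \sF = 0$. Thus $i'^\ast h_\ast \sF \neq f_\ast i^\ast \sF$ whenever $G \neq 0$. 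More generally, the correct description is $i'^\ast h_\ast \sF \cong f_\ast F \times_{f_\ast p^\ast G} p'^\ast G$: at a geometric point $\bar x'$ of $X'$ this gives $F_{\bar x}$ if $\bar x' \in X$ and $G_{p'(\bar x')}$ if $\bar x' \notin X$. Your formula $j'^\ast h_\ast = j^\ast$ is correct, and that part of the argument goes through.

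The good news is that the corrected $X'$-component is still visibly exact, since its stalks are either stalks of $F$ or stalks of $G$; so your componentwise scheme can be repaired. But notice that this repair \emph{is} a stalk computation, and that is exactly the paper's route: rather than wrestle with the triple description of $h_\ast$, the paper localizes at geometric points of $X'Y$, reducing to the case where $Y$ and $X'$ are spectra of strictly henselian local rings (or $X'$ is empty). The closed-immersion hypothesis then forces $X$ to be henselian local or empty as well, whence $XY$ has no nontrivial covers and global sections on $XY$ are exact. This sidesteps the bookkeeping on total sites entirely and uses the closedness of $f$ in the same place you do, just phrased as ``a closed subscheme of a henselian local scheme is again local'' rather than ``$f_\ast$ is exact''.
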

\begin{proof}
  It is enough to prove this after localizing at a dense set of geometric points in $X'Y$.  We can therefore assume $Y$ is the spectrum of a strictly henselian local ring and $X'$ is either empty or the spectrum of a strictly henselian local ring, and with that assumption, we only need to show that the global sections functor on $XY$ is exact.  But in that case $X$ is either empty or the spectrum of a henselian local ring, and in either case $XY$ admits no covers other than by itself, which implies that the global sections functor must be exact.
\end{proof}

\subsection{Obstruction theories for rubber maps}
\label{sec:obs-rub}

In this section we will study an $S$-point of $\oM_{\rel}(\sP / \BGm)$, 
\begin{equation} \label{eqn:7} \xymatrix{
    C \ar[r] \ar[d]_p & \tsP \ar[d] \\
    S \ar[r] & \tsT .
  }
\end{equation}
We wish to describe an obstruction theory for $\oM_{\rel}(\sP/\BGm)$ relative to $\oM_{\rel}(\sP)$ at this $S$-point.  We will do this by comparing obstruction theories for each of these spaces relative to the moduli space of curves.

These obstruction theories are defined in Sections~\ref{sec:obthy1} and~\ref{sec:obthy2}, but we only prove that they are perfect relative obstruction theories in Sections~\ref{sec:obs-ax} and~\ref{sec:perfection}, since this fact is not directly relevant to our purpose in this section and relies on some technical calculations in Section~\ref{sec:calc}.  In Section~\ref{sec:leftarrow} we define the relative obstruction theory for $\oM_{\rel}(\sP/\BGm)$ over $\oM_{\rel}(\sP)$ using the method introduced in Section~\ref{sec:compatible}.

\subsubsection{Extension of the moduli problems} \label{sec:extension}
To construct the virtual class for rubber stable maps, we will use a technique inspired by J.\ Li's construction of the virtual class for stable maps to degenerations.  We were not able to understand Li's construction in its entirety so we have modified it slightly at several points.  

The guiding principle is that the obstruction theory should be obtained by gluing local obstruction theories that are canonical.  The non-uniqueness of the obstruction theory for a given moduli problem arises from the possibility of choosing different meanings for ``local''.  The site defined in Section~\ref{sec:site} provides a natural definition for ``local'' for the moduli problems at hand, and therefore the obstruction theories we define are in some sense canonical.  To define the obstruction theories, we effectively extend the moduli problems $\oM_{\rel}(\sP/\BGm)$ and $\oM_{\rel}(\sP)$ to this site, and define the natural obstruction theories in this setting, obtaining the global obstruction theories by gluing.  Although we will not use the extended moduli problems in an explicit way in what follows, they will nevertheless play an important background role; we therefore introduce the notation $\ufM_{\rel}(\sP/\BGm)$ and $\ufM_{\rel}(\sP)$ for them so we can make periodic comments about them.

Modulo the equivalence between torsors and \v{C}ech calculations, this is the same method used by Li in \cite{L02}, except Li's extension of the moduli problem is slightly ambiguous:  \cite[Lemma~1.12]{L02} is true with the definition of $\Hom(f^\ast \Omega_{W[n]}, I)^\dagger$ in \cite[p.~216]{L02} only for charts of first kind $p : U \rightarrow V$ where $p^{-1} \cO_V \rightarrow \cO_U$ is injective.  In general, the definition of $\Hom(f^\ast \Omega_{W[n]}, I)^\dagger$ must be modified slightly to yield Lemma~1.12 as stated (see Section~\ref{sec:calc} for some indications about this modification).  We note, however, that the first claim of \cite[Lemma~1.12]{L02} is unaffected by the discussion above, provided one chooses $U$ and $V$ to be small enough; we will use this claim essentially later in this section.

One way to remedy the ambiguity mentioned above is to modify the definition of charts of first kind and assume that $U$ always dominates $V$.  To make this change amounts to extending the moduli problems $\oM_{\rel}(\sP/\BGm)$ and $\oM_{\rel}(\sP)$ to $CS$ in a way that allows the expansion of $\sP$ to vary in a locally constant manner on the curve $C$ (instead of on the base $S$ as it normally does).  Although this extension does yield an obstruction theory, we do not expect this obstruction theory to be perfect.

We have selected the following natural extension of the moduli problem instead:  for $UV \in CS$, a $UV$-point of $\oM_{\rel}(\sP/\BGm)$ is a commutative diagram
\begin{equation*} \xymatrix{
    U \ar[r] \ar[d] & \tsP \ar[d] \\
    V \ar[r] & \tsT
  }
\end{equation*}
that is non-degenerate and predeformable.  This is equivalent in principle to redefining $\Hom(f^\ast \Omega_{W[n]}, I)^\dagger$ to be the collection of dashed arrows completing the diagram
\begin{equation*} \xymatrix{
  U \ar[r] \ar[d] \ar@/^15pt/[rr] & U[p^\ast I_V] \ar@{-->}[r] \ar[d] & \tsP_{W[n]} \ar[d] \\
  V \ar[r] \ar@/_15pt/[rr] & V[I_V] \ar@{-->}[r] & W[n] .
} \end{equation*}
As was remarked already, this does not change $\Hom(f^\ast \Omega_{W[n]}, I)^\dagger$ when $U \rightarrow V$ is surjective.

%
\subsubsection{The obstruction theory for $\oM_{\rel}(\sP/\BGm)$} \label{sec:obthy1} 
We describe the relative obstruction theory~$\fE$ for~$\oM_{\rel}(\sP/\BGm)$ over~$\fM$.  Let $J$ be a quasi-coherent sheaf on $S$.  For $UV \in CS$ and $J$ a quasi-coherent sheaf on $V$, define $T(UV, J)$ to be the category of predeformable completions of the diagram
\begin{equation*} \xymatrix{
    U \ar[r] \ar[d] \ar@/^15pt/[rr] & U[p^\ast J] \ar@{-->}[r] \ar[d] & \tsP \ar[d] \\
    V \ar[r] \ar@/_10pt/[rr] & V[J] \ar@{-->}[r] & \tsT
  }
\end{equation*}
Then $T(UV,J)$ is additively cofibered with respect to $J$, so each $T(CS,J)$ is an abelian $2$-group.  Allowing $UV$ to vary in $CS$ we obtain an abelian group stack $\uT(CS, J)$ for each quasi-coherent sheaf $J$ on $S$.  When $CS$ and $J$ are fixed we will permit ourselves to drop them from the notation and write $\uT$ for the corresponding sheaf on $CS$.

Consider an extension problem
\begin{equation}  \label{eqn:8} \xymatrix{
    C \ar[r] \ar[d] \ar@/^10pt/[rr] & C' \ar@{-->}[r] \ar[d] & \tsP \ar[d] \\
    S \ar[r] \ar@/_10pt/[rr] & S' \ar@{-->}[r] & \tsT \\
  }
\end{equation}
in which $C'$ is flat over $S'$ and $I_{S/S'} = J$ and we search for \emph{predeformable} dashed arrows rendering the whole diagram commutative.  Solutions to this problem form a pseudo-torsor under $T(CS,J)$.  This follows from the fact that Artin stacks respect pushouts of infinitesimal extensions of schemes and the pushout of two predeformable morphisms is still predeformable.  By \cite[Lemma~1.12]{L02}, a solution exists locally in $CS$, so the stack on $CS$ of solutions to~\eqref{eqn:8} is a torsor under the abelian group stack $\uT(CS,J)$.  Denote by $\fE(S,J)$ the category of torsors under $\uT(CS,J)$ on $CS$.  Note that $\fE(S,J)$ is a $2$-category by definition, but it is equivalent to a $1$-category because the identity section of $T(CS,J)$ has no \emph{global} automorphisms on $CS$, by stability.

\begin{rem}
  The stack $\uT(CS,J)$ may be viewed as the relative tangent bundle for a morphism of ``refined'' moduli spaces $\ufM_{\rel}(\sP/\BGm) \rightarrow \ufM$ in which the hypotheses about connectedness of the fibers and properness in the moduli problems are relaxed.
\end{rem}

An object of $\Def_{\fM}(S, J)$ corresponds to a diagram
\begin{equation*} \xymatrix{
    C \ar[r] \ar[d] & C' \ar[d] \\
    S \ar[r] & S' .
  }
\end{equation*}
in which $I_{S/S'} = J$.  Any such extension gives rise to a lifting problem~\eqref{eqn:8}, hence to a torsor on $CS$ under the abelian group stack $\uT(CS,J)$.  Trivializations of this torsor correspond precisely to solutions to the lifting problem~\eqref{eqn:8}.  We therefore obtain a cartesian diagram
\begin{equation*} \xymatrix{
    \Def_{\oM_{\rel}(\sP/\BGm)}(S,J) \ar[r] \ar[d] & e(S,J) \ar[d] \\
    \Def_{\fM}(S,J) \ar[r] & \fE(S,J) .
  }
\end{equation*}
This verifies one axiom of an obstruction theory for the map $\oM_{\rel}(\sP/\BGm) \rightarrow \fM$; the remaining axioms are checked in Section~\ref{sec:obs-ax}.  Its perfection is checked in Section~\ref{sec:perfection}.




%
\subsubsection{The obstruction theory for $\oM_{\rel}(\sP)$} \label{sec:obthy2}
The obstruction theory $\fE'$ for $\oM_{\rel}(\sP)$ over $\fM$ is described similarly, with $\sT^2$ replacing $\tsT$.  Solutions to the lifting problem
\begin{equation}  \label{eqn:9} \xymatrix{
    C \ar[r] \ar[d] \ar@/^15pt/[rr] & C' \ar@{-->}[r] \ar[d] & \tsP \ar[d] \\
    S \ar[r] \ar@/_15pt/[rr] & S' \ar@{-->}[r] & \sT^2 .
  }
\end{equation}
form a pseudo-torsor under the abelian group stack of completions of the diagram
\begin{equation}  \label{eqn:10} \xymatrix{
    C \ar[r] \ar[d] \ar@/^15pt/[rr] & C[p^\ast J] \ar@{-->}[r] \ar[d] & \tsP \ar[d] \\
    S \ar[r] \ar@/_15pt/[rr] & S[J] \ar@{-->}[r] & \sT^2 .
  }
\end{equation}
We denote this abelian group category by $T'(CS,J)$ which extends, as before, to a sheaf $\uT'(CS,J)$ on $CS$.  Take $\fE'(S,J)$ to be the category of torsors on $CS$ under $\uT'(CS,J)$.  Then, as above, $\fE'$ is a relative obstruction theory for the map $\oM_{\rel}(\sP)\rightarrow \fM$.

%
\subsubsection{The relative obstruction theory for $\oM_{\rel}(\sP/\BGm)$ over $\oM_{\rel}(\sP)$} \label{sec:leftarrow}

Working with a fixed $S$-point~\eqref{eqn:2} of $\oM_{\rel}(\sP/\BGm)$ and a fixed quasi-coherent sheaf $J$ on $S$, we shall write $\uT$ for $\uT(CS,J)$ and $\uT'$ for $\uT'(CS,J)$.

Recall the isomorphism $\tsT\simeq\sT^2\times B\Gm$ from Proposition~\ref{prop:1}.  Composition with the projection $\tsT \rightarrow \sT^2$ induces a map $\uT \rightarrow \uT'$.  Let $\uT''$ be the quotient $\uT / \uT'$ in the sense of abelian group stacks, i.e. the associated stack of $(S,J) \mapsto T(S,J) / T'(S,J)$.  Set $\fE''(S,J) = T''(CS,J)$.  This is just $\pi_\ast \uT''$ where  $\pi : CS \rightarrow \et(S)$ is the projection from Section~\ref{sec:site}.  We wish to understand $\fE''$ more explicitly and see that it is a relative obstruction theory for $\oM_{\rel}(\sP / \BGm) \rightarrow \oM_{\rel}(\sP)$.

\begin{prop} \label{prop:ext}
  The sequence
  \begin{equation} \label{eqn:16}
    0 \rightarrow \fE'' \rightarrow \fE \rightarrow \fE' \rightarrow 0
  \end{equation}
  is exact and forms a compatible sequence of obstruction theories for the sequence of maps
  \begin{equation*}
    \oM_{\rel}(\sP/\BGm) \rightarrow \fM_{\rel}(\sP) \rightarrow \fM.
  \end{equation*}
  Furthermore, there is a natural equivalence $\fE''(S, J) \simeq \Ext(\bE, J)$ where $\bE$ is the Hodge bundle.
\end{prop}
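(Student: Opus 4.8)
\emph{Proof plan.}
The plan is twofold: deduce the exactness of \eqref{eqn:16} and the compatibility of the three obstruction theories from the general formalism of Section~\ref{sec:compatible}; and then identify $\fE''$ with the relative obstruction theory obtained by pulling back the normal bundle of the zero section $Z$ in the relative Jacobian $J$ along the cartesian square~\eqref{diag:costelloagain}. The Hodge bundle $\bE$ will appear for the same reason it appears in the proof of Theorem~\ref{thm:cart}: the conormal bundle of $Z$ in $J$ is $\bE$.

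For the first part, the projection $\tsT\to\sT^2$ of Proposition~\ref{prop:1} induces a map $\uT\to\uT'$, and by the construction of Section~\ref{sec:compatible} its kernel $\fE''$ fits into $0\to\fE''\to\fE\to\fE'\to 0$, compatibly with the sequence $\oM_{\rel}(\sP/\BGm)\to\fM_{\rel}(\sP)\to\fM$. Exactness as \'etale stacks on $\oM_{\rel}(\sP/\BGm)$ is the assertion that $\uT\to\uT'$ is an epimorphism, which holds because the only datum forgotten by $\tsT\to\sT^2$ --- the $\BGm$-factor, i.e.\ the line bundle on the base together with its pullback to the curve --- can be extended \'etale-locally on $S$. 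This leaves $\fE''(S,J)=\pi_\ast\uT''$ with $\uT''=\uT/\uT'$ and $\pi:CS\to\et(S)$ the projection of Section~\ref{sec:site}.

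Next I would compute $\uT''$ explicitly. Under the splitting $\tsT\simeq\sT^2\times\BGm$, a predeformable completion of the square into $\tsP/\tsT$ differs from one into $\tsP/\sT^2$ precisely by a completion of the $\BGm$-factor, i.e.\ by a deformation of the line bundle $\oQ$ on $S$ (equivalently of its pullback $L$ to $C$); but this $\BGm$-datum is constrained to match the line bundle carried by the already-present $\oM_{\rel}(\sP)$-map, which by the commutativity argument in the proof of Theorem~\ref{thm:cart} is $L\cong\pi^\ast M$ with $M=\cO_{\oC}\bigl(\sum_i\alpha_i\pi(p_i)-\sum_j\beta_j\pi(q_j)\bigr)$. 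Hence $\uT''$ is the abelian group stack on $CS$ governing the descent of $M$ along $\pi:C\to\oC$ and $p:C\to S$, and $\fE''=\pi_\ast\uT''$ is the obstruction-and-automorphism groupoid of that descent problem. Its automorphisms and obstructions are computed by $H^0$ and $H^1$ of $\cO_{\oC}$ (tensored with the relevant sheaf) along $\oC\to S$; relative Serre duality along the universal smooth curve identifies $R^1(\oC/S)_\ast\cO_{\oC}$ with $\bE^\vee$ (the comparison of $H^1(\oC,\cO_{\oC})$ with $H^1(C,\cO_C)$ needed for the descent to be well posed being \cite[Lemma~3.1.7]{ACW10}), and we obtain a natural equivalence $\fE''(S,J)\simeq\Ext(\bE,J)$ with $\bE$ the Hodge bundle. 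As an independent cross-check --- and the form in which this feeds into Section~\ref{sec:leftarrow} --- one may argue through the Costello square: it is cartesian (Theorem~\ref{thm:cart}) and $Z\hookrightarrow J$ is the zero section of the abelian scheme $J$ over the moduli of smooth curves, hence a regular closed immersion of codimension $g$ whose conormal bundle is the cotangent space of the Jacobian fibres along the zero section, canonically $\bE$; so the relative obstruction theory for $\oM_{\rel}(\sP/\BGm)\to\oM_{\rel}(\sP)$ pulled back from the normal bundle of $Z$ is the tautological obstruction theory $\Ext(\bE,-)$ of this regular immersion, matching $\fE''$.

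I expect the genuine difficulty to be the identification of the site-theoretic quotient $\uT''$ with the descent problem for $M$, and nothing larger. Since predeformability is not an open condition one cannot simply split the deformation functors; one must work on $CS$ and exploit that $\uT$ and $\uT'$ differ only in the $\BGm$-factor of the target while the curve-level line bundle is pinned to the base through $L\cong\pi^\ast M$. This is precisely the step in which the non-exactness of pushforward along the genus $g$ curve --- i.e.\ $R^1(\oC/S)_\ast\cO_{\oC}\cong\bE^\vee$ --- upgrades the naive rank-one $\BGm$-datum to the rank-$g$ Hodge bundle.
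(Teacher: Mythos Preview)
Your overall architecture is right: the difference between $\fE$ and $\fE'$ is governed by the $\BGm$-factor of $\tsT\simeq\sT^2\times\BGm$, and that factor produces the Hodge bundle. But there are two problems, one a genuine gap and one an unnecessary detour.

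\textbf{The surjectivity gap.} You assert that exactness of \eqref{eqn:16} as \'etale stacks is ``the assertion that $\uT\to\uT'$ is an epimorphism.'' This is not so. The objects $\fE(S,J)$ and $\fE'(S,J)$ are categories of \emph{torsors} on $CS$ under $\uT$ and $\uT'$; the map $\fE\to\fE'$ is change of structure group. Even if $\uT\to\uT'$ is an epimorphism on $CS$ (which it is, for the reason you give), the induced map on torsors need not be surjective: the fiber over a $\uT'$-torsor is a $\uT''$-torsor on $CS$, and you must show all such torsors are trivial. The paper does exactly this: having identified $\uT''$ with the cone of $j_\ast J[1]\to i_\ast p^\ast J[1]$, it observes that $\uT''$-torsors on $CS$ are classified by $R^2p_\ast J$, which vanishes because $C$ has relative dimension one over $S$. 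Your argument skips this step entirely.

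\textbf{The detour through $M$ and $\oC$.} Invoking $L\cong\pi^\ast M$ and framing $\uT''$ as ``the descent of $M$ along $\pi:C\to\oC$ and $p:C\to S$'' imports machinery from Theorem~\ref{thm:cart} that is irrelevant here. The line bundle $M$ on $\oC$ played a role there to establish commutativity of the Costello square; in the present computation it is a red herring. The paper's route is cleaner and more direct: it introduces auxiliary stacks $\uL(CS,J)=\ch(j_\ast J[1])$ and $\uL'(CS,J)=\ch(i_\ast p^\ast J[1])$ recording deformations of line bundles on $S$ and on $C$ respectively, observes that the square $(\uT,\uT',\uL,\uL')$ is cartesian (so $\fE''\simeq\fF''$, the kernel on the $\uL$-side), and reads off $\fF''$ as $\ch$ of the cone of $J[1]\to Rp_\ast p^\ast J[1]$. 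Since $J\to p_\ast p^\ast J$ is an isomorphism, that cone is $R^1p_\ast p^\ast J\cong\bE^\vee\otimes J$, whence $\fE''(S,J)\simeq\Hom(\bE,J)$. No passage through $\oC$ is needed; the identification $H^1(C,\cO_C)\cong H^1(\oC,\cO_{\oC})$ is implicit in $R^1p_\ast\cO_C\cong\bE^\vee$, but the argument never leaves $C\to S$.

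Your cross-check via the normal bundle of $Z\subset J$ is correct and is what the paper does \emph{after} this proposition; it is not a substitute for the computation here.
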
     
\begin{proof}
  The left exactness of the sequence~\eqref{eqn:16} is immediate from the definition of $\fE''$.  This implies that $\fE''$ is a relative obstruction theory for $\oM_{\rel}(\sP/\BGm)$ over $\oM_{\rel}(\sP)$, as in Section~\ref{sec:compatible}.  Compatibility of the obstruction theories will thus follow from surjectivity of the map $\fE \rightarrow \fE'$.  Note, however, that the lifts of any given section form a $\uT''$-torsor on $CS$.  It is therefore enough to show that all such torsors are trivial relative to $S$.  Along the way, we will see that $\fE''(S,J) = \Hom(\bE, J)$ where $\bE$ is the Hodge bundle of $C$.


  First we calculate $\pi_\ast \uT''$ explicitly.  Let $L(CS,J)$ be the space of extensions
  \begin{equation*} \xymatrix{
      S \ar[d] \ar[r] & \BGm \\
      S[J] \ar@{-->}[ur],
    }
  \end{equation*}
  and let $L'(CS,J)$ be the space of extensions
  \begin{equation} \label{eqn:13} \xymatrix{
      C \ar[r] \ar[d] & \BGm \\
      C[p^\ast J] \ar@{-->}[ur].
    }
  \end{equation}
  These extend to abelian group stacks $\uL$ and $\uL'$ on $CS$ and we have a commutative diagram
  \begin{equation*} \xymatrix{
      \uT \ar[r] \ar[d] & \uT' \ar[d] \\
      \uL \ar[r] & \uL'
    }
  \end{equation*}
  coming from the isomorphism $\tsT \simeq \sT^2 \times \BGm$.  In fact, this diagram is cartesian, so if we define $\fF(S,J)$ to be the $2$-category of $\uL(CS,J)$-torsors on $CS$ and $\fF'(S,J)$ to be the $2$-category of $\uL'(CS,J)$-torsors on $CS$, the diagram
  \begin{equation} \label{eqn:17} \xymatrix{
      \fE \ar[r] \ar[d] & \fE' \ar[d] \\
      \fF \ar[r] & \fF'
    }
  \end{equation}
  is also cartesian.  

  Let $\uL''$ be the quotient abelian group stack $\uL' / \uL$.  This is an abelian $3$-group on $CS$, but we will only be interested in $\fF'' = \pi_\ast L''$, which is an abelian $2$-group because an automorphism of a line bundle on $S$ is determined \emph{globally} by its pullback to $C$.  We can identify $\fF''$ with the kernel of $\fF \rightarrow \fF'$.  Since $\fE''$ was defined analogously and Diagram~\eqref{eqn:17} is cartesian, the induced map $\fE'' \rightarrow \fF''$ must be an equivalence.

We follow the notation of \cite[XVIII.1.4]{sga4-3} and use $\ch$ to denote the assignment which takes a $2$-term complex concentrated in degrees $[-1,0]$ to its associated Picard stack.  It is easy to see that $\uL(CS, J)$ is represented on $CS$ by $\ch(j_\ast J[1])$ and $\uL'(CS, J)$ is represented by $\ch(i_\ast p^\ast J[1])$ (see Section~\ref{sec:site} for the definitions of $i$ and $j$).  It is also immediate to verify that the map $\uL \rightarrow \uL'$ is represented by the natural map $j_\ast J[1] \rightarrow i_\ast p^\ast J[1]$ induced from the map $J \rightarrow p_\ast p^\ast J$.  If we take $K$ to be the cone of the map $J[1] \rightarrow R p_\ast p^\ast J[1]$, we obtain an exact triangle
\begin{equation*}
  \cO_S[1] \rightarrow R p_\ast \cO_C[1] \rightarrow K \rightarrow \cO_S[2]
\end{equation*}
and we have $\ch(K) = \fF''$.  Since $J \rightarrow p_\ast p^\ast J$ is an isomorphism by Zariski's main theorem, we can also see that $K$ is isomorphic to $R^1 p_\ast J[0] = \Ext^1(\bE, J)$, where $\bE$ is the Hodge bundle.  This induces a functorial isomorphism $\fE''(S, \cO_S) \simeq \Gamma(S, \bE^\vee)$.  

It follows, furthermore, from the above that torsors on $CS$ under $\uT''(CS, J)$ are parameterized up to isomorphism by $R^2 p_\ast J$, which is zero since $C$ is $1$-dimensional over $S$, so all $\uT''(CS, J)$-torsors on $CS$ are trivial.  This gives the surjectivity of $\fE \rightarrow \fE'$, since the fiber above any section of $\fE'$ is a $\uT''$-torsor on $CS$.
\end{proof}

In particular, $\ufE''(S, \cO_S)$, which was a priori an abelian group stack, is actually just a sheaf of abelian groups, in the usual sense.  This proves that $\fE''$ at least has the same underlying vector bundle as the obstruction theory pulled back from that of $Z$ in $J$.

The one thing remaining to be checked to deduce that the obstruction theory of $\oM_{\rel}(\sP/\BGm)$ over $\oM_{\rel}(\sP)$ is pulled back from that of $Z$ over $J$ is that the \emph{obstructions} coming from our two obstruction theories are the same.  Consider a lifting problem
\begin{equation*} \xymatrix{
    S \ar[r] \ar[d] & \oM_{\rel}(\sP/\BGm) \ar[r] \ar[d] & \tsT \ar[d] \\
    S' \ar[r] \ar@{-->}[ur] & \oM_{\rel}(\sP) \ar[r] & \sT^2 
  }
\end{equation*}
in which $S'$ is a square-zero extension of $S$ with $I_{S/S'} = \cO_S$.  Let $C$ and $C'$ be the curves over $S$ and $S'$ associated to the maps $S \rightarrow \oM_{\rel}(\sP/\BGm)$ and $S' \rightarrow \oM_{\rel}(\sP)$.  The lifting problem above translates immediately into the extension problem
\begin{equation*} \xymatrix{
    C \ar[r] \ar[d] & C' \ar[r] \ar[d] & \BGm \\
    S \ar[r] \ar@/_30pt/[urr] & S' \ar@{-->}[ur]
  }
\end{equation*}
and the obstruction is precisely the class of the line bundle associated to the map $C' \rightarrow \BGm$ in the dual of the Hodge bundle.  This is the same as the obstruction defined earlier coming from the inclusion of $Z$ in $J$.  This completes the proof.


\begin{prop}
The map $\oM_{\rel}(\sP)\rightarrow \fM$ is lci and $\fE'$ coincides with the canonical relative obstruction theory.
\end{prop}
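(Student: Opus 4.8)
The plan is to prove both statements simultaneously, by identifying the obstruction complex underlying $\fE'$ with the relative cotangent complex $\bL_{\oM_{\rel}(\sP)/\fM}$. Lci-ness then follows at once, because $\bL_{\oM_{\rel}(\sP)/\fM}$ will be seen to be perfect of amplitude $[-1,0]$, which is precisely the cotangent-complex criterion for a morphism locally of finite presentation to be a local complete intersection. I would dispose first of the easy point that $\oM_{\rel}(\sP)\to\fM$ is of Deligne--Mumford type: $\oM_{\rel}(\sP)$ is a Deligne--Mumford stack, and the base change of $\oM_{\rel}(\sP)\to\fM$ along a scheme $S\to\fM$ is the moduli of relative stable maps from the corresponding family of prestable marked curves, so its objects have finite automorphism groups by relative stability.

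The real work is in the identification, and it rests on how $\fE'$ was built in Section~\ref{sec:obthy2}: $\fE'(S,J)$ is the stack of torsors on the site $CS$ under the relative tangent sheaf $\uT'(CS,J)$ of the extended moduli problem $\ufM_{\rel}(\sP)\to\ufM$ of Section~\ref{sec:extension}. I would first record that the first assertion of \cite[Lemma~1.12]{L02} --- the part unaffected by the omission discussed in Section~\ref{sec:extension}, once $U$ and $V$ are taken small enough --- guarantees that the predeformable lifting problems of~\eqref{eqn:9} have solutions locally on $CS$. Hence $\ufM_{\rel}(\sP)\to\ufM$ is formally smooth on $CS$; in particular $\uT'(CS,J)\simeq\mathcal{H}om_{\cO_{CS}}(\Omega,\pi^\ast J)$ for a locally free sheaf $\Omega$ on $CS$, and $\fE'$ is exactly the canonical obstruction theory attached to a locally trivial deformation problem in the sense of Section~\ref{sec:unobs}, only formed on the site $CS$ rather than on $\et(S)$.

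Next I would push forward along $\pi\colon CS\to\et(S)$. A $\uT'$-torsor on $CS$, together with the automorphisms of the trivial torsor, is classified by $R\Gamma(CS,\uT'(CS,J))=R\Gamma(S,R\pi_\ast\uT'(CS,J))$, so by the dictionary of Section~\ref{sec:obs} one has $\fE'=\ch((E')^\vee[1])$ with $E'=(R\pi_\ast\uT'(CS,\cO_S))^\vee$. Since $\pi$ is, \'etale-locally over $S$, the total site of the universal curve --- which is proper of relative dimension $1$ --- the functor $R\pi_\ast$ sends the locally free sheaf $\uT'(CS,\cO_S)$ to a perfect complex in degrees $[0,1]$, so $E'$ is perfect of amplitude $[-1,0]$ (this perfection is the content of Section~\ref{sec:perfection}). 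It remains to identify $E'$ with $\bL_{\oM_{\rel}(\sP)/\fM}$: because an $S$-point of $\oM_{\rel}(\sP)$ over $\fM$ is the same datum as a $CS$-point of $\ufM_{\rel}(\sP)$ over $\ufM$, and the latter problem is formally smooth on $CS$, the relative deformation functor of $\oM_{\rel}(\sP)$ over $\fM$ and its torsor structure under $\Def_\fM$ are governed by $E'$ in exactly the way the cotangent complex is supposed to govern them; by the deformation-theoretic characterisation of $\bL$ this yields a quasi-isomorphism $E'\simeq\bL_{\oM_{\rel}(\sP)/\fM}$. Thus $\fE'$ is the canonical relative obstruction theory, and $\bL_{\oM_{\rel}(\sP)/\fM}$ being perfect of amplitude $[-1,0]$, the morphism is lci.

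I expect the main obstacle to be this last identification $E'\simeq\bL_{\oM_{\rel}(\sP)/\fM}$ --- equivalently, verifying that $R\pi_\ast\uT'(CS,-)$ truly has the universal property of $R\mathcal{H}om(\bL_{\oM_{\rel}(\sP)/\fM},-)$. The difficulty is that $\oM_{\rel}(\sP)\to\fM$ is not representable near the predeformable locus and its natural local models are not smooth over $\fM$, so no presentation of the cotangent complex can be written down by inspection; one must instead invoke the principle of local unobstructedness announced for \cite{obs}, whose two concrete inputs here are Li's local solvability on $CS$ and the $1$-dimensionality of the fibres of $CS\to\et(S)$. As an independent check on lci-ness I would also argue \'etale-locally on the source: away from nodes of $C$ lying over nodes of the target expansion the map to $\tsP$ is unobstructed, while at such a node predeformability cuts out a single hypersurface --- the smoothing parameter of the target matched with the appropriate power of that of the source --- so $\oM_{\rel}(\sP)$ is locally the vanishing locus of a regular sequence inside a stack smooth over $\fM$, and comparing the resulting conormal presentation with $\fE'$ recovers the identification.
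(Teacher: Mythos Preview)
Your approach differs substantially from the paper's, and the main line of argument has a real gap at the step you yourself flag. You attempt to prove $E'\simeq\bL_{\oM_{\rel}(\sP)/\fM}$ directly by appealing to a ``deformation-theoretic characterisation of $\bL$''. But this characterisation does not say what you need: \emph{any} perfect obstruction theory $E'\to\bL$ has the feature that deformations and obstructions are governed by $E'$ in exactly the way they are governed by $\bL$---that is the definition of an obstruction theory. The fact that the lifting problem is formally smooth on $CS$ tells you only that $\fE'$ is an obstruction theory built from torsors under a tangent sheaf on the refined site; it does not pin down the pushforward as the cotangent complex on $\et(S)$. Concretely, the pushforward $R\pi_\ast$ of the canonical obstruction theory on $CS$ has no a priori reason to coincide with the canonical obstruction theory on $\et(S)$, because $\bL$ does not commute with $R\pi_\ast$ in any useful sense here. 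Your ``independent check'' via local hypersurface equations at essential nodes is closer to a workable argument for lci-ness, but you do not carry it out, and matching the resulting conormal presentation with $\fE'$ globally is exactly the hard part.

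The paper avoids this entirely by a reduction-to-dense-open trick. It first cites Proposition~\ref{prop:perfect} for perfection of $\fE'$ and \cite{obs} to upgrade $\fE'$ to a Behrend--Fantechi obstruction theory. Then \cite[Lemma~B.2]{ACW10} is invoked: to conclude both lci-ness and that $\fE'$ is canonical, it suffices to verify canonicity on a smooth dense open substack $U$. The paper takes $U$ to be the locus of maps with unexpanded target, already shown dense in Proposition~\ref{DMtype}. There the source curve is smooth, a map to $\sP$ is just a pair of disjoint divisors, and deformations are sections of the normal bundle $N_{D/C}\otimes J$ along the finite scheme $D$ of marked points. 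Since $D\to S$ is finite, $R^1\psi_\ast=0$ and the obstruction groups vanish locally on $S$, so $\fE'$ is visibly the canonical obstruction theory on $U$. This is both shorter and sidesteps the identification you could not complete; the cost is dependence on the external lemma from \cite{ACW10}.
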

\begin{proof}
  We have seen that $\oM_{\rel}(\sP)$ has a smooth, dense open substack $U$ that is isomorphic to the stack parameterizing smooth curves with $T$ disjoint sections with weights summing to zero.  By Proposition~\ref{prop:perfect}, $\fE'$ is a perfect relative obstruction theory for $\oM_{\rel}(\sP)$ over $\fM$.  It is shown in \cite{obs} that any perfect obstruction theory that is locally of finite presentation (in the sense of Proposition~\ref{prop:lfp-E}) arises from a perfect obstruction theory in the sense of Behrend--Fantechi.  Therefore, by \cite[Lemma~B.2]{ACW10}, it now suffices to check that this obstruction theory is the canonical one on the open substack $U$.

  Let $p : C \rightarrow S$ be a smooth curve over $S$ and suppose given a map $f : C \rightarrow \sP$.  To give a map $C \rightarrow \sP$ is the same as to give a pair of disjoint divisors in $C$.  This holds without any assumption of properness on $C$, and therefore also holds for the open subsets of $C$.  

  Let $D$ be the union of the pre-images of $0$ and $\infty$.  For any quasi-coherent sheaf $J$ on $S$, the extensions of $f$ to a map $C[p^\ast J] \rightarrow \sP$ are the same as the extensions of $D$ to a divisor on $C[p^\ast J]$, which are parameterized by $\Gamma(D, N_{D/C} \tensor J)$.  In other words, the sheaf $T'$ of extensions on $CS$ is pushed forward by the closed embedding $h : D \rightarrow CS$.  Let $\psi : D \rightarrow S$ be the projection.  Then $h_\ast$ is exact since $h$ is a closed embedding, so $R^1 \pi_\ast \uT' = R^1 \psi_\ast h^\ast \uT'$.  On the other hand $R^1 \psi_\ast = 0$ since $D$ is finite over $S$.  Therefore the obstructions for $\oM_{\rel}(\sP)$ over $\fM$ all vanish locally in $S$, which is exactly what is needed.
\end{proof}

\begin{cor}\label{cor:absolutevirclass}
  The virtual class for $\oM_{\rel}(\sP / \BGm)$ relative to $\oM_{\rel}(\sP)$ is equal to the absolute virtual class.
\end{cor}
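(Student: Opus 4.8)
The plan is to obtain this as a formal consequence of the obstruction-theoretic work already assembled, using the virtual-pullback compatibility discussed in Section~\ref{sec:compatible}. First I would recall that the absolute virtual class $[\oM_{\rel}(\sP/\BGm)]^\vir$ is, following \cite{GV05}, the virtual class determined by the perfect obstruction theory of $\oM_{\rel}(\sP/\BGm)$ relative to the stack $\fM$ of prestable curves, which is exactly the obstruction theory $\fE$ constructed in Section~\ref{sec:obthy1} as our reinterpretation of J.\ Li's construction via the site $CS$. Since $\fM$ is smooth, the intrinsic normal cones $\fC_{\oM_{\rel}(\sP/\BGm)}$ and $\fC_{\oM_{\rel}(\sP/\BGm)/\fM}$ determine the same cycle class inside $\fE$, so there is no ambiguity in calling this the absolute class, and it equals the virtual class attached to $\fE$ viewed as a relative obstruction theory over $\fM$.

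Next I would invoke Proposition~\ref{prop:ext}, which furnishes a compatible exact sequence of perfect relative obstruction theories
\[
0 \rightarrow \fE'' \rightarrow \fE \rightarrow \fE' \rightarrow 0
\]
for the tower $\oM_{\rel}(\sP/\BGm) \rightarrow \oM_{\rel}(\sP) \rightarrow \fM$, in which $\fE''$ is precisely the relative obstruction theory for $\oM_{\rel}(\sP/\BGm)$ over $\oM_{\rel}(\sP)$ and hence the one defining the left-hand side of the asserted equality. By the preceding proposition, the morphism $\oM_{\rel}(\sP) \rightarrow \fM$ is lci and of Deligne--Mumford type, and $\fE'$ is its canonical relative obstruction theory; consequently the relative virtual class of $\oM_{\rel}(\sP)$ over $\fM$ is just the fundamental class $[\oM_{\rel}(\sP)]$.

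The corollary then follows from the discussion at the end of Section~\ref{sec:compatible}: we recalled there, citing \cite[Theorem~4]{Manolache}, that for a tower $X \rightarrow Y \rightarrow Z$ carrying a compatible exact sequence of perfect obstruction theories in which $Y \rightarrow Z$ is lci of Deligne--Mumford type with its canonical relative obstruction theory, the virtual class on $X$ determined by $\fE_{X/Y}$ coincides with the one determined by $\fE_{X/Z}$ --- because the relative virtual class of $Y$ over $Z$ is the fundamental class $[Y]$, whose virtual pullback to $X$ equals the virtual pullback of $[Z]$. Taking $X = \oM_{\rel}(\sP/\BGm)$, $Y = \oM_{\rel}(\sP)$ and $Z = \fM$, this identifies the virtual class attached to $\fE''$, i.e.\ $[\oM_{\rel}(\sP/\BGm)/\oM_{\rel}(\sP)]^\vir$, with the one attached to $\fE$, which by the first paragraph is the absolute virtual class $[\oM_{\rel}(\sP/\BGm)]^\vir$.

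Since the substantive input --- the construction of the compatible exact sequence in Proposition~\ref{prop:ext}, the perfection of the obstruction theories, and the lci/canonical statement for $\oM_{\rel}(\sP)\rightarrow\fM$ --- has already been carried out, no genuine obstacle remains. The only point requiring care is the identification in the first paragraph of $\fE$ with the obstruction theory implicit in the definition of the virtual class of \cite{GV05}; this is the purpose of Section~\ref{sec:extension}, and once it is granted the present statement is purely formal.
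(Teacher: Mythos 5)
Your proposal is correct and follows essentially the same route as the paper: it combines the compatible exact sequence from Proposition~\ref{prop:ext}, the identification of $\fE'$ as the canonical obstruction theory for the lci morphism $\oM_{\rel}(\sP)\to\fM$, and the Manolache/virtual-pullback discussion at the end of Section~\ref{sec:compatible}. The paper's own proof is a two-line compression of exactly this argument, so no further comparison is needed.
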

\begin{proof}
  By the compatibility of the obstruction theories, the relative class for $\oM_{\rel}(\sP / \BGm)$ over $\oM_{\rel}(\sP)$ coincides with the virtual pullback of the virtual class of $\oM_{\rel}(\sP)$ relative to $\fM$.  But since $\fE'$ is the canonical obstruction for the lci morphism $\oM_{\rel}(\sP) \rightarrow \fM$, the relative virtual class must be the fundamental class.
\end{proof}

\section{Localization}
In this section, we summarize the localization techniques we use and prove Lemma~\ref{lem:psiclass}, which evaluates a specific product of $\psi$ classes.  We refer the reader to \cite[Part~2]{GJV06} and \cite[Sections~0 and~1]{FP05} for a thorough treatment of relative virtual localization in the case of $\PP^1$.  Localization in the context of stable maps was introduced by Kontsevich in \cite{K95}.  Virtual localization was derived in \cite{GP99} and later for relative stable maps in \cite{GV05}.

\subsection{Virtual localization}
\label{sec:vl}

Let $X$ be a  proper Deligne-Mumford stack with a $\C^\ast$-equivariant perfect obstruction theory. Let $\iota_i:F_i\to X$ be the irreducible components of the fixed locus by the torus action. Then, for any ${[\sK]}\in \Chow_{\C^\ast}(X)$:
\begin{equation}
\label{gpvl}
{[\sK]}=\sum_i \iota_{i\ast}\frac{{[\sK]}_{|F_i}}{e(N^{vir}_{F_i})} \in \Chow_{\C^\ast}(X)\otimes_{\C[t]}\C(t),
\end{equation}
where $e(N^{vir}_{F_i})$ is the virtual equivariant Euler class of the virtual normal bundle to the fixed locus, as described in \cite{GP99}.
We apply (\ref{gpvl}) to spaces of relative stable maps to $\PP^1$.

\subsection{Localization and relative maps to $\PP^1$}Let $\C^\ast$ act on $\PP^1$ with fixed points $0$ and $\infty$ and weights $1$  on $T_0\PP^1$ and and $-1$ on $T_\infty\PP^1$. Deonte by $t$  the first Chern class of the standard representation of $\C^\ast$, so that $\Chow^\ast_{\C^\ast}(\text{pt})=\Q[t]$. The action on $\PP^1$ induces an action of $\C^\ast$ on $\overline{\sM}_{g,n}(\PP^1, d\infty)$ (hereafter denoted $\rma{g,n}{d}$) and the virtual fundamental class $\virclass{\rma{g,n}{d}}$ has a canonical equivariant lift.  There is a branch morphism $\text{br}: \rma{g,n}{d} \to \text{Sym}^{r_\alpha^g}\PP^1$ which is  equivariant with respect to the induced action on the target.  A $\C^\ast$ fixed relative stable map $[C\to T \to \PP^1]$ (we write $T\to\PP^1$ for the contraction of the expanded components of the accordion $T$) over $\text{Spec } \C$ is characterized by the following properties:
\begin{figure}[bt]
\begin{center}
		\includegraphics[width=0.8\textwidth]{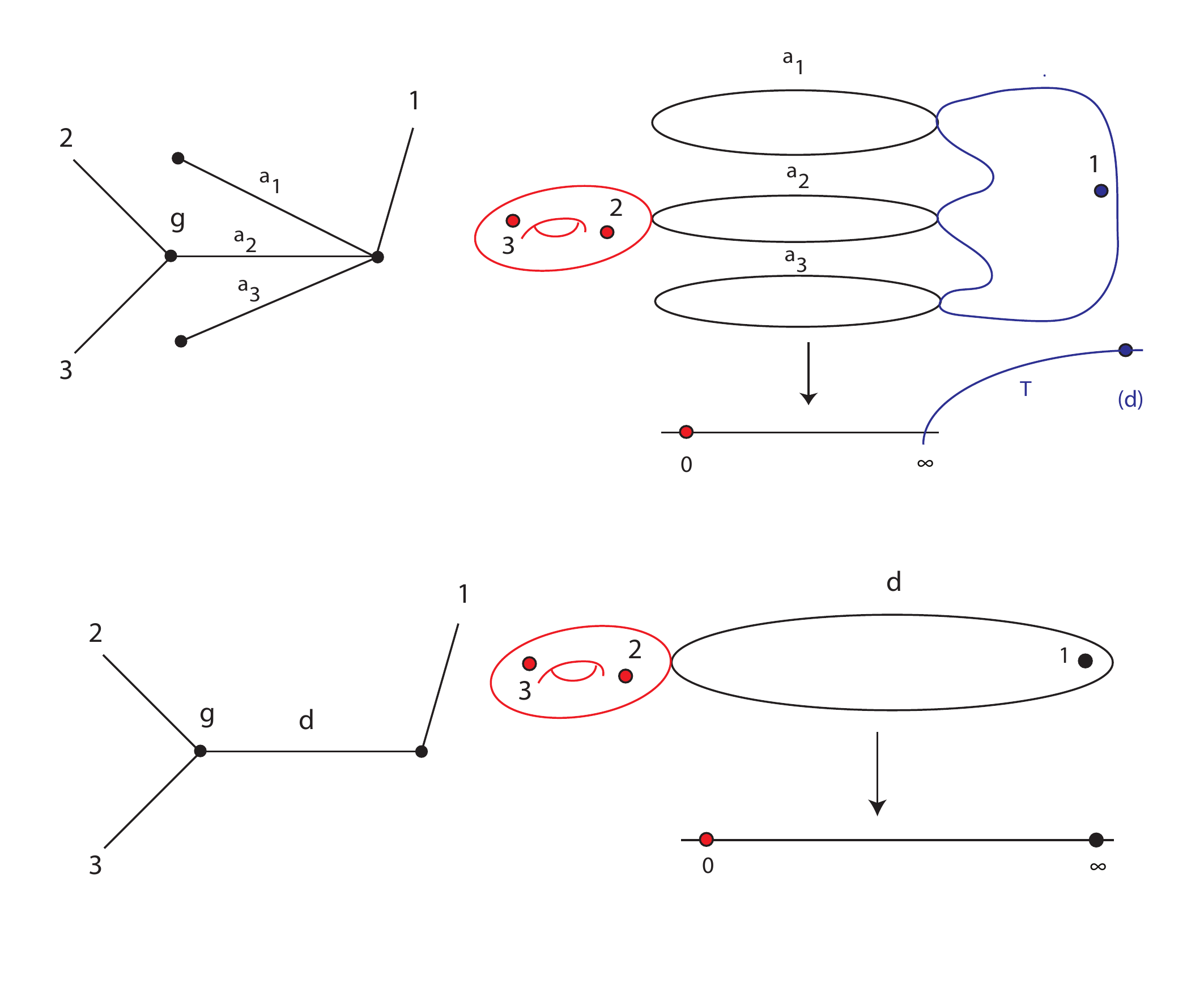}
\caption{Two torus fixed maps and their corresponding graphs}
\label{example1}
\end{center}
\end{figure}

\begin{enumerate}
\item any irreducible component of $C$ mapping surjectively to the base $\PP^1$ is a rational curve, fully ramified over the fixed points $0$ and $\infty$;
\item all other components must contract onto $0$ or map into the expanded components of $T$ which  contract onto $\infty$;
\item all markings lie over $0$ or $\infty$.
\end{enumerate} 
These features are illustrated in Figure \ref{example1}.
Since $P_g(\alpha;d)$ is defined by restricting to the rational tails locus, we only consider maps where the curve $C$ is rational tails.

The fixed loci of the $\C^\ast$-action on $\rma{g,n}{d}$ are indexed by localization graphs (see \cite[Section~1.3.2]{FP05}).  The torus fixed locus corresponding to a localization graph $\Gamma$ is isomorphic to $\cms_\Gamma / \A_\Gamma$, where $\A_\Gamma$ and its action on $\cms_\Gamma$ are described in \cite[Section~1.3.4]{FP05}.  There each $\Gamma$ is also assigned a multiplicity $m(\Gamma)$.  The relative virtual localization formula then says
\begin{equation}
\label{virloc}
\virclass{\rma{g,n}{d}}=\sum_{\Gamma}\frac{m(\Gamma)}{|\A_\Gamma|}{\iota_\Gamma}_\ast\left(\frac{\virclass{\cms_\Gamma}}{e_{\C^\ast}(N_\Gamma^{\text{vir}})}\right)
\end{equation}
in the ring $\A^{\C^\ast}_\ast(\rma{g,n}{d})\otimes\Q[t,\frac{1}{t}]$.  Here $N_\Gamma^{\text{vir}}$ is the virtual normal bundle to the fixed locus $\Gamma$ in $\rma{g,n}{d}$ and we are taking its equivariant Euler class.  There is an explicit formula for $\frac{1}{e_{\C^\ast}(N_\Gamma^{\text{vir}})}$ in \cite{GJV06} at the end of Section 3.4. 

Since all the genus of a rational tails curve is concentrated in a single irreducible component, all our contributing graphs are trees.  In fact, they are bipartite ``fans" since being relative to $\infty$ with full ramification $(d)$ allows for only a single connected component contracting to $\infty$.  We introduce notation for localization graphs of two types, depending on whether the genus $g$ component of the source curve lies above $0$ or $\infty$. 

Let $[C\to T\to \PP^1]$ be a $\C^\ast$ fixed map.  When the genus $g$ component lies above $0$, the contributing localization graphs $\Gamma_L(\nu,j)$  (see Figure \ref{example2}) are parameterized by the data of a partition $\nu=\nu_1+\ldots+\nu_l\vdash d$ for the degree of the edges and an integer $j\in\{1,\ldots,l\}$ indicating to which edge the genus $g$ component attaches. We include the data of the marked points in our notation by a subscript corresponding to each marked point placed on the corresponding part of $\nu$.   

\begin{figure}[h]
\begin{center} 
		\includegraphics[width=0.8\textwidth]{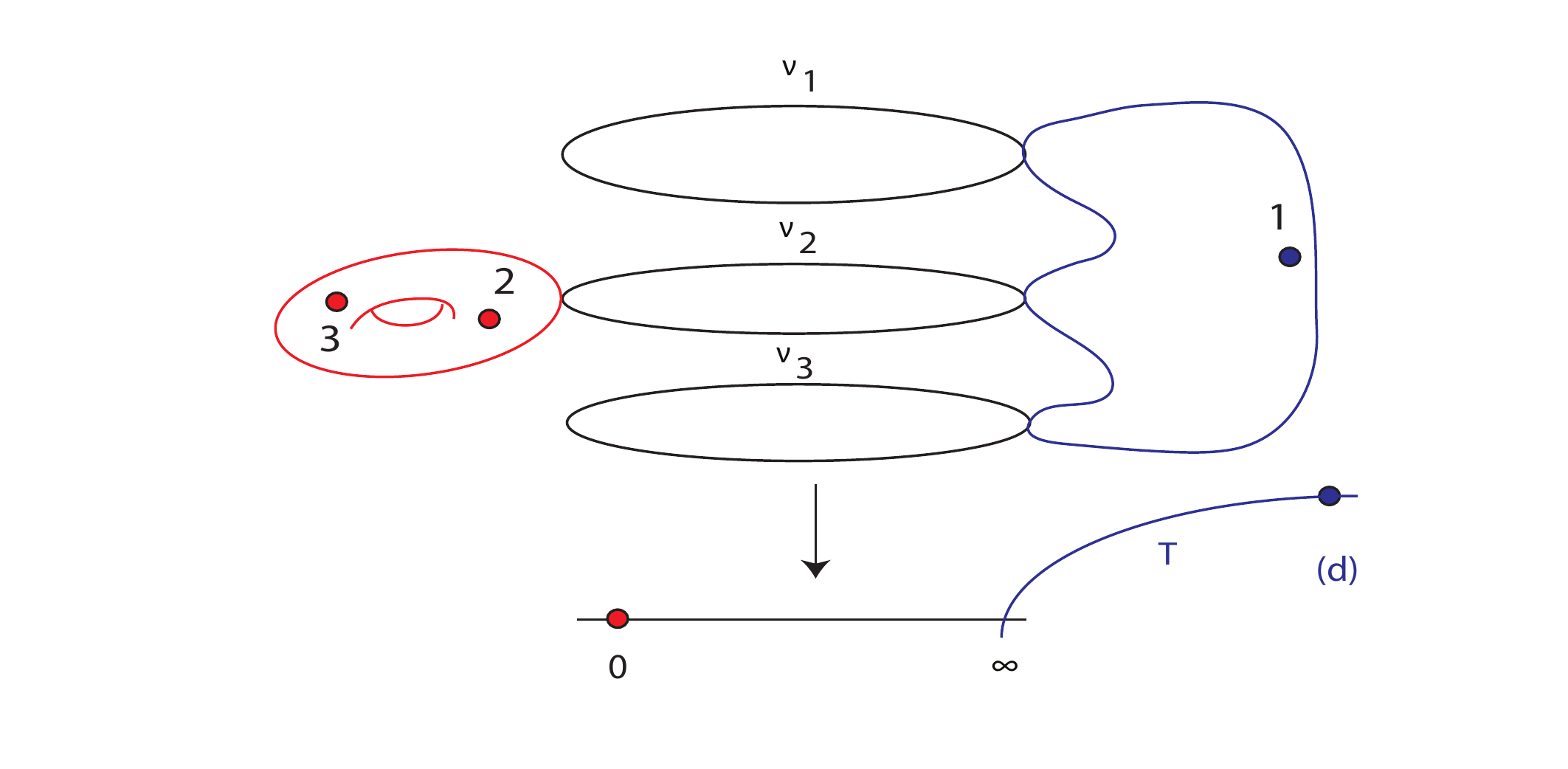}
\caption{Fixed map corresponding to the localization graph $\Gamma_L(\nu_1+({\nu_2})_{2,3}+\nu_3,2)$.}
\label{example2}
\end{center}
\end{figure}

These fans correspond to the fixed locus information:
\begin{align*}
\cms_\Gamma=\cms_{g,\text{val}(v_j)}\times\abrub{0}{\mu}{d} \times \prod_{i\neq j}\cms_{0,\text{val}(v_i)} & &m(\Gamma)=\prod_i\mu_i,
\end{align*}
where we share the abuse of notation of \cite{FP05} and intend $\cms_{0,1}=\cms_{0,2}:= pt$.

When the genus $g$ component lies above $\infty$, the contributing localization graphs $\Gamma_R(\nu)$ are parameterized only by the data of the partition $\nu\vdash d$ and the location of the marked points, as illustrated in Figure \ref{example3}.

\noindent The graph $\Gamma_R(\nu)$ corresponds to:
\begin{align*}
\cms_\Gamma=\abrub{g}{\mu}{d}\times\prod_{i}\cms_{0,\text{val}(v_i)}& &m(\Gamma)=\prod_i\mu_i
\end{align*}

\begin{figure}[h]
\begin{center}
		\includegraphics[width=0.8\textwidth]{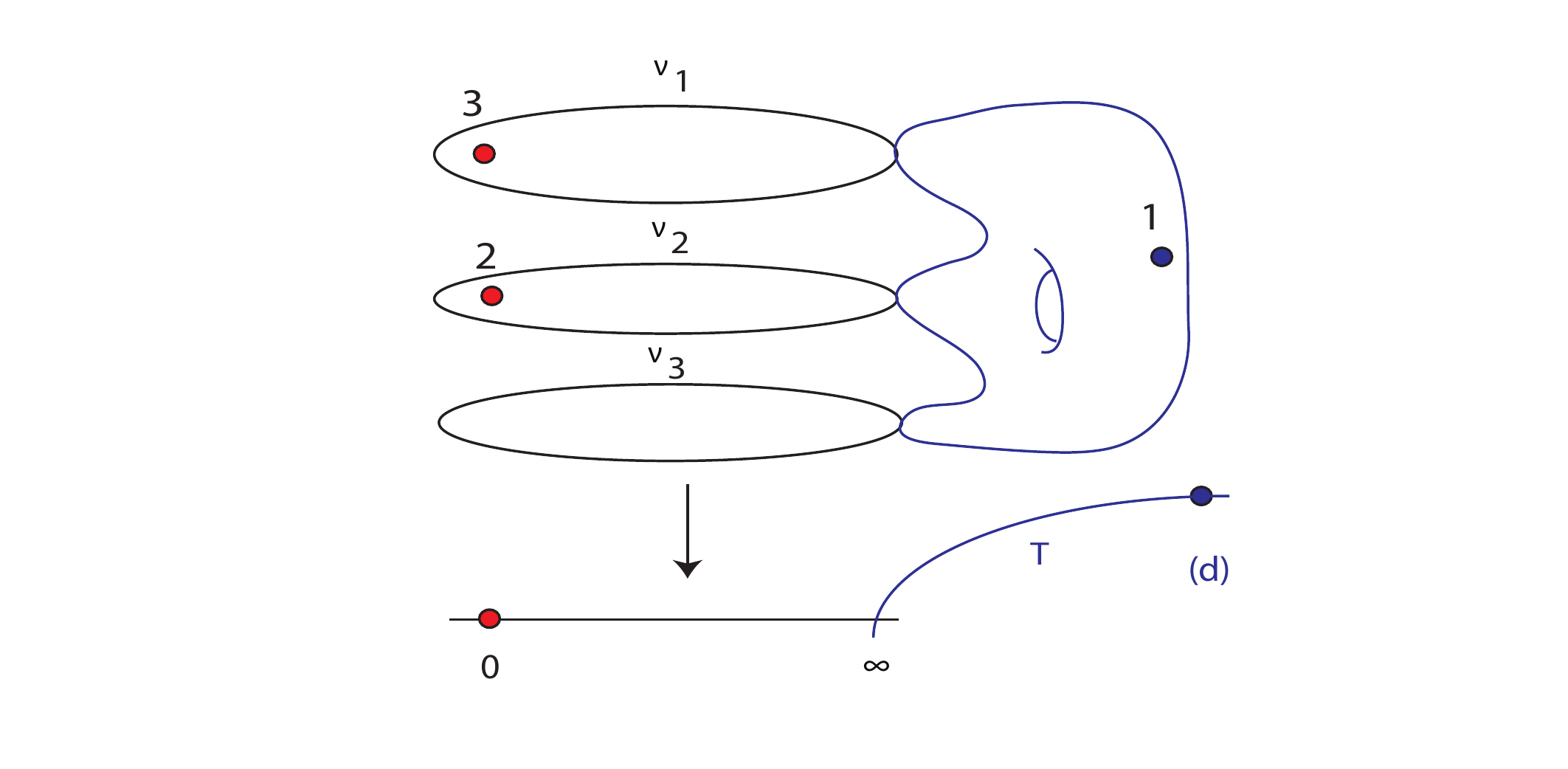}
\caption{Fixed map corresponding to the localization graph $\Gamma_R(({\nu_1})_3+({\nu_2})_{2}+\nu_3)$.}
\label{example3}
\end{center}
\end{figure}

\subsection{Cotangent line bundle classes from the target}\label{psiclasses}  The contribution of the equivariant Euler class of the virtual normal bundle in formula (\ref{virloc}) involves ``$\psi$" classes of two types.
  Smoothing nodes over $0$ gives ordinary $\psi_i$ classes on moduli spaces of curves. Smoothing nodes over $\infty$  produces a ``$\psi$ class below", which we denote by $\psi$ without a subscript, on a space of rubber relative maps $\abrub{0}{\alpha}{\beta}$.  This corresponds to the pullback of an appropriate $\psi$ class via the branch morphism $\abrub{0}{\alpha}{\beta} \to \cms_{0,r+2}/S_r$ (see \cite[Section 2.5]{GV05}).

\begin{lem}[\cite{GJV06}]
  Let $r = r^0_{\alpha,\beta} = l(\alpha)+l(\beta)-2$ be the number of simple ramification points for a map in  $\abrub{0}{\alpha}{\beta}$ and let $H^0_{\alpha,\beta}$ be the genus zero double Hurwitz number as defined in \cite[Section 1.4.1]{gjv:dhn}.  Then
\label{lem:psiclass}
\[
\int_{\virclass{\abrub{0}{\alpha}{\beta}}}\psi^{r-1}=\frac{1}{r!}H^0_{\alpha,\beta} .
\]
\end{lem}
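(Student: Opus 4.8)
The plan is to push the integral forward along the branch morphism and reduce it to an elementary one-point $\psi$-integral on $\cms_{0,r+2}$. Recall from \cite[Section~2.5]{GV05} (and from the discussion preceding the statement) that $\abrub{0}{\alpha}{\beta}$ carries a branch morphism
\[
  \br\colon \abrub{0}{\alpha}{\beta}\longrightarrow \cms_{0,r+2}/S_r,
\]
whose target records, as its $r+2$ marked points, the images of $0$ and $\infty$ together with the $r$ unordered simple branch points, and that the ``target'' cotangent class $\psi$ appearing in the statement equals $\br^\ast\bar\psi$, where $\bar\psi$ is the $\psi$-class at one of the two distinguished marked points (say the image of $\infty$). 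Both spaces have dimension $r-1$: the virtual dimension of $\abrub{0}{\alpha}{\beta}$ is $l(\alpha)+l(\beta)-3=r-1$, and $\dim\cms_{0,r+2}=(r+2)-3=r-1$. Hence, writing $p\colon\cms_{0,r+2}\to\cms_{0,r+2}/S_r$ for the quotient map, the projection formula gives
\[
  \int_{\virclass{\abrub{0}{\alpha}{\beta}}}\psi^{r-1}
  =\int_{\virclass{\abrub{0}{\alpha}{\beta}}}\br^\ast\bigl(\bar\psi^{\,r-1}\bigr)
  =\int_{\cms_{0,r+2}/S_r}\bar\psi^{\,r-1}\cdot\br_\ast\virclass{\abrub{0}{\alpha}{\beta}} .
\]

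I would then evaluate the two factors separately. Since $\cms_{0,r+2}/S_r$ is irreducible of dimension $r-1$, the pushforward $\br_\ast\virclass{\abrub{0}{\alpha}{\beta}}$ is a rational multiple of the fundamental class, and that multiple---the virtual degree of $\br$---can be computed over the dense open locus where the $r$ branch points are distinct (hence automatically disjoint from $0$ and $\infty$). There the target is unexpanded, $\abrub{0}{\alpha}{\beta}$ is the smooth Hurwitz space, its virtual class is the ordinary fundamental class, and $\br$ is finite; so the virtual degree is the automorphism-weighted count of genus~$0$ covers of $\PP^1$ with ramification $\alpha$ over $0$, $\beta$ over $\infty$, and $r$ simple branch points in the given general position---that is, the double Hurwitz number $H^0_{\alpha,\beta}$ of \cite[Section~1.4.1]{gjv:dhn}. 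For the other factor: since $S_r$ fixes the marking at the image of $\infty$, the ordinary $\psi$-class there is $S_r$-invariant, hence equal to $p^\ast\bar\psi$; as $p$ has degree $r!$ (the $S_r$-action being generically free) and $\int_{\cms_{0,r+2}}\psi^{r-1}=1$ by the standard genus-$0$ evaluation, we get $\int_{\cms_{0,r+2}/S_r}\bar\psi^{\,r-1}=\tfrac1{r!}$. Multiplying the two factors yields $\int_{\virclass{\abrub{0}{\alpha}{\beta}}}\psi^{r-1}=\tfrac1{r!}H^0_{\alpha,\beta}$.

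The formal steps above---the projection formula, the identity $\psi=\br^\ast\bar\psi$, and the one-point $\psi$-integral on $\cms_{0,n}$---are routine. The one place requiring genuine care is the identification of the virtual degree of $\br$ with $H^0_{\alpha,\beta}$: this amounts to matching the stacky count of covers over the $S_r$-quotient of the branch configuration space against the normalization of the double Hurwitz number adopted in \cite{gjv:dhn}, and in particular to keeping consistent track of whether branch points are labeled (which would introduce a compensating factor of $r!$ on the Hurwitz side) and of the automorphism weights. Once these conventions are pinned down the identity follows; an essentially equivalent bookkeeping can be carried out directly on the rigid space $\cms_{0}(\PP^1;\alpha 0,\beta\infty)$ by pulling back a point under its branch morphism, but the rubber formulation is the one used in the localization arguments that follow.
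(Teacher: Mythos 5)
Your argument is correct. The paper does not give its own proof --- it simply cites pages 21--22 of \cite{GJV06} --- and the route you take (push forward along $\br$, compute $\int_{\cms_{0,r+2}/S_r}\bar\psi^{\,r-1}=\frac1{r!}$ via the degree-$r!$ \'etale cover $\cms_{0,r+2}\to\cms_{0,r+2}/S_r$ and the standard $\int_{\cms_{0,n}}\psi_i^{n-3}=1$, and identify the virtual degree of $\br$ with $H^0_{\alpha,\beta}$ on the dense open locus of unexpanded targets where the rubber space is the smooth Hurwitz stack) is precisely the branch-morphism computation underlying the cited passage. One small remark: you need not invoke generic freeness of the $S_r$-action to get $\deg p=r!$, since $p\colon\cms_{0,r+2}\to[\cms_{0,r+2}/S_r]$ is an $S_r$-torsor by construction; and the caveat you correctly flag about the normalization of $H^0_{\alpha,\beta}$ is resolved by observing that both $\abrub{0}{\alpha}{\beta}$ and the \cite{gjv:dhn} convention label the preimages of $0$ and $\infty$, so the stacky fiber count of $\br$ (covers weighted by $1/|\mathrm{Aut}|$, with markings) matches the Hurwitz number on the nose.
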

\begin{proof}
We refer the reader to pages 21-22 of \cite{GJV06}.
\end{proof}

%
%
\section{Total Length 2}
In this section we compute $P_{g,2}(d;d)$, providing an independent proof of Theorem \ref{theorem:T=2}.  The class $P_{g,2}(d;d)$ has codimension $2g-1$ and lives in the socle of $R^\ast(\sM^{rt}_{g,2})$.  The morphism $\pi:\sM^{rt}_{g,2}\to\sM^{rt}_{g,1}$ forgetting the second marked point induces an isomorphism of socles. Computing $\pi_\ast(P_g(d,d))$ is then equivalent to computing the original class, and we abuse notation by omitting $\pi_\ast$ when working over $\sM^{rt}_{g,1}$. This shortcut has the threefold advantage of simplifying the combinatorics of our auxiliary integrals, of making explicit the comparison with Theorem 3.5 of \cite{GJV06}, and of allowing us to use a generating function for Hodge integrals on $\cms_{g,1}$ from \cite{FP00}.  
\begin{table}[t] 
\begin{tabular}[c]{| T || T | S |}
\hline
 & 1. \T\B & 2.\\ \hline
\hline
\multicolumn{3}{|c|}{Fixed Locus Information\T\B}\\
\hline
\hline
graph&$\Gamma_R(d)$&$\Gamma_{L\B}(\nu;j)\T$\\
\hline
$\frac{m(\Gamma)}{|\A_\Gamma|}$& $\frac{d}{d}$& $\displaystyle\frac{(\prod_{i=1}^k\nu_i^{m_i})m_j\T}{(\prod_{i=1}^k\nu_i^{m_i})(\prod_{i=1}^k m_i!)\B}$\\
\hline
fixed locus & $\abrub{g}{d}{d}$ \T\B& $\cms_{g,1}\times\abrub{0}{\nu}{d}$ \\
\hline
\hline
\multicolumn{3}{|c|}{Normal Bundle Contributions\T\B}\\
\hline
\hline
free point at 0 & $\displaystyle\frac{t}{d}$ & $\displaystyle\frac{t^{l-1}\nu_j\T}{\prod_{i=1}^k\nu_i^{m_i}\B}$\\  \hline 
smooth node at 0 & // & $\displaystyle\frac{t\T}{\frac{t}{\nu_j\B}-\psi_N}$ \\  \hline 
smooth node at $\infty$ & $\displaystyle\frac{1}{-t-\psi}$ & $\displaystyle\frac{1\T}{-t-\psi\B}$\\  \hline 
edge & $\displaystyle\frac{d^d}{d!t^d}$& $\displaystyle\frac{1}{t^d}\prod_{i=1\B}^{k\T}\left(\frac{\nu_i^{\nu_i}}{\nu_i!}\right)^{m_i}$\\  \hline 
Hodge & // & $\displaystyle\frac{\T t^g-t^{g-1}\lambda_1+\cdots+(-1)^g\lambda_g}{\B t}$\\  \hline 
$\br^\ast([H]^{d-1})$ & $(d-1)!t^{d-1}$ & $\displaystyle\frac{\T (2g+d-1-(l-1))!}{\B (2g-(l-1))!}t^{d-1}$ for $l\leq 2g+1$ \\ 
&& $\T\B 0$ for $l>2g+1 $ \\  \hline
\multicolumn{3}{c}{}
\end{tabular}
\caption{Localization Data for Theorem \ref{theorem:T=2}}
\label{T2data}
\end{table}

\subsection{Proof of Theorem \ref{theorem:T=2}}
The stabilization map: 
$$
\mu: \rma{g}{d} \to \cms_{g,1}
$$
is $\C^\ast$-equivariant with respect to the natural action on the space of maps, and the trivial action on the moduli space of curves. Let ${[\sK]} \in \Chow^\ast_{\C^\ast}(\rma{g}{d})$ be an equivariant lift of  the class $\text{br}^\ast([H]^{d-1})\lambda_g\lambda_{g-1}[1]^{vir}$. Then  applying Formula (\ref{gpvl}) and then pushing forward via $\mu$:

\begin{equation}
\label{auxT2}
\mu_\ast({[\sK]})= \sum_i \mu_{|F_i\ast}\left(\frac{{[\sK]}_{|F_i}}{e(N^{vir}_{F_i})}\right)
\end{equation}

\noindent Since the left hand side of (\ref{auxT2}) is a polynomial in the equivariant parameter $t$, the coefficient of $\frac{1}{t}$ on the right hand side must vanish. We evaluate such relations.\\
{\bf Note:} we (further) abuse notation by omitting $\mu_{|F_i\ast}$.

We choose the equivariant lift of $\text{br}^\ast([H]^{d-1})$ requiring a minimum of $d-1$ ramification above $0$.  Thus the only localization graph with a genus $g$ contracting component above $\infty$ has a single edge representing a trivial degree $d$ cover of $\PP^1$, $
\Gamma_R(d)$ (first column of Table \ref{T2data}).  The other contributing graphs $\Gamma_L(\nu;j)$ are parametrized by a partition $\nu=(\nu_1^{m_1}\cdots\nu_k^{m_k})\vdash d$  and an integer $j\in\{1,\ldots,k\}$.  Let $l$ be the length of the partition.  The $\Gamma_L(\nu;j)$ are found in the second column of Table \ref{T2data}.

The total contribution from Column 1 is:
\begin{align*}
&\frac{t}{d}\left(\frac{1}{-t-\psi}\right)\frac{d^d}{d!t^d}(d-1)!t^{d-1}\lambda_g\lambda_{g-1}\virclass{\abrub{g}{d}{d}}\\
=&\frac{1}{t}\left(\frac{-d^{d-1}(d-1)!}{d!}\lambda_g\lambda_{g-1}\virclass{\abrub{g}{d}{d}}\right)
\end{align*}

The total contribution from the graph $\Gamma(\nu,j)$ (from Column 2 when $l\leq 2g+1$) is:
\begin{align*}
&m_j\left(\frac{t^{l-1}\nu_j}{(\prod_{i=1}^k\nu_i^{m_i})(\prod_{i=1}^k m_i!)}\right)\left(\frac{t}{\frac{t}{\nu_j}-\psi_N}\right)\left(\frac{1}{-t-\psi}\right)\left(\displaystyle\frac{1}{t^d}\prod_{i=1}^k\left(\frac{\nu_i^{\nu_i}}{\nu_i!}\right)^{m_i}\right)\\&\left(\frac{t^g-t^{g-1}\lambda_1+\cdots+(-1)^g\lambda_g}{t}\right)\left(\frac{(2g+d-l)!}{(2g-l+1)!}\right)t^{d-1}\lambda_g\lambda_{g-1}[\cms_{g,1}]\times\virclass{\abrub{0}{\nu}{d}}.
\end{align*}
Expanding terms, this becomes 
\begin{align*}
&-t^{l-4}m_j\left(\frac{\nu_j^2}{\prod_{i=1}^k m_i!}\right)\left(\prod_{i=1}^k\left(\frac{\nu_i^{\nu_i-1}}{\nu_i!}\right)^{m_i}\right)\frac{(2g+d-l)!}{(2g-l+1)!}\left(1+\frac{\nu_j\psi_N}{t}+\frac{\nu_j^2\psi_N^2}{t^2}+\cdots\right)\left(1-\frac{\psi}{t}+\frac{\psi^2}{t^2}\mp\cdots\right)\\&\left(t^g-t^{g-1}\lambda_1+\cdots+(-1)^g\lambda_g\right)\lambda_g\lambda_{g-1}[\cms_{g,1}]\times\virclass{\abrub{0}{\nu}{d}}.
\end{align*}

Taking the $\frac{1}{t}$ coefficient, from Column 1 we get
\begin{align*}
\frac{-d^{d-1}(d-1)!}{d!}\lambda_g\lambda_{g-1}\virclass{\abrub{g}{d}{d}},
\end{align*}
and from Column 2:
\begin{align*}
&(-1)^{l-3}m_j\left(\frac{\nu_j^2}{\prod_{i=1}^k m_i!}\right)\left(\prod_{i=1}^k\left(\frac{\nu_i^{\nu_i-1}}{\nu_i!}\right)^{m_i}\right)\frac{(2g+d-l)!}{(2g-l+1)!}\\ &\psi^{l-2}\left(\nu_j^{g-1}\psi_N^{g-1}-\nu_j^{g-2}\psi_N^{g-2}\lambda_1+\cdots+(-1)^{g-1}\lambda_{g-1}\right) \lambda_g\lambda_{g-1}[\cms_{g,1}]\times\virclass{\abrub{0}{\nu}{d}}.
\end{align*}

Genus $0$, one part, marked double Hurwitz numbers are evaluated in \cite{gjv:dhn}:
\begin{equation}\label{gzop}
\text{H}^0_{\nu,(d)}=(l-1)!d^{l-2}.
\end{equation}
Using \eqref{gzop} and Lemma \ref{lem:psiclass}, the class $\psi^{l-2}$ evaluated on $\abrub{0}{\nu}{d}$ is $d^{l-2}$.  We  simplify notation and get rid of the $m_i$'s and $m_j$'s by writing our partition $\nu$ additively in all it's parts $\nu_1+\cdots+ \nu_l$ and writing $\text{Aut}(\nu)$ for $\prod_{i=1}^k m_i!$.  Summing all terms together and solving for $\lambda_g\lambda_{g-1}\virclass{\abrub{g}{d}{d}}$, we get:

\begin{align}
\label{vvv}
& \lambda_g\lambda_{g-1}\virclass{\abrub{g}{d}{d}}\\
=&\sum_{\nu\vdash d}(-1)^{l-3}\frac{d!}{d^{d-1}}\binom{2g+d-l}{d-1}\frac{d^{l-2}}{\text{Aut}(\nu)}\sum_{j=1}^l\nu_j^2\left(\prod_{i=1}^l\frac{\nu_i^{\nu_i-1}}{\nu_i!}\right)\int_{[\cms_{g,1}]}\frac{1-\lambda_1+\cdots+(-1)^g\lambda_g}{1-\nu_j\psi_1}\lambda_g\lambda_{g-1}[\text{pt}] \nonumber
\label{eqn:bypart}
\end{align}

\noindent The summation is over only those partitions of length $l\leq 2g+1$ since the graph contribution is zero otherwise.  

We borrow some notation from \cite{FP00} in anticipation of using one of their results.  Let $Q_g^e$ be the Hodge integral
\begin{align*}
\int_{[\cms_{g,1}]}\frac{1-\lambda_1+\cdots+(-1)^g\lambda_g}{1-e\psi_1}\lambda_g\lambda_{g-1}
\end{align*}
appearing in \eqref{vvv}.  For any formal power series $f(x)=\sum_if_ix^i$, let $\sC(x^i,f(x))=f_i$ denote the coefficient of $x^i$.  We  use $\sC(x^{d-e},\tau^l(x))$  for $\tau(x)=\sum_{r\geq1}\frac{r^{r-1}}{r!}$ to collect some of the coefficients in our expression.

Summing over possible values of $Q_g^e$, our expression becomes:
\begin{align*}
&\frac{1}{d^{d-1}}\sum_{e=1}^dQ_g^e\sum_{\substack{\nu\vdash d:\\ e\in\nu}}(-1)^{l-3}d!\binom{2g+d-l}{d-1}\frac{d^{l-2}}{\text{Aut}(\nu)}m_ee^2\left(\prod_{i=1}^l\frac{\nu_i^{\nu_i-1}}{\nu_i!}\right)[\text{pt}]
\end{align*}
where $m_e\in \Z$ is the number of parts in $\nu$ of size $e$.  Summing instead over partitions of $d-e$, we get:
\begin{align*}
&\frac{1}{d^{d-1}}\sum_{e=1}^dQ_g^e\sum_{\nu\vdash (d-e)}\frac{(2g+d-l-1)!}{(2g-l)!}\frac{(-d)^{l}}{\text{Aut}(\nu)}\frac{e^{e+1}}{e!}\left(\prod_{i=1}^l\frac{\nu_i^{\nu_i-1}}{\nu_i!}\right)[\text{pt}].
\end{align*}

\noindent Now the summation is over only the partitions of length $l\leq 2g$.

Notice that 
\[
\sC(x^{d-e},\tau^l(x))=\sum_{\substack{\nu\vdash (d-e):\\ \text{length}(\nu)=l}} \frac{l!}{\text{Aut}(\nu)}\left(\prod_{i=1}^l\frac{\nu_i^{\nu_i-1}}{\nu_i!}\right),
\]
so first summing over the possible lengths of a partition, our expression simplifies to
\begin{align*}
\lambda_g\lambda_{g-1}\virclass{\abrub{g}{d}{d}}=\frac{1}{d^{d-1}}\sum_{e=1}^dQ_g^e\frac{e^{e+1}}{e!}\sum_{l=1}^{2g}\frac{(2g+d-l-1)!}{(2g-l)!}\frac{(-d)^{l}}{l!}\sC(x^{d-e},\tau^l(x))[\text{pt}].
\end{align*}
These numbers fit into a generating function
\[
G(y)=\sum_{g=1}^\infty\frac{1}{d^{d-1}}\sum_{e=1}^dQ_g^e\frac{e^{e+1}}{e!}\sum_{l=1}^{2g}\frac{(2g+d-l-1)!}{(2g-l)!}\frac{(-d)^{l}}{l!}\sC(x^{d-e},\tau^l(x))y^{2g}.
\]
This is the same generating function that appears in Proposition~1 of \cite{FP00}, which states that
\[
G(y)=\log\left(\frac{dy/2}{\sin(dy/2)}\right).
\]
Setting $d=1$ gives our series for  $(\lambda_g\lambda_{g-1}\virclass{\abrub{g}{1}{1}})$.  Since the coefficients in the series for general $d$ differ only by a factor of $d^{2g}$, the result follows.\qed
%
%
\section{Total Length 3: Genus 1}

In this section we compute $P_1(d;\alpha_2+\alpha_3)= A_2\alpha_2^2+A_3\alpha_3^2+B\alpha_2\alpha_3$ (Theorem \ref{theorem:T=3}).  We obtain the coefficients of $\alpha_2^2, \alpha_3^2$ terms by pullback from the coefficients of theorem \ref{theorem:T=2}. Computing the coefficient of $\alpha_2\alpha_3$ requires an auxiliary localization computation.

%
%
\subsection{Computing $A_2, A_3$.}

Let $\pi_3: \cms_{1,3}\to \cms_{1,2}$ denote the morphism forgetting the third marked point.
By Theorem \ref{thm:Qpoly}, $A_2\alpha_2^2=P_1(d;\alpha_2+0)= \pi_3^\ast(P_1(\alpha_2;\alpha_2))$. The length two polynomial is computed in Theorem \ref{theorem:T=2}, and 
$$
\lambda_1 P_1(\alpha_2;\alpha_2)= \frac{1}{24}[pt.]\alpha_2^2. 
$$
It follows that
\begin{equation} \label{A2}
A_2=\pi_3^\ast(\overline{\text{D}}_{1,0}(\emptyset|1,2))= 
\overline{\text{D}}_{1,0}(\emptyset|1,2,3)+\overline{\text{D}}_{1,0}(3|1,2)= \psi_1- \overline{\text{D}}_{1,0}(2|1,3).
\end{equation}
The last equality in (\ref{A2}) is a simple relation in the tautological ring. On $\cms_{1,1}$ we have $\lambda_1=\psi_1$.  Using the comparison lemma to pull-back the class $\psi_1$, one pulls this equality back to $\cms_{1,3}$ and obtains:
\[
\lambda_1 = \psi_1 - \overline{\text{D}}_{1,0}(3|1,2) - \overline{\text{D}}_{1,0}(2|1,3) - \overline{\text{D}}_{1,0}(\emptyset|1,2,3).
\]
Multiplying  by $\lambda_1$ and recalling that $\lambda_1^2=0$,  we get the relation:
\begin{equation}\label{finrel}
0 = \psi_1\lambda_1 - \lambda_1\overline{\text{D}}_{1,0}(3|1,2) - \lambda_1\overline{\text{D}}_{1,0}(2|1,3) - \lambda_1\overline{\text{D}}_{1,0}(\emptyset|1,2,3).
\end{equation}
Since multiplication by $\lambda_1$ gives an isomorphism between $\sR^1(\sM^{rt}_{1,3})$ and $\sR^1(\cms_{1,3})$, relation (\ref{finrel}) holds without the factor of $\lambda_1$ on rational tails.

The coefficient $A_3$ is obtained either by  repeating the same argument using the forgetful map $\pi_2$ or by exploiting the equivariance of the polynomial with respect to the automorphism of $\cms_{1,3}$ exchanging the second and the third mark. 
%
\subsection{Computing $B$.}

For any positive integer $d \geq 2$,  consider the equivariant cohomology class ${[\sK(d)]}\in \Chow^\ast_{\C^\ast}(\rma{1,\{2,3\}}{d})$ which lifts the class
$$\lambda_1\left(\prod_{i=2}^{3} ev_i^{\ast}(pt.)\right) br^\ast([H]^{d-2})$$ by requiring the marked points to map to $0$ and also at least $d-2$ ramification over $0$.
Applying  (\ref{gpvl}) and pushing forward via the stabilization morphism  $\mu:\cms_{1,\{2,3\}}(d)\to\cms_{1,3}$ (where the fully ramified point is remembered as the first mark), we obtain a description of $\mu_\ast({[\sK(d)]})$ in terms of rational functions in the equivariant parameter $t$ with coefficients in the cohomology of the (pushforwards of) fixed loci of the moduli space of maps (as in (\ref{auxT2})). We denote by $R(d)$ the $\frac{1}{t}$ coefficient of the localization of ${[\sK(d)]}$ and observe that $\mu_\ast(R(d))=0 \in R^\ast(\cms_{1,3})$. We carry out this computation explicitly for $d=2$. We introduce some more notation for the localization graphs: genus is in the superscript and marked points are in the subscript of the corresponding part of the partition $\nu$.  
Table \ref{table:deg2} lists the localization data.

\begin{table}
\begin{tabular}[c]{| l | c | l | l |} 
\hline
Graph &  $\frac{\T m(\Gamma)}{\B |\A_\Gamma|}$& Fixed Locus & Normal Contribution\\ \hline \hline
1. $\T\Gamma_L(2_{2,3}^1)$ & $\frac{1}{2}$ & $\cms_{1,3}$ & $\frac{ t}{\frac{t}{2}-\psi_N}\frac{2}{t^2}\frac{t-\lambda_1}{t}t^2$ \\
2. $\Gamma_R(2_{2,3})$ & 1 & $\cms_{0,3}\times\abrub{1}{2}{2}$& $\frac{t}{\frac{t}{2}-\psi_N'}\frac{1}{-t-\psi}\frac{2}{t^2}\frac{1}{t}t^2$\\
3. $\Gamma_L(1_{2,3}^1+1)$ & 1 & $\cms_{1,3}\times\abrub{0}{2}{1+1}$ & $\frac{t}{t-\psi_N}\frac{1}{-t-\psi}\frac{1}{t^2}\frac{t-\lambda_1}{t}t^3$\\
4. $\Gamma_L(1_2^1+1_3)$, $\Gamma_L(1_3^1+1_2)$ & 1 & $2\left(\cms_{1,2}\times\abrub{0}{2}{1+1}\right)$ &$\frac{t}{t-\psi_N}\frac{1}{-t-\psi}\frac{1}{t^2}\frac{t-\lambda_1}{t}t^2$\\
5. $\Gamma_L(1^1+1_{2,3})$ & 1 & $\cms_{0,3}\times\cms_{1,1}\times\abrub{0}{2}{1+1}$ & $\frac{t}{t-\psi_N'}\frac{t}{t-\psi_N}\frac{1}{-t-\psi}\frac{1}{t^2}\frac{t-\lambda_1}{t^2}t^2$\\
6. $\Gamma_R(1_{2,3}+1)$ & 1 & $\cms_{0,3}\times\abrub{1}{2}{1+1}$& $\frac{t}{t-\psi_N'}\frac{1}{-t-\psi}\frac{1}{t^2}\frac{1}{t}t^3$\\
7. $\B\Gamma_R(1_2+1_3)$ & 1 & $\abrub{1}{2}{1+1}$ &$\frac{1}{-t-\psi\B}\frac{1}{t^2}t^2$\\
\hline
\end{tabular}
\vspace{0.3cm}
\caption{Degree 2 Localization Data}
\label{table:deg2}
\end{table}

\emph{Degree 2}: 
\begin{align*}
R(2) =&4\psi_N\lambda_1[\cms_{1,3}]-\psi_N\lambda_1[\cms_{1,3}]\times[\abrub{0}{2}{1+1}] - 2\lambda_1[\cms_{1,2}]\times[\abrub{0}{2}{1+1}]\\
-&\lambda_1[\cms_{0,3}]\times[\abrub{1}{2}{1+1}] - \lambda_1[\abrub{1}{2}{1+1}].
\end{align*}
The contributions arise from graphs 1, 3, 4, 6, and 7.  The graphs 2 and 6 do not contribute. 

Next we pushforward  $R(2)$ and obtain a relation on the moduli space of curves which we call $L(2)$. The genus zero class $[\abrub{0}{2}{1+1}]$ pushes forward to the Hurwitz number $\text{H}^0_{2,1+1}=1$ times the class of a point. We substitute the appropriate boundary expression for the genus $0$ rubber classes:
\[
L(2):=\mu_\ast(R(2))= 4\psi_1\lambda_1- \psi_1\lambda_1 - \lambda_1\overline{\text{D}}_{1,0}(2|1,3)-\lambda_1\overline{\text{D}}_{1,0}(3|1,2)-\lambda_1\text{S}(1_{2,3}+1) - \lambda_1\mu_\ast[\abrub{1}{2}{1+1}]=0.
\]
This simplifies to
\begin{equation}\label{deg2prepush}
3\psi_1\lambda_1 -\lambda_1\overline{\text{D}}_{1,0}(2|1,3)-\lambda_1\overline{\text{D}}_{1,0}(3|1,2)=\lambda_1\text{S}(1_{2,3}+1) + \lambda_1\mu_\ast[\abrub{1}{2}{1+1}].
\end{equation}
The pushforward of $R(2)$ still contains classes that are neither standard generators nor the pushforward  of the rubber classes we are interested in. We next express such classes in terms of standard classes.

\noindent{\emph{The $S$-Loci}}

\begin{figure}
	\centering
		\includegraphics[width=0.55\textwidth]{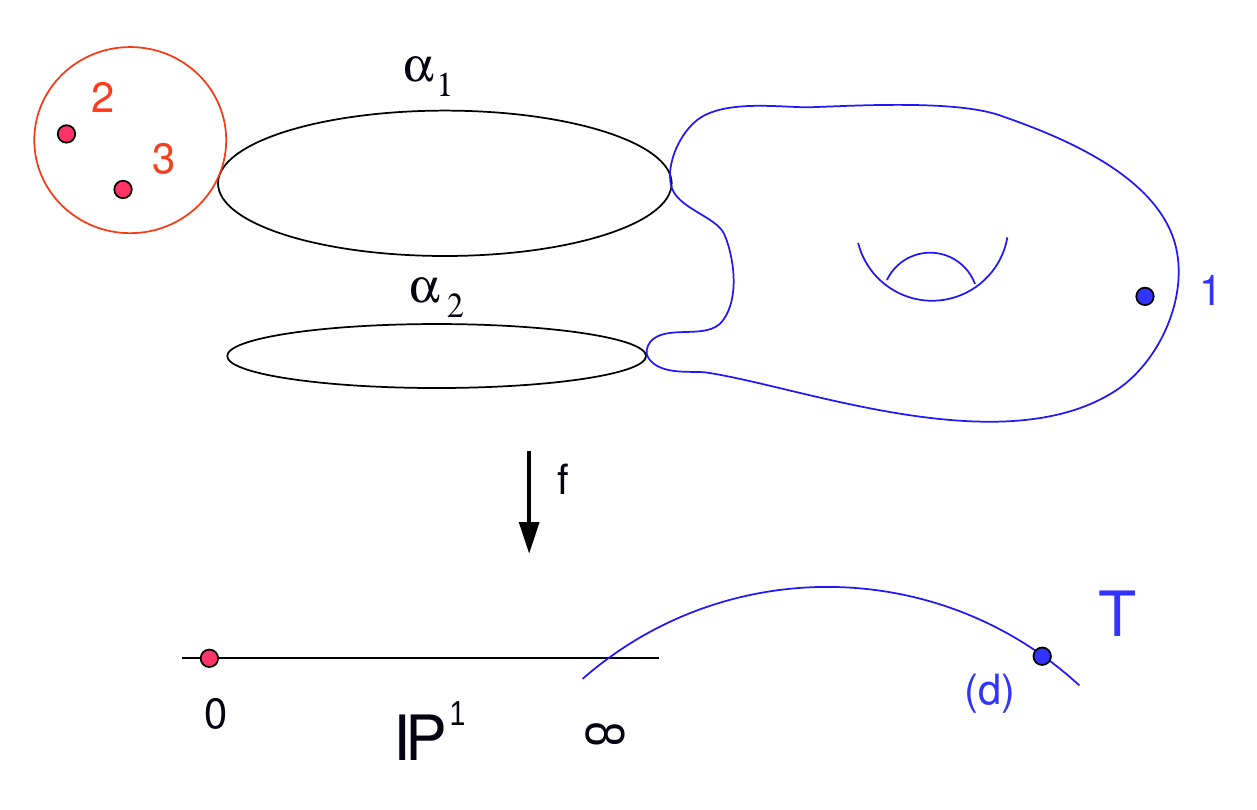}
	\caption{The fixed locus in $\rma{1,\{2,3\}}{d}$ that pushes forward to $S((\alpha_2)_{2,3}+\alpha_3)$}
	\label{fig:Slocus}
\end{figure}

We denote by $S((\alpha_2)_{2,3}+\alpha_3)$  the push forward through $\mu$ of the class $[\cms_{0,3}]\times[\abrub{1}{d}{\alpha_2+\alpha_3}]$ given by the fixed locus corresponding to the graph $\Gamma_R((\alpha_2)_{2,3}+\alpha_3)$, and similarly $S({\alpha_2}+(\alpha_3)_{2,3})$(Figure \ref{fig:Slocus}).    For $i=1,2,3$, let $\pi_i:\cms_{1,3}\to\cms_{1,2}$ be the forgetful morphism and $\sigma_i:\cms_{1,2}\to\cms_{1,3}$  the section, each corresponding to the $i$-th marked point.  

Note we have:
\begin{align}
\lambda_1 S((\alpha_2)_{2,3}+{\alpha_3})=\lambda_1{\sigma_2}_\ast{\pi_3}_\ast\mu_\ast[\abrub{1}{d}{\alpha_2+\alpha_3}]= \lambda_1(A_2'\alpha_2^2 +A_3'\alpha_3^2 +B'\alpha_2\alpha_3).
\end{align}
From (\ref{A2}) we can compute:
\begin{align}
A_2' &= {\sigma_2}_\ast{\pi_3}_\ast \pi_3^\ast(\overline{\text{D}}_{1,0}(\emptyset|1,2))= 0 \nonumber \\
A_3' &= {\sigma_2}_\ast{\pi_3}_\ast \pi_2^\ast(\overline{\text{D}}_{1,0}(\emptyset|1,3))= \overline{\text{D}}_{1,0}(1|2,3).
\end{align}
To determine $B'$ we ``push-push" $L(2)$:

${\pi_3}_\ast L(2)$:
\begin{align*}
2\lambda_1= \lambda_1{\pi_3}_\ast\text{S}(1_{2,3}+1)+\lambda_1{\pi_3}_\ast\mu_\ast[\abrub{1}{2}{1+1}];
\end{align*}
${\sigma_2}_\ast{\pi_3}_\ast L(2)$:
\begin{align*}
2\lambda_1\overline{\text{D}}_{1,0}(1|2,3)= 2\lambda_1\text{S}(1_{2,3}+1).
\end{align*}
This gives the linear equation in the coefficients of $S((\alpha_2)_{2,3}+\alpha_3)$:
\begin{align}\label{pushpush2}
\overline{\text{D}}_{1,0}(1|2,3)=\overline{\text{D}}_{1,0}(1|2,3)+B',
\end{align}
determining $B'=0$. Now we can use $L(2)$ to solve for $B$. Plugging in (\ref{deg2prepush}) the boundary expressions for $A_2,A_3$ and $\text{S}(1_{2,3}+1)$, we obtain the linear equation on the coefficients:
\begin{align}\label{almostthere2}
3\psi_1\lambda_1 -\overline{\text{D}}_{1,0}(2|1,3)-\overline{\text{D}}_{1,0}(3|1,2) = \overline{\text{D}}_{1,0}(1|2,3)+\psi_1-\overline{\text{D}}_{1,0}(2|1,3) +\psi_1-\overline{\text{D}}_{1,0}(3|1,2)+B,
\end{align}
which determines:
\begin{equation}
B=\psi_1 - \overline{\text{D}}_{1,0}(1|2,3).
\end{equation}

\subsection{Consistency check: degree $3$ relation.}

In this section we present an auxilary virtual localization relation among our classes. This serves the twofold purpose of giving a consistency check of the previous computations and of providing the reader that may be interested in applying this techniques with another example of it. We check that the relation $L(3):= \mu_\ast R(3)$ is compatible with the result of theorem \ref{theorem:T=3}. The localization data is contained in table \ref{table:deg3}.

\begin{table}
\begin{adjustwidth}{-0.5in}{-1in}
\begin{tabular}[c]{| l | c | l | l |}
\hline
Graph &  $\frac{\T m(\Gamma)}{\B |\A_\Gamma|}$ & Fixed Locus& Normal Contribution \\ \hline \hline
1. \T $\Gamma_L(3_{2,3}^1)$ & $\frac{1}{3}$ &$\cms_{1,3}$& $\frac{t}{\frac{t}{3}-\psi_N}\frac{3^3}{3!t^3}\frac{t-\lambda_1}{t}4t^3$\\
2. $\Gamma_R(3_{2,3})$ & 1&$\cms_{0,3}\times\abrub{1}{3}{3}$&$\frac{t}{\frac{t}{3}-\psi_N'}\frac{1}{-t-\psi}\frac{3^3}{3!t^3}\frac{1}{t}2t^3$\\
3. $\Gamma_L(2_{2,3}^1+1)$ &1 &$\cms_{1,3}\times\abrub{0}{3}{2+1}$&$\frac{t}{\frac{t}{2}-\psi_N}\frac{1}{-t-\psi}\frac{2}{t^3}\frac{t-\lambda_1}{t}3t^4$ \\
4.  $\Gamma_L(2_2^1+1_3)$, $\Gamma_L(2_3^1+1_2)$ & 1 & $2\left(\cms_{1,2}\times\abrub{0}{3}{2+1}\right)$& $\frac{t}{\frac{t}{2}-\psi_N}\frac{1}{-t-\psi}\frac{2}{t^3}\frac{t-\lambda_1}{t}3t^3$\\
5. $\Gamma_L(2^1+1_{2,3})$ & 1&$\cms_{0,3}\times\cms_{1,1}\times\abrub{0}{3}{2+1}$& $\frac{t}{\frac{t}{2}-\psi_N}\frac{t}{t-\psi_N'}\frac{1}{-t-\psi}\frac{2}{t^3}\frac{t-\lambda_1}{t^2}3t^3$\\
6. $\Gamma_L(2_{2,3}+1^1)$ & 1 &$\cms_{0,3}\times\cms_{1,1}\times\abrub{0}{3}{2+1}$&$\frac{t}{\frac{t}{2}-\psi_N'}\frac{t}{t-\psi_N}\frac{1}{-t-\psi}\frac{2}{t^3}\frac{t-\lambda_1}{t^2}3t^3$\\
7. $\Gamma_L(2_2+1_3^1)$, $\Gamma_L(2_3+1_2^1)$ & 1 &$2\left(\cms_{1,2}\times\abrub{0}{3}{2+1}\right)$& $\frac{t}{t-\psi_N}\frac{1}{-t-\psi}\frac{2}{t^3}\frac{t-\lambda_1}{t}3t^3$ \\
8. $\Gamma_L(2+1^1_{2,3})$ & 1 &$\cms_{1,3}\times\abrub{0}{3}{2+1}$& $\frac{t}{t-\psi_N}\frac{1}{-t-\psi}\frac{2}{t^3}\frac{t-\lambda_1}{t}\frac{3}{2}t^4$ \\
9. $\Gamma_R(2_{2,3}+1)$ & 1 &$\cms_{0,3}\times\abrub{1}{3}{2+1}$& $\frac{t}{\frac{t}{2}-\psi_N'}\frac{1}{-t-\psi}\frac{2}{t^3}\frac{1}{t}t^3$\\ 
10. $\Gamma_R(2_2+1_3)$& 1 & $\abrub{1}{3}{2_2+1_3}$& $\frac{1}{-t-\psi}\frac{2}{t^3}t^3$\\
11. $\Gamma_R(2_3+1_2)$& 1 & $\abrub{1}{3}{2_3+1_2}$& $\frac{1}{-t-\psi}\frac{2}{t^3}t^3$\\
12. $\Gamma_R(2+1_{2,3})$ & 1 &$\cms_{0,3}\times\abrub{1}{3}{2+1}$& $\frac{t}{t-\psi_N'}\frac{1}{-t-\psi}\frac{2}{t^3}\frac{1}{t}\frac{1}{2}t^4$\\
13. $\Gamma_L(1^1_{2,3}+1+1)$ & $\frac{1}{2}$&$\cms_{1,3}\times\abrub{0}{3}{1+1+1}$&$\frac{t}{t-\psi_N}\frac{1}{-t-\psi}\frac{1}{t^3}\frac{t-\lambda_1}{t}2t^5$\\ 
14. $\Gamma_L(1^1_2+1_3+1)$, $\Gamma_L(1^1_3+1_2+1)$ & 1 &$2\left(\cms_{1,2}\times\abrub{0}{3}{1+1+1}\right)$&$\frac{t}{t-\psi_N}\frac{1}{-t-\psi}\frac{1}{t^3}\frac{t-\lambda_1}{t}2t^4$\\ 
15. $\Gamma_L(1^1+1_{2,3}+1)$ & 1 &$\cms_{0,3}\times\cms_{1,1}\times\abrub{0}{3}{1+1+1}$&$\frac{t}{t-\psi_N'}\frac{t}{t-\psi_N}\frac{1}{-t-\psi}\frac{1}{t^3}\frac{t-\lambda_1}{t^2}2t^4$\\ 
16. \B$\Gamma_L(1^1+1_2+1_3)$ & 1&$\cms_{1,1}\times\abrub{0}{3}{1+1+1}$&$\frac{t}{t-\psi_N}\frac{1}{-t-\psi\B}\frac{1}{t^3}\frac{t-\lambda_1}{t}2t^3$\\ 
\hline
\end{tabular}
\end{adjustwidth}
\vspace{0.3cm}
\caption{Degree 3 Localization Data: Fixed Locus Information}
\label{table:deg3}
\end{table}

\begin{align*}
L(3)=0= &54\psi_N\lambda_1[\cms_{1,3}] - 24\psi_N\lambda_1[\cms_{1,3}]\times[\abrub{0}{3}{2+1}]-24\lambda_1[\cms_{1,2}]\times[\abrub{0}{3}{2+1}] \\
-&12\lambda_1[\cms_{1,2}]\times[\abrub{0}{3}{2+1}]-3\psi_N\lambda_1[\cms_{1,3}]\times[\abrub{0}{3}{2+1}]-4\lambda_1[\cms_{0,3}]\times[\abrub{1}{3}{2+1}]\\
-&2\lambda_1[\abrub{1}{3}{2_2+1_3}]-2\lambda_1[\abrub{1}{3}{2_3+1_2}]-\lambda_1[\cms_{0,3}]\times[\abrub{1}{3}{2+1}]\\
+&\psi_N\psi\lambda_1[\cms_{1,3}]\times[\abrub{0}{3}{1+1+1}]
+4\psi\lambda_1[\cms_{1,2}]\times[\abrub{0}{3}{1+1+1}]\\-&2\lambda_1[\cms_{0,3}]\times[\cms_{1,1}]\times[\abrub{0}{3}{1+1+1}]
-2\lambda_1[\cms_{1,1}]\times[\abrub{0}{3}{1+1+1}]
\end{align*}
Graphs 1, 3, 4, and 7 through 16 contribute to the above expression.  The graphs 2, 5, and 6 do not contribute. Note that in graphs 10 and 11 we carefully keep track of the location of the marks. While in principle we should do this for every single fixed locus, we allow ourselves the sloppiness of forgetting such information from our notation when it is irrelevant to the computation.

{\emph{Genus $0$ Rubber Classes.}} 

The genus zero rubber classes contribute $[\abrub{0}{3}{2+1}]=1[\text{pt}]$ and $\psi[\abrub{0}{3}{1+1+1}]=2[\text{pt}]$ for the terms corresponding to the fixed loci 3, 4, 7, 8, 12 and 13.  The term $2\lambda_1[\cms_{0,3}]\times[\cms_{1,1}]\times[\abrub{0}{3}{1+1+1}]$ corresponding to locus 14 pushes forward to 0.    The last term, containing $[\abrub{0}{3}{1+1+1}]$ and no $\psi$ class, 
does not push forward to zero: the $\abrub{0}{3}{1+1+1}$ factor 
maps isomorphically onto the $\PP^1(=\overline{M}_{0,4})$ component of $\overline{\text{D}}_{1,0}(\emptyset|1,2,3)$.  This locus thus pushes forward to $-2\lambda_1\overline{\text{D}}_{1,0}(\emptyset|1,2,3)$.

Substituting the boundary expressions for the genus $0$ rubber classes into $L(3)$ and simplifying, one obtains:

\begin{align}\label{almostthere3}
2\lambda_1\mu_\ast[\abrub{1}{3}{2_2+1_3}]+2\lambda_1\mu_\ast[\abrub{1}{3}{2_3+1_2}] =& 30\psi_1\lambda_1 - 12\lambda_1\overline{\text{D}}_{1,0}(2|1,3) - 12\lambda_1\overline{\text{D}}_{1,0}(3|1,2)  \nonumber \\
& \hspace{-2cm}- 2\lambda_1\overline{\text{D}}_{1,0}(\emptyset|1,2,3)- 4\lambda_1\text{S}(2_{2,3}+1)-\lambda_1\text{S}(2+1_{2,3})\nonumber \\
\end{align}

Using the results of the previous two subsections and forgetting $\lambda_1$, (\ref{almostthere3}) becomes:

\begin{align}\label{there3}
10(\psi_1- \overline{\text{D}}_{1,0}(2|1,3))+10(\psi_1- \overline{\text{D}}_{1,0}(3|1,2))+8 (\psi_1- \overline{\text{D}}_{1,0}(1|2,3))  = \nonumber \\ 30\psi_1 - 12\overline{\text{D}}_{1,0}(2|1,3) - 12\overline{\text{D}}_{1,0}(3|1,2) - 2\overline{\text{D}}_{1,0}(\emptyset|1,2,3)  - 6\overline{\text{D}}_{1,0}(1|2,3)\nonumber, 
\end{align}
which immediately simplifies to relation (\ref{finrel}).

\subsection{Genus 1 polynomial for arbitrary total length: Corollary \ref{cor:arbitraryT}}

By Corollary \ref{cor:pullback} our computation of $P_{1,3}(d;\alpha_2,\alpha_3)$ determines the genus 1 polynomial $P_{1,T}(d;\alpha_2,\ldots,\alpha_T)$ for arbitrary total length.  This is computation is performed by applying parts (i) and (ii) of Theorem \ref{thm:Qpoly}.  First, pull back the coefficients of $\alpha_2^2$ and $\alpha_2\alpha_3$ from $P_{1,3}(d;\alpha_2,\alpha_3)$ iteratively through maps forgetting marked points.  Then, apply the equivariance from part (i).  The resulting polynomial is as stated in Corollary \ref{cor:arbitraryT}.

\appendix

\section{Obstruction theory details} \label{app:obs}

In this appendix, we will check that the definitions of Section~\ref{sec:OT} do indeed yield obstruction theories and that these obstruction theories are perfect.

\subsection{Calculations} \label{sec:calc}

In this section we develop some explicit descriptions of the sheaves $\uT$ and $\uT'$.  These descriptions will not be needed directly in the sequel, but we will need to know that a large part of $\uT(CS,J)$ and $\uT'(CS,J)$ can be constructed from quasi-coherent sheaves on $C$ and on $S$.  The techniques used in this section are very similar for $\fE$ and $\fE'$.  We will generally only give the arguments in detail for $\fE$ and then explain what modifications are necessary when repeating them for $\fE'$.  


Let $C$ be the family of curves over $S$ associated to an $S$-point of $\oM_{\rel}(\sP/\BGm)$.  We begin by relating the abelian group stack $\uT$ on $CS$, defined in Section~\ref{sec:obthy1}, to some more familiar sheaves.  Our methods here are adapted from \cite[Section~5]{L02}.

\subsubsection{The sheaf $\uA$}

Suppose that the map $S \rightarrow \tsT$ induced from an $S$-point of $\oM_{\rel}(\sP/\BGm)$ factors through a map $S \xrightarrow{f} W$ where $W$ is smooth over $\tsT$.  Let $A(CS,J)$ be the category of predeformable completions of the diagram
\begin{equation*} \xymatrix{
    C \ar[r] \ar[d] \ar@/^15pt/[rr] & C[p^\ast J] \ar@{-->}[r] \ar[d] & \tsP_W \ar[d] \\
    S \ar[r] \ar@/_15pt/[rr] & S[J] \ar@{-->}[r] & W .
  }
\end{equation*}
Extend this, in the usual way, to an abelian group stack $\uA(CS,J)$ on $CS$.

\begin{lem} \label{lem:A}
  Assume that the map $S \rightarrow \tsT$ (resp.\ $S \rightarrow \sT^2$) associated to an $S$-point of $\oM_{\rel}(\sP/\BGm)$ (resp.\ of $\oM_{\rel}(\sP)$) factors through $f : S \rightarrow W$ for some $W$ smooth over $\tsT$ (resp.\ over $\sT^2$).  There is an exact sequence 
  \begin{equation}\label{eqn:23}
    0 \rightarrow \pi^\ast \uT_{W/\tsT}(S,J) \rightarrow \uA(CS,J) \rightarrow \uT(CS,J) \rightarrow 0 
  \end{equation}
  of abelian group stacks on $CS$.
\end{lem}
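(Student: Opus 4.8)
The plan is to realise $\uA(CS,J)$ as $\uT(CS,J)$ enlarged by exactly the data needed to lift the target map from $\tsT$ to $W$, and to identify that extra data with a torsor under the relative tangent sheaf $T_{W/\tsT}$. The map $\uA(CS,J)\to\uT(CS,J)$ of the lemma is the one induced by composing a predeformable completion with the projections $\tsP_W\to\tsP$ and $W\to\tsT$; since $\tsP_W=\tsP\times_{\tsT}W$, this is a morphism of abelian group stacks on $CS$, compatible with restriction in $CS$ and with the additive cofibration in $J$, so it remains only to compute its kernel and to check that it is an epimorphism.

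The crucial step is an exact description of the fibres. Using $\tsP_W=\tsP\times_{\tsT}W$, a predeformable completion of the $\uA$-diagram over some $UV\in CS$ is the same as a pair consisting of (i) a lift $\tilde h\colon V[J_V]\to W$, along $W\to\tsT$, of the base map $V[J_V]\to\tsT$ underlying the $\uT$-completion, together with (ii) the predeformable completion $a\colon U[p^\ast J]\to\tsP$ itself: the map $U[p^\ast J]\to\tsP_W$ is then forced to be the pair consisting of $a$ and the composite $U[p^\ast J]\to V[J_V]\xrightarrow{\tilde h}W$, and the fibre-product compatibility this must satisfy is automatic because, by construction, $\tilde h$ is a lift of the $\uT$-completion's base map along $W\to\tsT$. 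Moreover this pair is predeformable exactly when $a$ is: the projection $\tsP_W\to\tsP$ is the base change of the smooth morphism $W\to\tsT$ along $\tsP\to\tsT$, hence an isomorphism on the fibres over corresponding points, so it leaves the node structure of the target expansion — and therefore the predeformability condition — untouched.

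It follows that the fibre of $\uA(CS,J)\to\uT(CS,J)$ over a completion $a$ is the set of lifts $\tilde h$ of the associated base map along $W\to\tsT$, compatible with $f\colon S\to W$. By the formal criterion of smoothness for $W\to\tsT$, such lifts exist \'etale-locally on $S$, hence locally in $CS$, and they form a torsor under $T_{W/\tsT}(S,J)=\Gamma(S,f^\ast T_{W/\tsT}\otimes J)$ (Section~\ref{sec:unobs}); this gives the surjectivity of $\uA\to\uT$ as a map of stacks on $CS$. Over the zero section of $\uT$ the base map is trivial, the trivial lift $S[J]\to S\xrightarrow{f}W$ is available, and the fibre is canonically the group $T_{W/\tsT}(S,J)$; since this depends only on the base $V$, the kernel sheaf on $CS$ is the pullback $\pi^\ast\uT_{W/\tsT}(S,J)$ along $\pi\colon CS\to\et(S)$ of Section~\ref{sec:site}. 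Together with surjectivity and the compatibilities noted above, this yields the exact sequence~\eqref{eqn:23} of abelian group stacks on $CS$ in the sense of~\cite{gs}. The argument for $\uT'$ and $\oM_{\rel}(\sP)$ is word for word the same, with $\sT^2$ in place of $\tsT$ and $\tsP_W=\tsP\times_{\sT^2}W$ for $W$ smooth over $\sT^2$.

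The main obstacle is really just the care demanded by the identification in the second paragraph: that a predeformable completion into $\tsP_W$ carries no more and no less information than a predeformable completion into $\tsP$ plus a lift of the base map to $W$. This rests entirely on the equality $\tsP_W=\tsP\times_{\tsT}W$ and on the smoothness of $W\to\tsT$ — the former so that base-changing the universal expansion does not disturb its fibres and hence leaves predeformability intact, the latter so that lifts of the base map exist locally and the set of them is a torsor rather than merely a pseudo-torsor. Once this is granted, the remainder is the standard torsor bookkeeping for square-zero extensions along a smooth morphism, exactly as in the treatment of the canonical obstruction theory in Section~\ref{sec:unobs}.
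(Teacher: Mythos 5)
Your proof is correct and follows the same strategy as the paper's: the map $\uA\to\uT$ is surjective by the formal criterion of smoothness for $W\to\tsT$, and the kernel sections over $UV$ are precisely the completions $V[J]\to W$ of the zero tangent vector, which is $\pi^\ast\uT_{W/\tsT}(S,J)$ by definition. You have simply made explicit the content of the paper's ``by inspection,'' in particular that $\tsP_W=\tsP\times_{\tsT}W$ forces the fibre of $\uA\to\uT$ over a completion $a$ to be exactly the set of lifts $\tilde h$, and that predeformability into $\tsP_W$ over $W$ is the same condition as predeformability into $\tsP$ over $\tsT$ because $\tsP_W\times_W V[J]=\tsP\times_{\tsT}V[J]$.
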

\begin{proof}
  Write $\sZ$ for $\tsT$ (resp.\ for $\sT^2$).  Since $W$ is smooth over $\sZ$, the map $\uA \rightarrow \uT$ is surjective, by the formal criterion for smoothness.  The sections of the kernel over $UV$ are the completions of the diagram
  \begin{equation*} \xymatrix{
      V \ar[r]^f \ar[d] & W \ar[d] \\
      V[J] \ar[r]^0 \ar@{-->}[ur] & \tsT .
    }
  \end{equation*}
  By inspection, this is the same as $\pi^\ast \uT_{W/\sT}(S,J)$.
\end{proof}

Now let $g : C \rightarrow \tsP$ be the map induced from the $S$-point of $\oM_{\rel}(\sP/\BGm)$.  Recall that $\tsP$ and $\tsT$ can be given log.\ structures, and the map $\tsP \rightarrow \tsT$ can be extended to a log.\ smooth morphism.  Let $\Omega_{\tsP / \tsT}(\log)$ be the sheaf of relative log.\ differentials.

\begin{remark}
  When we consider an $S$-point of $\oM_{\rel}(\sP)$, we will need to use the relative cotangent complex $\Omega_{\tsP / \sT^2}(\log)$, which is perfect in degrees $[0,1]$, since the map $\tsP \rightarrow \sT^2$ is not representable.  This is the only noteworthy difference between the arguments used in this section to study $\uT$ and the analogous ones that apply to $\uT'$.
\end{remark}

It is possible to describe $\uA(CS,J)$ fairly explicitly, following \cite[Sections~1 and~5]{L02}.  Recall that a chart of ``first kind'' of $CS$ is a $UV \in CS$ such that $U$ does not contain any ``essential nodes''---nodes whose images in $\tsP$ meets the singular locus.  A chart of ``second kind'' is a $UV \in CS$ such that $UV$ is a small \'etale neighborhood of an essential node.  Here ``small'' means that the morphism from $U$ to $\tsP$ admits a certain standard description, which we recall below.

Over a chart $UV$ of second kind, the commutative diagram
\begin{equation*} \xymatrix{
    U \ar[r] \ar[d] & \tsP_W \ar[d] \\
    V \ar[r] & W
  }
\end{equation*}
can be obtained by \'etale localization from a commutative diagram of rings,
\begin{equation*} \xymatrix{
      \cO_V[x,y] / (xy - t) & \ar[l] \cO_W[u,v] / (uv - w) \\
      \cO_V \ar[u] & \cO_W \ar[u] \ar[l],
    } 
\end{equation*}
where $w \mapsto t^m$, $u \mapsto x^m$, and $v \mapsto y^m$.  By \cite[Lemma~1.12]{L02}, to extend this to a predeformable diagram
\begin{equation*} \xymatrix{
      \cO_V[x,y] / (xy - t) & (\cO_V + J)[x,y] / (xy - t) \ar[l] & \ar@{-->}[l] \ar@/_15pt/[ll] \cO_W[u,v] / (uv - w) \\
      \cO_V \ar[u] & \cO_V + J \ar[l] \ar[u] & \cO_W \ar[u] \ar@{-->}[l] \ar@/^15pt/[ll]
    } 
\end{equation*}
is the same as to give a commutative diagram
\begin{equation*} \xymatrix{
    J \tensor_{\cO_V} \cO_U & \ar[l] \Omega_{\tsP_W}(\log u, \log v) \\
    J \ar[u] & \ar[l] \Omega_W(\log w) \ar[u] .
  }
\end{equation*}
In \cite[Equation~(1.10)]{L02}, the group of such diagrams is denoted $\Hom(f^\ast \Omega_{\tsP_W/W}, J)^\dagger$, at least when $U$ dominates $V$ (cf.\ the beginning of Section~\ref{sec:extension}).

According to whether we are looking at a chart of first or second kind, we write $\Omega_{\tsP_W}^\dagger$ (resp.\ $\Omega_W^\dagger$) for $\Omega_{\tsP_W}(\log u, \log v)$ or $\Omega_{\tsP_W}$ (resp.\ $\Omega_W(\log w)$ or $\Omega_W$).  These do \emph{not} glue together to form sheaves on $CS$, but the collection of diagrams 
\begin{equation} \label{eqn:20} \xymatrix{
    J \tensor_{\cO_V} \cO_U & \ar[l] \Omega^\dagger_{\tsP_W} \\
    J \ar[u] & \ar[l] \Omega^\dagger_W \ar[u] .
  }
\end{equation}
does form a sheaf on $CS$, which is precisely $\uA(CS,J)$.  

\subsubsection{The sheaf $\uB$}

We continue to assume that there is a factorization of $S \rightarrow \tsT$ through a scheme $W$ that is smooth over $\tsT$.  Let $\uB'(CS,J)$ be the sheaf of abelian groups (or abelian group stack when we are studying $\oM_{\rel}(\sP)$) whose value on $UV \in CS$ is 
\begin{equation*}
  \Hom(g^\ast \Omega_{\tsP / \tsT}(\log)_U, p^\ast (J)_U)
\end{equation*}
where $p : C \rightarrow S$ is the projection.  Note that
\begin{equation*}
  \uB'(CS,J) = i_\ast \ch(\uHom(g^\ast \Omega_{\tsP / \tsT}(\log), p^\ast J)) .
\end{equation*}
We can identify $\uB'(CS,J)$ with a subsheaf of the sheaf whose value on $UV$ is the collection of extensions
\begin{equation}\label{eqn:19} \xymatrix{
    C \ar[r] \ar[d] \ar@/^15pt/[rr]^g & \ar[d] C[p^\ast J] \ar@{-->}[r] & \tsP_W \ar[d] \\
    S \ar[r] \ar@/_15pt/[rr]_f & S[J] \ar[r]^0 & W .
  }
\end{equation}
This is, in turn, a subsheaf of $\uA(CS,J)$, so we get an injective map
\begin{equation*}
  \uB'(CS,J) \rightarrow \uA(CS,J) .
\end{equation*}
We define $\uB(CS,J)$ to be the quotient sheaf.  


Over each chart of first or second kind, there is a diagram
\begin{equation} \label{eqn:24} \xymatrix{
    0 \ar[r] & \Omega_{W}^\dagger \tensor_{\cO_V} \cO_U \ar[r] & \Omega_{\tsP_W}^\dagger \ar[r] & \Omega_{\tsP_W/W}(\log) \ar[r] & 0 \\
    0 \ar[r] & \Omega_W^\dagger \ar[u] \ar[r] & \Omega_{W}^\dagger \ar[r] \ar[u] & 0 \ar[u] \ar[r] & 0
  }
\end{equation}
with exact rows.  The square on the right induces the map of sheaves $\uB'(CS,J) \rightarrow \uA(CS,J)$.  

\begin{lemma} \label{lem:B}
  There is an exact sequence
  \begin{equation} \label{eqn:21}
    0 \rightarrow i_\ast \uHom(g^\ast \Omega_{\tsP / \tsT}(\log), p^\ast J) \rightarrow \uA(CS, J) \rightarrow \uB(CS, J) \rightarrow 0 
  \end{equation}
  and on a chart $UV$ of either first or second kind, we have $\uB(CS, J) = \Hom(\Omega_W^\dagger, J)$.
\end{lemma}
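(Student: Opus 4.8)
The plan is to dispose of the exactness of~\eqref{eqn:21} by observing that it holds by construction, and then to concentrate on the genuine content of the lemma, the computation of $\uB(CS,J)$ over a chart. For the first point: $\uB(CS,J)$ was \emph{defined} to be the cokernel of the injection $\uB'(CS,J)\hookrightarrow\uA(CS,J)$ produced from the right-hand square of~\eqref{eqn:24}, and the discussion preceding the lemma identifies $\uB'(CS,J)$ with $i_\ast\uHom(g^\ast\Omega_{\tsP/\tsT}(\log),p^\ast J)$ (the $\uHom$ read via the $\ch$-construction), so~\eqref{eqn:21} is exact on the nose. The one caveat, as noted in the remark preceding the lemma, is that when one works with $\oM_{\rel}(\sP)$ the relative log cotangent complex $\Omega_{\tsP/\sT^2}(\log)$ sits in degrees $[0,1]$, so $i_\ast\uHom(g^\ast\Omega_{\tsP/\sT^2}(\log),p^\ast J)$ and hence $\uB(CS,J)$ are abelian group stacks rather than sheaves; the sequence is then exact in that sense, and every step below goes through verbatim with ``sheaf'' replaced by ``abelian group stack''.

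For the local identification I would fix a chart $UV$ of first or second kind and, after shrinking, take $U$ (hence $V$) affine. By the explicit description recorded in~\eqref{eqn:20}, a section of $\uA(CS,J)$ over $UV$ is a commutative square whose horizontal arrows are homomorphisms $a\colon\Omega_{\tsP_W}^\dagger\to J\otimes_{\cO_V}\cO_U$ and $b\colon\Omega_W^\dagger\to J$; let $\Phi$ be the map reading off the lower arrow $b\in\Hom(\Omega_W^\dagger,J)$. If $b=0$, commutativity forces the composite $\Omega_W^\dagger\otimes_{\cO_V}\cO_U\to\Omega_{\tsP_W}^\dagger\xrightarrow{a}J\otimes_{\cO_V}\cO_U$ to vanish, so by the exactness of the top row of~\eqref{eqn:24} the map $a$ factors uniquely through $\Omega_{\tsP_W}^\dagger\to\Omega_{\tsP_W/W}(\log)$ via some $\gamma\colon\Omega_{\tsP_W/W}(\log)\to J\otimes_{\cO_V}\cO_U$, and such a $\gamma$ is precisely a section of $\uB'(CS,J)$ over $UV$; conversely the image of $\uB'(CS,J)$ in $\uA(CS,J)$ consists exactly of the squares with $b=0$. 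Hence $\ker\Phi=\uB'(CS,J)(UV)$.

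What remains, and is the least formal point, is surjectivity of $\Phi$: given $b\colon\Omega_W^\dagger\to J$, one must extend the induced homomorphism $\Omega_W^\dagger\otimes_{\cO_V}\cO_U\to J\otimes_{\cO_V}\cO_U$ along the inclusion $\Omega_W^\dagger\otimes_{\cO_V}\cO_U\hookrightarrow\Omega_{\tsP_W}^\dagger$ to a homomorphism out of $\Omega_{\tsP_W}^\dagger$. Since $\tsP_W\to W$ is log smooth away from the essential nodes, $\Omega_{\tsP_W/W}(\log)$ is locally free there (perfect in degrees $[0,1]$ in the $\sP$ case), so this inclusion is locally split and the obstruction to a global extension over $U$ lies in $\Ext^1_{\cO_U}(\Omega_{\tsP_W/W}(\log),J\otimes_{\cO_V}\cO_U)$, which vanishes because $U$ is affine. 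Thus $\Phi$ descends to an isomorphism $\uA(CS,J)(UV)/\uB'(CS,J)(UV)\cong\Hom(\Omega_W^\dagger,J)$; and since charts of first and second kind form a basis of $CS$ and $\Phi$ is already surjective on each of them, no sheafification is needed and $\uB(CS,J)(UV)=\Hom(\Omega_W^\dagger,J)$, as claimed.

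I expect the main obstacle to be not any single computation but the bookkeeping with the ``$\dagger$''-objects: one must know that charts of first and second kind cover $CS$ (Li's local structure theory), that the isomorphisms $\Phi$ over different charts are compatible with the transition data of $\Omega_W^\dagger$, $\Omega_{\tsP_W}^\dagger$ and $\Omega_{\tsP_W/W}(\log)$ appearing in~\eqref{eqn:24} so that the local descriptions patch, and that on sufficiently small charts the presheaf cokernel already agrees with $\uB$ — which is exactly where the affineness of $U$ and the vanishing of the above $\Ext^1$ are doing real work. The $\oM_{\rel}(\sP)$ version requires the same care with these objects now being two-term complexes, but introduces no genuinely new difficulty.
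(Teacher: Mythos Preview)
Your argument is correct and closely parallels the paper's, but with one notable simplification. The paper applies $\uHom(-,\pi^\ast J)$ to~\eqref{eqn:24} on the site $CS$ itself and must then invoke a separate auxiliary lemma (Lemma~\ref{lem:splitting}) to show that $\uExt^1(i_\ast g^\ast\Omega_{\tsP/\tsT}(\log),\pi^\ast J)=0$ as a sheaf on $CS$; that lemma is proved by an explicit splitting argument that tracks the triple $(i^\ast Q,j^\ast Q,\alpha)$ encoding a sheaf on $CS$. You instead compute sections of the presheaf quotient directly on an affine chart $UV$, where the required $\Ext^1$-vanishing is simply $H^1(U,-)=0$ for a locally free sheaf on an affine scheme, and then observe that surjectivity on every basic chart means no sheafification correction is needed. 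Your route is more elementary and self-contained for this particular lemma; the paper's formulation has the advantage that Lemma~\ref{lem:splitting} is stated for an arbitrary sheaf $F$ on $C$ with $\uExt^1(F,p^\ast J)=0$, making it reusable and keeping the whole argument in the sheaf language on $CS$ rather than descending to affine charts.
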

\begin{proof}
  Apply $\Hom(-, \pi^\ast J)$ to the seqence~\eqref{eqn:24} over a chart of either kind.  We get the exact sequence
  \begin{equation*}  
    0 \rightarrow \uHom(i_\ast g^\ast \Omega_{\tsP / \tsT}(\log), \pi^\ast J) \rightarrow \uA(CS, J) \rightarrow \uHom(\pi^\ast \Omega_W^\dagger, \pi^\ast J) \rightarrow \Ext^1(i_\ast g^\ast \Omega_{\tsP / \tsT}(\log), \pi^\ast J)
  \end{equation*}
  Since $i$ is a closed embedding, $i_\ast$ is left adjoint to $i^\ast$.  This implies that
  \begin{equation*}
    \uHom(i_\ast g^\ast \Omega_{\tsP / \tsT}(\log), \pi^\ast J) = \uHom(g^\ast \Omega_{\tsP / \tsT}(\log), p^\ast J) .
  \end{equation*}
  On the other hand 
  \begin{equation*}
    \uHom(\pi^\ast \Omega_W^\dagger, \pi^\ast J) = \uHom(\Omega_W^\dagger, \pi_\ast \pi^\ast J) = \uHom(\Omega_W^\dagger, J) .
  \end{equation*}
  Finally, since $g^\ast \Omega_{\tsP / \tsT}(\log)$ is a vector bundle on $C$, Lemma~\ref{lem:splitting} below implies that $$\Ext^1(i_\ast g^\ast \Omega_{\tsP / \tsT}(\log), \pi^\ast J) =0.$$  Taken together, these facts imply that we have an exact sequence
  \begin{equation*}
    0 \rightarrow i_\ast \uHom(g^\ast \Omega_{\tsP / \tsT}(\log), p^\ast J) \xrightarrow{\alpha} \uA(CS, J) \rightarrow \pi^\ast \uHom(\Omega_W^\dagger, J) \rightarrow 0 .
  \end{equation*}
  As we have defined $\uB(CS, J)$ to be the cokernel of the map $\alpha$ above, we get the desired isomorphism $\uB(CS, J) \simeq \pi^\ast \uHom(\Omega_W^\dagger, J)$.
\end{proof}

To complete the proof of Lemma~\ref{lem:B}, we must prove
\begin{lemma} \label{lem:splitting}
  Suppose that $F$ is a sheaf on $C$ and $J$ is a sheaf on $S$ such that $\uExt^1(F, p^\ast J) = 0$.  Then $\uExt^1(i_\ast F, \pi^\ast J) = 0$.
\end{lemma}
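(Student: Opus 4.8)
The plan is to use the description from Section~\ref{sec:site} of a sheaf on $CS$ as a triple $(A,B,\varphi\colon A\to p^\ast B)$, under which $i^\ast$ and $j^\ast$ become the exact functors $(A,B,\varphi)\mapsto A$ and $(A,B,\varphi)\mapsto B$, while $i_\ast A=(A,0,0)$, $j_! B=(0,B,0)$, and $\pi^\ast J=(p^\ast J,J,\id)$. Applied to $\sF=\pi^\ast J$, the exact sequence~\eqref{eqn:22} reads
\begin{equation*}
  0\to j_!J\to\pi^\ast J\to i_\ast p^\ast J\to 0 .
\end{equation*}
Applying $\uExt^\bullet_{CS}(i_\ast F,-)$ yields a long exact sequence of $\uExt$ sheaves on $CS$ in which $\uExt^1_{CS}(i_\ast F,\pi^\ast J)$ is flanked by $\uExt^1_{CS}(i_\ast F,j_!J)$ and $\uExt^1_{CS}(i_\ast F,i_\ast p^\ast J)$, the left arrow being pushout along $j_!J\hookrightarrow\pi^\ast J$. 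So it suffices to prove that $\uExt^1_{CS}(i_\ast F,i_\ast p^\ast J)=0$ and that this pushout map is zero; exactness then forces $\uExt^1_{CS}(i_\ast F,\pi^\ast J)=0$.

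For the first vanishing, note that $i_\ast$ is exact and fully faithful and, because $j^\ast$ is exact and $j^\ast i_\ast=0$, its essential image is closed under extensions: in an exact sequence $0\to i_\ast G'\to\sF\to i_\ast G''\to 0$ the sheaf $j^\ast\sF$ vanishes, so $\sF$ has zero $S$-component and hence $\sF=(A,0,0)=i_\ast A$, making the sequence $i_\ast$ applied to an extension on $C$. Carried out locally on $CS$, this identifies $\uExt^1_{CS}(i_\ast F,i_\ast G)$ with $i_\ast\uExt^1_C(F,G)$ for every sheaf $G$ on $C$; with $G=p^\ast J$ and the hypothesis $\uExt^1_C(F,p^\ast J)=0$ this gives $\uExt^1_{CS}(i_\ast F,i_\ast p^\ast J)=0$.

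For the second vanishing I would compute, again locally on $CS$, an arbitrary extension $0\to j_!J\to\sF\to i_\ast F\to 0$: applying the exact functors $i^\ast$ and $j^\ast$ forces $\sF=(F,J,\psi)$ for a unique $\psi\colon F\to p^\ast J$, with the subobject $j_!J$ and the quotient $i_\ast F$ the evident ones. Its pushout along $j_!J\hookrightarrow\pi^\ast J$ is, computed componentwise, the extension $0\to\pi^\ast J\to(F\oplus p^\ast J,\,J,\,(\psi,\id))\to i_\ast F\to 0$, and the morphism of triples $\bigl((\id_F,-\psi),0\bigr)\colon i_\ast F\to(F\oplus p^\ast J,J,(\psi,\id))$ is a splitting of it — so every class in $\uExt^1_{CS}(i_\ast F,j_!J)$ maps to zero, as required. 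The only genuine subtlety, and the main (if mild) obstacle, is to confirm that these computations with triples really do compute the $\uExt$ sheaves on $CS$, i.e.\ that $\uExt^1$ is the sheafification of $U\mapsto\Ext^1(-|_U,-|_U)$, while keeping the exactness and adjunction properties of $i_\ast,i^\ast,j_!,j^\ast$ straight.
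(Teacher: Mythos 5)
Your proof is correct, and it reorganizes the argument along a genuinely different formal route from the paper's. The paper works directly with an arbitrary extension $Q$ of $i_\ast F$ by $\pi^\ast J$: it applies $i^\ast$ to obtain an extension of $F$ by $p^\ast J$ on $C$, splits it using the hypothesis, observes that $Q$ is then encoded by a single map $\psi\colon F\to p^\ast J$ (the off-diagonal part of the structure map of the triple), and finally adjusts the splitting of $i^\ast Q$ by $-\psi$ to trivialize this map and hence $Q$ itself. You instead invoke the exact sequence~\eqref{eqn:22} with $\sF=\pi^\ast J$ and split the desired vanishing into two independent pieces. Your computation of $\uExt^1_{CS}(i_\ast F, i_\ast p^\ast J)$ via closure of the image of $i_\ast$ under extensions is exactly the content of the paper's step of splitting $i^\ast Q$, and your exhibited splitting $\bigl((\id_F,-\psi),0\bigr)$ of the pushout is precisely the paper's adjustment of the splitting by $-\psi$ — so at bottom the same triple manipulations are doing the work. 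What your repackaging buys is a cleaner separation of roles: the hypothesis is used only for the vanishing of $\uExt^1(i_\ast F, i_\ast p^\ast J)$, while the vanishing of the pushout map $\uExt^1(i_\ast F, j_!J)\to\uExt^1(i_\ast F, \pi^\ast J)$ holds unconditionally. Your closing worry about whether $\uExt^1$ is computed by local $\Ext^1$ computations and sheafifying is a fair thing to flag but is standard, and the paper implicitly grants it as well by reducing to a local splitting question.
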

\begin{proof}
  Let
  \begin{equation*}
    0 \rightarrow \pi^\ast J \rightarrow Q \rightarrow i_\ast F \rightarrow 0
  \end{equation*}
  be an extension of $i_\ast F$ by $\pi^\ast J$.  We must show that this sequence splits locally in $CS$.  

Since this is a local problem and $\uExt^1(F, p^\ast J) = 0$, we can assume that the exact sequence
  \begin{equation*}
    0 \rightarrow p^\ast J \rightarrow i^\ast Q \rightarrow F \rightarrow 0
  \end{equation*}
  is split.  The choices of splittings form a torsor under $\Hom(F, p^\ast J)$.  Recall now that $Q$ is determined by giving a map $\alpha : i^\ast Q \rightarrow p^\ast j^\ast Q$.  Choosing an isomorphism $i^\ast Q \simeq F \times p^\ast J$ and noting that the map $p^\ast J \rightarrow p^\ast i^\ast Q = p^\ast J$ is necessarily the identity, we see that the extension $Q$, together with the splitting of $i^\ast Q$, is determined by a map $F \rightarrow p^\ast J$.  The action of $\Hom(F, p^\ast J)$ on the collection of pairs $(Q, i^\ast Q \simeq p^\ast J \times F)$ is by addition on the map $F \rightarrow p^\ast J$.  Choosing the splitting of $i^\ast Q$ correctly, therefore, we can ensure that the map $i^\ast Q \rightarrow p^\ast j^\ast Q$ factors throught the projection of $i^\ast Q$ on $p^\ast J$, which guarantees that $Q \simeq \pi^\ast J \times i_\ast F$ as an extension of $\pi^\ast J$ by $i_\ast F$.
\end{proof}

\subsection{Local finite presentation} \label{sec:lfp}

\begin{lemma} \label{lem:lfp-T}
  Suppose that $A$ is a commutative ring, $J$ is an $A$-module, and the pair $(A, J)$ is the filtered colimit of pairs $(A_i, J_i)$.  Put $V_i = \Spec A_i$ and assume that we are given compatible predeformable families of predeformable maps 
\begin{equation*} \xymatrix{
  U_i \ar[r] \ar[d] & \tsP \ar[d] \\
  V_i \ar[r] & \sS
} \end{equation*}
where $\sS = \tsT$ (resp $\sS = \sT^2$).  Then the natural map
\begin{equation*}
  T(U V, J) = \varinjlim_i T(U_i V_i, J_i) \qquad \qquad \text{(resp. } T'(UV,J) \rightarrow \varinjlim_i T'(U_i V_i, J_i) \text{)}
\end{equation*}
is an equivalence.
\end{lemma}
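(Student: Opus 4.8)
The plan is to reduce the assertion to the explicit descriptions of $\uT$ and $\uT'$ obtained in Section~\ref{sec:calc}, and then to observe that each building block appearing there commutes with filtered colimits by the standard limit formalism for algebraic stacks locally of finite presentation.

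First I would make the usual reductions. Since $T(UV,J)$ is the value at $UV$ of a stack on $CS$ that is compatible with \'etale localization, and since both $T$ and the formation of filtered colimits turn a finite \'etale cover into a finite limit (and finite limits commute with filtered colimits of groupoids), it suffices to treat the case where $UV$ is a chart of first or second kind in the sense recalled in Section~\ref{sec:calc}. Localizing further on the affine scheme $S=\Spec A$, I may assume the map $S\to\tsT$ (resp.\ $S\to\sT^2$) factors through a scheme $W$ smooth over $\tsT$ (resp.\ over $\sT^2$), as in the hypotheses of Lemma~\ref{lem:A}. Because $\tsP$, $\tsT$ and $\sT^2$ are locally of finite presentation, the scheme $W$, the curve $C$, the chart $UV$, the factorization $S\to W$, and all the coherent sheaves that enter below are pulled back from a finite stage; after discarding finitely many indices I may therefore arrange $W_i=W$, $C_i=C\times_S S_i$, $U_i=U\times_V V_i$, with every relevant sheaf pulled back from a level $i_0$ and the whole diagram over $S$ obtained from level $i_0$ by base change. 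The colimit then becomes a colimit along base change $\Spec A_i\to\Spec A_{i_0}$ with $J=\varinjlim_i J_i$.

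Next I would feed in Lemmas~\ref{lem:A} and~\ref{lem:B}, which on such a chart exhibit $\uA(CS,J)$ as an extension of $\Hom(\Omega_W^\dagger,J)$ by $i_\ast\uHom(g^\ast\Omega_{\tsP/\tsT}(\log),p^\ast J)$, and then present $\uT(CS,J)$ as the quotient abelian group stack $\uA(CS,J)/\pi^\ast\uT_{W/\tsT}(S,J)$. Each of these three pieces is obtained by applying Deligne's $\ch$ to a two-term $\uHom$-complex out of a perfect (indeed locally free or coherent) complex, into $p^\ast J$ or $J$, possibly cut down by the finitely many linear conditions encoded in the $\dagger$-decoration at the essential nodes. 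Now $\Hom(F,-)$ commutes with filtered colimits when $F$ is of finite presentation; passing to a $\dagger$-decorated subsheaf or quotient, applying $\ch$ to a two-term complex, and forming the quotient by $\pi^\ast\uT_{W/\tsT}$ all commute with filtered colimits of the argument; and an extension of abelian group stacks each of which commutes with filtered colimits again does so, since filtered colimits are exact. Assembling these facts gives the claim for $T$. For $T'$ the argument is identical after replacing $\tsT$ by $\sT^2$ and the locally free sheaf $\Omega_{\tsP/\tsT}(\log)$ by the perfect two-term complex $\Omega_{\tsP/\sT^2}(\log)$ of the Remark following Lemma~\ref{lem:A}; $\uHom$ out of a perfect complex is still computed by a bounded complex of finitely presented sheaves, and the fact that $T'$ is an abelian group stack rather than a sheaf of groups is harmless, since filtered $2$-colimits of abelian group stacks are formed objectwise.

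The step I expect to be the main obstacle is not any single estimate but the bookkeeping needed to see that the \emph{predeformability} condition causes no trouble. Predeformability is not an open condition, so one cannot argue that a predeformable completion over $\Spec A$ is already predeformable at a finite stage by an openness-of-loci argument; the point is instead that Li's $\dagger$-construction (recalled in Section~\ref{sec:calc}, following \cite[Lemma~1.12]{L02}) repackages ``predeformable completion'' as the purely linear-algebraic datum of a $\dagger$-decorated $\Hom$ between coherent sheaves, to which the limit arguments above apply verbatim. Checking with care that the descent of $U$, $V$, $C$, $W$ to a finite stage can be arranged compatibly with the commutativity and predeformability constraints --- so that even the $\dagger$-decorations are pulled back from that stage --- is where the remaining (routine) work lies.
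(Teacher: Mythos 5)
Your proposal is correct but takes a genuinely different route from the paper. The paper argues directly: by local finite presentation of $\tsP$ over $\sS$ and of $\sS$ itself, a completion of the diagram over $(U[p^\ast J], V[J])$ descends to a finite stage $(U_i[p_i^\ast J_i], V_i[J_i])$; the only remaining point is that predeformability of the limit diagram forces predeformability at a finite stage, and this is handled by recalling that predeformability is equivalent to the \'etale-local existence of the standard form $u\mapsto\alpha x^\ell$, $v\mapsto\beta y^\ell$ with $\alpha,\beta$ units and $\alpha\beta\in A$ --- a finite collection of data and equations, each of which descends to a large enough $i$ by the usual limit formalism. You instead route the argument through the explicit descriptions of $\uT$ and $\uT'$ from Section~\ref{sec:calc} (Lemmas~\ref{lem:A} and~\ref{lem:B}), presenting $\uT$ as built from $\uHom$-sheaves of finitely presented modules and their $\dagger$-decorated variants, and then observing that each piece commutes with filtered colimits. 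Both arguments are valid, and both ultimately rest on the same mechanism of replacing the non-open predeformability condition with a finitary presentation --- the paper via the standard form, you via Li's $\dagger$-repackaging --- but the paper's version is shorter and more self-contained, avoiding the local factorization through a smooth $W$ and the machinery of Lemmas~\ref{lem:A}--\ref{lem:B}, whereas yours is conceptually closer to how $\uT$ will actually be used downstream at the cost of extra bookkeeping (e.g.\ descending $W$ and the chart decomposition to a finite stage and checking that finite limits in groupoids commute with the filtered colimit).
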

\begin{proof}
  By the local finite presentation of $\tsP$ over $\sS$ and the local finite presentation of $\sS$, a diagram
  \begin{equation*} \xymatrix{
    U[p^\ast J] \ar[r] \ar[d] & \tsP \ar[d] \\
    V[J] \ar[r] & \sS
  } \end{equation*}
  is induced from a diagram 
  \begin{equation*} \xymatrix{
    U_i[p_i^\ast J_i] \ar[r] \ar[d] & \tsP \ar[d] \\
    V_i[J_i] \ar[r] & \sS .
  } \end{equation*}
  We must check that if the former is predeformable, then so is the latter.  We recall that predeformability is equivalent to the \'etale local existence of a standard form for the diagram:
  \begin{equation*} \xymatrix{
    \Spec A[x,y] / (xy - t) & & \ar[ll] \Spec A[u,v] / (uv - s) \\
    & A \ar[ur] \ar[ul]
  } \end{equation*}
  in which $u \mapsto \alpha x^\ell$, $v \mapsto \beta y^\ell$, where $\alpha$ and $\beta$ are units, and $\alpha \beta \in A$.  But a scheme \'etale over $U$ can be induced from a scheme \'etale over $U_i$ for a sufficiently large $i$, and then the factorization, $\alpha$, $\beta$, and the various equations necessary to demonstrate predeformability will all appear after passing to a large enough $i$.  
\end{proof}

\begin{prop} \label{prop:lfp-E}
  Suppose that $A$ is a commutative ring, $J$ is an $A$-module, and the pair $(A, J)$ is the filtered colimit of pairs $(A_i, J_i)$.  Assume that we are given compatible maps $\Spec A_i \rightarrow \oM_{\rel}(\sP/\BGm)$ (resp.\ $\Spec A_i \rightarrow \oM_{\rel}(\sP)$).  Then $\fE(A, J) = \varinjlim \fE(A_i, J_i)$ (resp.\ $\fE'(A,J) = \varinjlim \fE'(A_i, J_i)$.
\end{prop}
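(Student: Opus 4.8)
The plan is to bootstrap from Lemma~\ref{lem:lfp-T}, which is the finite‑presentation statement for the sheaves $\uT$ and $\uT'$ evaluated object‑by‑object on $CS$, to the same statement for the categories of torsors $\fE(S,J)$ and $\fE'(S,J)$. Recall that $\fE(S,J)$ is the category of $\uT(CS,J)$‑torsors on $CS$, which by stability is equivalent to a $1$‑groupoid; its set of isomorphism classes and its automorphism groups are computed by the cohomology of $\uT$ on $CS$ in degrees $1$ and $0$ respectively, while for $\uT'$ (an abelian group stack, since $\tsP\to\sT^2$ is not representable) one needs also $H^2$ of the underlying sheaves appearing in the presentation of $\uT'$ furnished by the analogues of Lemmas~\ref{lem:A} and~\ref{lem:B}. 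So it is enough to show that, for a filtered system $(A_i,J_i)$ with colimit $(A,J)$ carrying the data of Proposition~\ref{prop:lfp-E}, the cohomology groups $H^j(CS,\uT)$ and $H^j(CS,\uT')$ commute with the colimit for $j\le 2$.

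First I would fix a finite cover of $CS$ by charts of first and second kind in the sense of Section~\ref{sec:calc}; such a finite cover exists because $C\to S$ is proper and $S=\Spec A$ is affine, and finite intersections of charts are again objects of $CS$ that are ``small'' in the sense of Section~\ref{sec:site}, hence acyclic for abelian sheaves (by the local‑sections arguments behind Lemmas~\ref{lem:exactness} and~\ref{lem:closed}). On such a cover the \v{C}ech complex computes the cohomology of $\uT$ (resp.\ $\uT'$), and each of its terms is a finite product of groups $\uT(W_{a_0\cdots a_k},J)$ (resp.\ $\uT'(\cdots)$), which commutes with the filtered colimit by Lemma~\ref{lem:lfp-T}; since the \v{C}ech differentials are finite limits, the \v{C}ech cohomology then commutes with the colimit as well. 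To know that this finite cover really computes the full torsor category — and, for $\uT'$, to make the relevant sheaves visibly cohomologically bounded — I would invoke the exact sequences of Lemmas~\ref{lem:A} and~\ref{lem:B}, which (after factoring $S\to\tsT$ through a scheme $W$ smooth over $\tsT$) present $\uT$ through the $i_\ast$ of a coherent sheaf on the curve $C$, the $\pi^\ast$ of a coherent sheaf on $S$, and the chart‑local sheaf $\uB$; together with $R\pi_\ast i_\ast=Rp_\ast$ and $R\pi_\ast\pi^\ast=\mathrm{id}$ from Section~\ref{sec:site}, with $\dim C/S=1$, and with the affineness of $S$, this reduces the whole cohomology computation to coherent cohomology in degrees $\le 1$ on the proper curve $C$ and in degree $0$ on $S$.

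The real work, and the step I expect to be the main obstacle, is making all of these choices \emph{uniform} across the filtered system. Since the moduli problems $\oM_{\rel}(\sP/\BGm)$ and $\oM_{\rel}(\sP)$ are locally of finite presentation, the given map $\Spec A\to\oM_{\rel}(\sP/\BGm)$ — and hence the curve $C/S$, the map $C\to\tsP$, the factorization of $S\to\tsT$ through a finitely presented scheme $W$ smooth over $\tsT$ (which exists \'etale‑locally on $S$, which is harmless because $\fE$ is an \'etale stack and \'etale covers and their nerves are compatible with filtered colimits), the chosen finite cover of $CS$ by charts, and the standard‑form presentations of the charts of second kind (finitely many equations each) — all descend to a finite stage $i_0$. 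Reindexing so $i_0$ is initial, for every $i\ge i_0$ the data on $CS_i$ are obtained by the flat base change $S_i\to S_{i_0}$, and the sheaves $\uA,\uB,\uB',\uT,\uT'$ together with their exact sequences are compatible under this base change by their explicit chart descriptions in Section~\ref{sec:calc}. Once this finite‑presentation bookkeeping is carried out, assembling the long exact cohomology sequences of Lemmas~\ref{lem:A} and~\ref{lem:B} and using the standard fact that the coherent cohomology of a finitely presented sheaf on a proper finitely presented scheme commutes with filtered colimits of the affine base gives the conclusion: $H^j(CS,\uT)$ and $H^j(CS,\uT')$ commute with the colimit for $j\le 2$, hence $\fE(A,J)=\varinjlim\fE(A_i,J_i)$ and $\fE'(A,J)=\varinjlim\fE'(A_i,J_i)$.
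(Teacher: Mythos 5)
Your core strategy — bootstrap from Lemma~\ref{lem:lfp-T} and handle the finite‑presentation bookkeeping for the auxiliary data ($W$, the chosen charts, etc.) by a standard limit argument — is the same as the paper's, and that part of the plan is sound. But the route you take through cohomology introduces a genuine gap. You translate the statement about the torsor categories $\fE$ and $\fE'$ into the statement that $H^j(CS,\uT)$ and $H^j(CS,\uT')$ commute with the filtered colimit, and you propose to prove this by computing \v{C}ech cohomology on a finite cover of $CS$ by charts and invoking Lemma~\ref{lem:lfp-T} term by term. For the \v{C}ech complex on a fixed cover to compute the sheaf cohomology of $\uT$ (or of the sheaves appearing in the presentation of $\uT'$), you need the members of the cover — and their finite fiber products — to be acyclic for the sheaves in question in the \'etale topology of $CS$. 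You assert this acyclicity by appeal to ``the local‑sections arguments behind Lemmas~\ref{lem:exactness} and~\ref{lem:closed},'' but those lemmas concern $R\pi_\ast$ for the projection $\pi:CS\to\et(S)$ and the behavior of closed embeddings; they say nothing about higher cohomology of abelian sheaves over a given object $UV$ of $CS$. Affine charts of first or second kind need not be acyclic for arbitrary abelian sheaves on $CS$, and the reduction via Lemmas~\ref{lem:A} and~\ref{lem:B} to coherent cohomology is itself part of what you would have to establish before this acyclicity could even be meaningfully stated. Until that step is supplied, the \v{C}ech argument does not actually control the cohomology you need.

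The paper avoids this entirely by never passing to cohomology. It argues directly with descent data: an object (or morphism, or $2$-morphism) of $\fE(S,J)$ is given by a finite diagram of objects $U_jV_j\in CS$ on which all the torsors in sight trivialize, together with gluing data; a standard limit argument descends the finite diagram and the (trivial) torsors to a finite stage $i$; and the remaining gluing maps are precisely sections of $\uT$ (resp.\ $\uT'$) over the $U_jV_j$, which descend by Lemma~\ref{lem:lfp-T}. No acyclicity, no \v{C}ech complex, and no appeal to Lemmas~\ref{lem:A}, \ref{lem:B} are required. If you want to keep a cohomological formulation, you would need to actually justify the acyclicity of the cover (for instance by first reducing, as in Section~\ref{sec:calc}, to quasi-coherent sheaves and then using affineness), but the descent‑datum argument is shorter and is what the paper does.
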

\begin{proof}
  We will show that for $i$ sufficiently large, the maps $\fE(A,J) \rightarrow \fE(A_i, J_i)$ are essentially surjective on objects, essentially surjective on morphisms, surjective on $2$-morphisms, and surjective on equality of $2$-morphisms.  We note first of all that this is a local problem in $S$ since $\fE$ is a stack on $S$.  We can therefore assume that $S$ is affine and in particular quasi-compact and quasi-separated.  Let $C$ be total space of the family of curves over $S$ induced from the map $S \rightarrow \oM_{\rel}(\sP/\BGm)$ (resp.\ $S \rightarrow \oM_{\rel}(\sP)$).  This is also quasi-compact and quasi-separated.

  Any object, morphism, etc.\ in $\fE(S, J)$ can be described by a descent datum on the site $CS$:  if $\xi$ is a $\uT(CS, J)$-torsor (resp.\ a morphism of $\uT(CS, J)$-torsors, resp.\ a pair of $\uT(CS, J)$-torsors) on $CS$ there is a finite diagram of \'etale maps $U_j V_j \rightarrow CS$ such that all of the torsors involved in the definition of $\xi$ become trivial when restricted to each $U_j V_j$, and the collection of $\xi_j$ obtained by restriction to the $U_j V_j$ constitute a descent datum for $\xi$.

  By a ``standard limit argument'' we can assume that the finite diagram of $U_j V_j$ over $CS$ is induced from a diagram of $U_{ij} V_{ij}$ over $C_i S_i$.  Enlarging $i$ as necessary, we can ensure that the maps among the $U_{ij} V_{ij}$ are all \'etale, and we can also ensure the various covering properties of the $U_j V_j$ are inherited from corresponding properties of the $U_{ij} V_{ij}$.  We can certainly descend any torsors involved among the $\xi_j$ to torsors on the $U_{ij} V_{ij}$, since the torsors are trivial on the $U_j V_j$.  Therefore the problem amounts to showing that morphisms between trivial torsors on the $U_j V_j$ are induced from morphisms between torsors on the $U_{ij} V_{ij}$, for $i$ sufficiently large.  But morphisms between trivialized torsors are precisely the same as sections of $\uT(CS, J)$ (resp.\ of $\uT'(CS,J)$), so the problem reduces to exactly what was shown in Lemma~\ref{lem:lfp-T}.
\end{proof}

In \cite{obs}, an obstruction theory satisfying the property demonstrated above is said to be \emph{locally of finite presentation}.

\subsection{Affine pushforward} \label{sec:obs-ax}


We illustrate the functoriality of $\fE(S,J)$ and $\fE'(S,J)$ with respect to affine morphisms in the $S$-variable.  After the discussion in Sections~\ref{sec:obthy1} and~\ref{sec:obthy2}, this is the only remaining axiom of an obstruction theory whose verification is non-trivial.  Suppose that $S \rightarrow S'$ is a morphism over $\oM_{\rel}(\sP/\BGm)$ or over $\oM_{\rel}^\ast(\sP)$.  Then we have a cartesian diagram
\begin{equation*} \xymatrix{
    C \ar[r] \ar[d] & C' \ar[d] \\
    S \ar[r] & S',
  }
\end{equation*}
in which $C$ and $C'$ are the corresponding families over curves over $S$ and $S'$, respectively, and the horizontal arrows are affine.  Let $f : CS \rightarrow C'S'$ be the morphism of sites induced from the diagram above.  Abusively, we also use $f$ to denote the maps $S \rightarrow S'$ and $C \rightarrow C'$.

Let $\uT$ denote the abelian group stack on $CS$ defined as in Section~\ref{sec:obthy1}, and let $\uT_{C'S'}$ denote the corresponding stack on $C'S'$.  Note that because infinitesimal extensions can be pushed out by affine morphisms, and algebraic stacks respect these pushouts, it follows that there is a natural map $f_\ast \uT \rightarrow \uT_{C'S'}$.  Define $\uT'$ and $\uT'_{C'S'}$ likewise.

\begin{lem} \label{lem:push-forward-torsor}
  \begin{enumerate}
  \item     If $P$ is a $T$-torsor on $CS$, then $f_\ast P$ is a $f_\ast T$-torsor on $C'S'$.
  \item  If $P$ is a $T'$-torsor on $CS$ then $f_\ast P$ is a $f_\ast T'$-torsor on $C'S'$.
  \end{enumerate}
\end{lem}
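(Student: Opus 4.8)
The statement is purely formal: we must show that pushing forward a torsor under an abelian group sheaf along a morphism of sites produces a torsor under the pushed-forward group sheaf. The only input specific to our situation is that $f$ is built from affine morphisms $S \to S'$ and $C \to C'$, which is exactly what makes the relevant pushforwards well-behaved. The plan is to isolate the general sheaf-theoretic fact and then verify its hypotheses for our sites.

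\textbf{Key steps.} First, recall that a $T$-torsor $P$ on $CS$ is, by definition, a sheaf on $CS$ equipped with an action of $T$ that is locally trivial: there is a cover $\{U_\lambda V_\lambda\}$ of $CS$ such that $P|_{U_\lambda V_\lambda}$ is a trivial $T|_{U_\lambda V_\lambda}$-torsor, i.e.\ isomorphic to $T|_{U_\lambda V_\lambda}$ acting on itself by translation. Second, I would observe that because $S \to S'$ and $C \to C'$ are affine, every object $U'V' \in C'S'$ pulls back to an object $U V = U' \times_{C'} C \to V' \times_{S'} S$ of $CS$, and a collection of objects covers $C'S'$ if and only if its pullback covers $CS$. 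This is the crucial compatibility: it says $f$ is a morphism of sites for which $f^\ast$ of a cover is a cover, so $f_\ast$ of a torsor-cover can be computed by pushing forward along the affine morphisms of schemes, where sheaf pushforward along an affine map is exact. Third, I would check that $f_\ast$ commutes with the group action: since $P$ carries a $T$-action $T \times P \to P$ (a map of sheaves on $CS$), applying $f_\ast$ and using that $f_\ast$ of the morphism $T \times P \to P$ lands in $f_\ast T \times f_\ast P \to f_\ast P$ (there is always a natural map $f_\ast(A \times B) \to f_\ast A \times f_\ast B$, and here it is an isomorphism because finite limits commute with $f_\ast$) gives an $f_\ast T$-action on $f_\ast P$. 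Fourth, and finally, I would verify local triviality of $f_\ast P$: over the cover of $C'S'$ obtained as above, $P|_{UV} \cong T|_{UV}$ as torsors, and applying $f_\ast$ over this affine situation—where $f_\ast$ is exact and preserves the product structure—yields $(f_\ast P)|_{U'V'} \cong (f_\ast T)|_{U'V'}$ compatibly with the action, which is precisely what it means for $f_\ast P$ to be an $f_\ast T$-torsor.

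\textbf{The main obstacle.} The only genuine subtlety is that $f_\ast$ on sheaves over a \emph{site} need not be exact or compatible with restriction in general; the argument hinges on the fact that the morphism of sites $f : CS \to C'S'$ is, when restricted to the relevant affine covers, governed by the affine pushforwards $S_{\mathrm{\acute et}} \to S'_{\mathrm{\acute et}}$ and $C_{\mathrm{\acute et}} \to C'_{\mathrm{\acute et}}$, for which pushforward is exact and commutes with pullback along \'etale maps. Concretely, I would use the description of a sheaf on $CS$ as a triple $(F, G, \varphi)$ with $F$ on $C$, $G$ on $S$, and $\varphi : F \to p^\ast G$ (as recalled just before Lemma~\ref{lem:exactness}), so that $f_\ast$ of a torsor is computed componentwise as affine pushforward of quasi-coherent (or \'etale) sheaves, reducing local triviality to the classical fact that affine pushforward of a torsor is a torsor. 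The $T'$ case is identical, except that $\uT'$ is an abelian group \emph{stack} rather than a sheaf; here ``torsor'' must be read in the $2$-categorical sense, but the same computation applies because $f_\ast$ still preserves the product structure and sends covers to covers, and the $2$-group structure of $f_\ast \uT'$ is inherited in the evident way—this is a routine upgrade of the sheaf argument and presents no new difficulty.
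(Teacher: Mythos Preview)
Your argument has a genuine gap in the ``local triviality'' step. You correctly observe that $f_\ast P$ is automatically an $f_\ast \uT$-\emph{pseudo}-torsor (the action and simple transitivity are preserved by $f_\ast$ since it is left exact), but the content of the lemma is that $f_\ast P$ has sections locally on $C'S'$, and this is not formal.

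Concretely: to show $f_\ast P$ is locally trivial, you must produce, for each sufficiently small $U'V'$ in $C'S'$, a section of $P$ over $f^{-1}(U'V') = UV$. Your fourth step seems to assume that a trivializing cover of $CS$ for $P$ arises from a cover of $C'S'$, but there is no reason \'etale neighborhoods in $CS$ should descend along the affine map $f$. What is actually needed is the vanishing $R^1 f_\ast \uT = 0$ (equivalently, $H^1(CS,\uT)=0$ after shrinking $C'S'$), and while you invoke ``affine pushforward is exact'', that statement applies only to \emph{quasi-coherent} sheaves. The sheaf $\uT(CS,J)$ is an abelian group stack of predeformable extensions on the mixed site $CS$, and it is not a priori of the form $i_\ast(\text{q.c.\ on }C)$ or $\pi^\ast(\text{q.c.\ on }S)$; your triple description $(F,G,\varphi)$ of sheaves on $CS$ does not by itself tell you that $\uT$ decomposes this way.

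The paper's proof supplies exactly this missing ingredient. After localizing so that $S'$ is strictly henselian and $C'$ affine, it separates into charts of first and second kind. On a chart of first kind, $\uT$ is an extension of $\pi^\ast \uT_{\tsT}(S,J)$ by $i_\ast \uT_{\tsP/\tsT}(C,p^\ast J)$, each piece having vanishing $H^1$ by affineness plus quasi-coherence. On a chart of second kind it invokes the filtrations of Lemmas~\ref{lem:A} and~\ref{lem:B}---indeed, this is the reason Section~\ref{sec:calc} exists---to pass from $\uT$ to $\uA$, and then to filter $\uA$ by $i_\ast$ and $\pi^\ast$ of honest quasi-coherent sheaves on $C$ and $S$. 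Only after this structural decomposition does the ``affine $\Rightarrow H^1=0$'' argument apply. Your sketch skips precisely this reduction.
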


Using the lemma, we can define the maps $\fE(S,J) \rightarrow \fE(S', f_\ast J)$ (resp.\ $\fE'(S,J) \rightarrow \fE(S',f_\ast J)$) by composing the pushforward $P \mapsto f_\ast P$ with the extension of structure group $f_\ast T \rightarrow T_{C'S'}$ (resp.\ $f_\ast T' \rightarrow T'_{C'S'}$).

\begin{proof}[Proof of Lemma \ref{lem:push-forward-torsor}.]
  The proofs of the two statements are similar, so we only prove the first in detail and then explain the modifications necessary for the second.

  Of course, $f_\ast P$ is a $f_\ast \uT$-pseudo-torsor, so the content of the lemma is that $f_\ast P$ admits a section locally in $C'S'$.  Let $L$ be a complex on $CS$ such that $\uT = \ch(L)$.  It will be equivalent to demonstrate that $R^1 f_\ast L = 0$.

  This is a local problem in $C'S'$ so we can assume that $S'$ is the spectrum of a henselian local ring with separably closed residue field.  Localizing in $C'$ as well, we can assume that $C'$ is affine and separate two possibilities depending on whether $C'S'$ is a chart of first or second kind.  In either case, $C$ and $S$ are also affine and $CS$ is a chart of the same kind.

  If $C'S'$ is of first kind, then $\tsP$ is smooth over $\tsT$ (and \'etale over $\sT^2$) near the image of $C'$.  Therefore $\uT(CS,J)$ is an extension of $\pi^\ast \uT_{\tsT}(S,J)$ by $i_\ast \uT_{\tsP / \tsT}(C,J)$ (and $\uT'(CS,J)$ is equal to $\pi^\ast \uT_{\tsT}(S,J)$).  We have
  \begin{gather*}
    H^1(CS, \pi^\ast \uT_{\tsT}(S,J)) = H^1(S, \uT_{\tsT}(S,J)) = 0 \\
    H^1(CS, i_\ast \uT_{\tsP / \tsT}(C,p^\ast J)) = H^1(C, \uT_{\tsP / \tsT}(C,p^\ast J)) = 0
  \end{gather*}
  since both $C$ and $S$ are affine and $\uT_{\tsT}(S,J)$ and both $\uT_{\tsP/\tsT}(C,J)$ are quasi-coherent.

  This leaves the case where $C'S'$ is of second kind to consider.  For simplicity, we will first factor the map $S' \rightarrow \tsT$ through some $W$ that is smooth over $\tsT$ in order to apply Lemma~\ref{lem:A}.

  We note that $R^p f_\ast \pi^\ast \uT_{W/\tsT}(S,J) = 0$ for $p > 0$ since we have
  \begin{equation*}
    H^p(CS, \pi^\ast \uT_{W/\tsT}(S,J)) = H^p(S, \uT_{W/\tsT}(S,J))
  \end{equation*}
  and $\uT_{W/\tsT}(S,J)$ is a quasi-coherent sheaf on the affine scheme $S$.  By Lemma~\ref{lem:A}, this reduces our problem to showing that $R^1 f_\ast \uA(CS, J) = 0$.

  We recall from Section~\ref{sec:calc} that $\uA(CS,J)$ can be represented as the sheaf of diagrams~\eqref{eqn:20}.  Note that $\Omega^\dagger_{\sP_W}$ and $\Omega_W^\dagger$ actually are sheaves (on $C$ and $S$, respectively) since we have restricted to the case where $C'S'$ (and therefore also $CS$) is of second kind.  We can now filter $\uA(CS,J)$ as
  \begin{equation*}
    0 \rightarrow i_\ast \uHom(\Omega_{\sP_W / W}(\log), p^\ast J) \rightarrow \uA(CS,J) \rightarrow \pi^\ast \uHom(\Omega_W^\dagger, J) \rightarrow 0 .
  \end{equation*}
  Note, however, that
  \begin{gather*}
    H^1(CS, i_\ast \uHom(\Omega_{\sP_W/W}(\log), p^\ast J)) = H^1(C, \uHom(\Omega_{\sP_W/W}(\log), p^\ast J)) = 0 \\
    H^1(CS, \pi^\ast \uHom(\Omega_W^\dagger, J)) = H^1(S, \uHom(\Omega_W^\dagger, J)) = 0
  \end{gather*}
  since both $C$ and $S$ are affine, and both $\uHom(\Omega_{\sP_W/W}(\log), p^\ast J)$ and $\uHom(\Omega_W^\dagger, J)$ are quasi-coherent.  Therefore $H^1(CS, \uA(CS, J)) = 0$ and we are done.
\end{proof}

%
\subsection{Perfection} \label{sec:perfection}

\begin{prop} \label{prop:perfect}
  \begin{enumerate}
  \item $\fE$ is a perfect relative obstruction theory for $\oM_{\rel}(\sP/\BGm)$.
  \item $\fE'$ is a perfect relative obstruction theory for $\fM_{\rel}^\ast(\sP)$ over $\fM$.
  \end{enumerate}
\end{prop}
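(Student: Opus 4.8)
The plan is to show that the associated abelian cone stacks $S \mapsto \fE(S,\cO_S)$ and $S \mapsto \fE'(S,\cO_S)$ are vector bundle stacks in the sense of \cite[Definition~1.9]{BF}; that $\fE$ and $\fE'$ are relative obstruction theories at all has already been arranged in Sections~\ref{sec:obthy1}, \ref{sec:obthy2} and~\ref{sec:obs-ax}.  Since perfection is an \'etale-local assertion on $\oM_{\rel}(\sP/\BGm)$ (resp.\ on $\oM_{\rel}(\sP)$), I would work locally on $S$ throughout.  The strategy is to treat $\fE$ directly, using the explicit description of $\uT$ from Section~\ref{sec:calc} together with the cohomological formulae for the site $CS$ from Section~\ref{sec:site}, and then to deduce perfection of $\fE'$ formally from the exact sequence of Proposition~\ref{prop:ext}.

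For $\fE$: after shrinking $S$, I would arrange that the map $S \rightarrow \tsT$ factors through $f : S \rightarrow W$ with $W$ smooth over $\tsT$ --- such a factorization exists locally on $S$, obtained by pulling back a smooth atlas of $\tsT$ and choosing a section.  This is exactly the hypothesis of Lemmas~\ref{lem:A} and~\ref{lem:B}.  Those lemmas present $\uA(CS,\cO_S)$ as an iterated extension whose graded pieces are $i_\ast\bigl((g^\ast \Omega_{\tsP/\tsT}(\log))^\vee\bigr)$, a locally free sheaf pushed forward from $C$, and $\pi^\ast$ of locally free sheaves on $S$ (locally free because $W$ is smooth over $\tsT$); and since $\uA \rightarrow \uT$ has kernel $\pi^\ast \uT_{W/\tsT}(S,\cO_S)$, again locally free and pulled back from $S$, it follows that $\uT(CS,\cO_S)$ is represented on $CS$ by a sheaf $N$, in degree $0$, built by successive extensions from sheaves of the form $i_\ast(\text{locally free on }C)$ and $\pi^\ast(\text{locally free on }S)$.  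By construction $\fE(S,\cO_S)$ is the category of $\uT(CS,\cO_S)$-torsors on $CS$, hence equals $\ch\bigl(R\pi_\ast N\,[1]\bigr)$.  Now apply the facts recorded in Section~\ref{sec:site}: $R\pi_\ast \pi^\ast = \id$ and $R\pi_\ast i_\ast = Rp_\ast$, where $p : C \rightarrow S$ is the proper, flat family of curves of relative dimension one; since $Rp_\ast$ of an $S$-flat coherent sheaf on $C$ is a perfect complex of amplitude $[0,1]$, and extensions of perfect complexes of amplitude $[0,1]$ are again of amplitude $[0,1]$, the complex $R\pi_\ast N$ is perfect of amplitude $[0,1]$.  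Therefore $R\pi_\ast N\,[1]$ is perfect of amplitude $[-1,0]$, so $\fE$ is a vector bundle stack; note that the genuine obstruction space $h^{-1}$ is precisely $R^1 p_\ast$ of the summand of $N$ supported on $C$.

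For $\fE'$: one could try to rerun this computation with $\sT^2$ in place of $\tsT$, but now the relative cotangent complex $\Omega_{\tsP/\sT^2}(\log)$ is a genuine two-term complex (see the Remark in Section~\ref{sec:calc}), so $\uT'$ is a bona fide abelian group stack and the crude estimate for $R\pi_\ast$ of its representing complex leaves a possible contribution in cohomological degree $1$ --- coming from $R^1 p_\ast$ of the $\cO_{\tsP}$-summand of that cotangent complex, i.e.\ the dual Hodge bundle.  Rather than cancel this by hand, I would invoke Proposition~\ref{prop:ext}, which supplies the short exact sequence $0 \rightarrow \fE'' \rightarrow \fE \rightarrow \fE' \rightarrow 0$ of abelian cone stacks in which $\fE''$ is the locally free sheaf $\bE^\vee$, of amplitude $[0,0]$.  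Writing $K$ for the perfect complex of amplitude $[-1,0]$ representing $\fE$ from the previous step, the complex representing $\fE'$ is the cone of the corresponding map $\bE^\vee[0] \rightarrow K$; a cone of a map between perfect complexes of amplitudes $[0,0]$ and $[-1,0]$ is perfect of amplitude $[-1,0]$, so $\fE'$ is again a vector bundle stack, completing the proof.

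I expect the main obstacle to be the amplitude bookkeeping, and in particular the interaction between the derived structure of $\Omega_{\tsP/\sT^2}(\log)$ and the functor $R^1 p_\ast$ over the relative curve: this is exactly the point at which routing through Proposition~\ref{prop:ext} is essential, since the potentially problematic degree-one cohomology is precisely the subobject $\fE'' = \bE^\vee$ that gets quotiented out.  A second, more bookkeeping-level subtlety --- already handled by Lemmas~\ref{lem:A} and~\ref{lem:B} --- is that the ``dagger'' sheaves $\Omega_W^\dagger$ and $\Omega_{\tsP_W}^\dagger$ exist only chart by chart on $CS$; it is only the sheaves $\uHom(-,\cO_S)$ that they define, and hence $\uT$ and $\uT'$ themselves, that globalize into sheaves of $i_\ast$- and $\pi^\ast$-type to which the cohomological formulae of Section~\ref{sec:site} can be applied.
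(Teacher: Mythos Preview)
Your overall architecture matches the paper's: work \'etale-locally on $S$, factor $S \rightarrow \tsT$ through a smooth $W$, filter $\uA(CS,\cO_S)$ using Lemmas~\ref{lem:A} and~\ref{lem:B}, and deduce perfection of $\fE'$ from that of $\fE$ via the exact sequence of Proposition~\ref{prop:ext} together with $\fE'' \simeq \bE^\vee$.  The reduction of $\fE'$ to $\fE$ is exactly what the paper does, and your amplitude estimate for the cone of $\bE^\vee[0] \rightarrow K$ is correct.

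There is, however, a genuine gap in your treatment of $\fE$.  You assert that the graded piece $\uB(CS,\cO_S)$ produced by Lemma~\ref{lem:B} is globally of the form $\pi^\ast(\text{locally free on }S)$, and you label this as ``bookkeeping already handled by Lemmas~\ref{lem:A} and~\ref{lem:B}''.  It is not.  Lemma~\ref{lem:B} only identifies $\uB(CS,J)$ with $\Hom(\Omega_W^\dagger, J)$ \emph{chart by chart}, and $\Omega_W^\dagger$ is $\Omega_W$ on charts of first kind but $\Omega_W(\log w)$ on charts of second kind.  These do not patch to a sheaf on $S$, so $\uB$ is not globally $\pi^\ast$ of anything, and you cannot invoke $R\pi_\ast \pi^\ast = \id$ to compute $R\pi_\ast \uB$.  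Consequently your claim that $\uT(CS,\cO_S)$ is built from extensions of $i_\ast$- and $\pi^\ast$-type sheaves is not established, and your amplitude bound on $R\pi_\ast N$ does not follow.

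The paper closes this gap with an additional step you are missing: one compares the global sheaf $L = \uB(CS,\cO_S)$ to the honest pullback $\pi^\ast f^\ast \Omega_W^\vee$ via the natural map, and observes that the cone $K$ is supported on the locus of essential nodes $D \subset C$ (since $\Omega_W^\dagger = \Omega_W$ on charts of first kind).  Because $D \rightarrow S$ is finite, $R\pi_\ast K = \psi_\ast h^\ast K$ for the finite projection $\psi : D \rightarrow S$, and one checks directly that $h^\ast K$ is a two-term complex of locally free $\psi^\ast \cO_S$-modules in degrees $[-1,0]$.  This node-supported correction term is the substantive content you need to add; it is not absorbed by Lemmas~\ref{lem:A} and~\ref{lem:B}.
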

\begin{proof}
  In view of~\cite[Compl\'ement~I.4.11]{sga6} and the exact sequence~\eqref{eqn:16}, either of these assertions implies the other, since the isomorphisms $\fE''(S,J) \simeq \uHom(\bE, J)$ implies $\fE''$ is represented by a vector bundle.  We prove the first.

  To show that $\fE$ is a vector bundle stack, we must show that its restriction to any $S$-point of $\oM_{\rel}(\sP/\BGm)$ is a vector bundle stack.  This problem is local in $S$, so we are free to assume that the map $S \rightarrow \tsT$ factors through a smooth map $W \rightarrow \tsT$.  Let $\tsP_W = \tsP \fp_{\tsT} W$ be the expansion of $\sP$ over $W$ induced from the map from $W$ to $\tsT$.  

  Pushing forward the exact sequence~\eqref{eqn:23} from Lemma~\ref{lem:A} and using the fact that $R \pi_\ast \pi^\ast = \id$, we get an exact sequence
  \begin{equation*}
    0 \rightarrow f^\ast T_{W/\tsT} \rightarrow \fE \rightarrow \fF \rightarrow 0
  \end{equation*}
  where $\fF$ denotes the category of torsors under $\uA(CS,J)$ on $CS$.  Since $f^\ast T_{W/\tsT}$ is a vector bundle, $W$ being smooth and representable over $\tsT$, the proposition reduces by \cite[Compl\'ement~I.4.11]{sga6} to showing that $\fF$ is perfect.

  Now use the exact sequence~\eqref{eqn:21} from Lemma~\ref{lem:B} to get an exact sequence
  \begin{equation*}
    0 \rightarrow \fG' \rightarrow \fF \rightarrow \fG \rightarrow 0
  \end{equation*}
  where $\fG'$ is the stack of $i_\ast \uHom(g^\ast \Omega_{\tsP/\tsT}(\log), p^\ast J)$-torsors on $CS$ and $\fG$ is the stack of $\uB(CS,J)$-torsors on $CS$.  Note that the map $\fF \rightarrow \fG$ is surjective because
  \begin{equation*}
    H^2(CS, i_\ast \uHom(g^\ast \Omega_{\tsP/\tsT}(\log), p^\ast J)) = H^2(C, g^\ast \Omega_{\tsP/\tsT}(\log), p^\ast J) = 0
  \end{equation*}
  because $C$ is a curve.  Since $g^\ast \Omega_{\tsP/\tsT}(\log)$ is a vector bundle, $\fG'(S, \cO_S)$ is representable by $R p_\ast g^\ast \Omega_{\tsP/\tsT}(\log)^\vee[1]$.  In particular, this is a perfect complex of perfect amplitude in $[-1,0]$.  To show $\fF$ is perfect, it therefore suffices (again by \cite[Compl\'ement~I.4.11]{sga6}) to see that $\fG$ is perfect.

  By Lemma~\ref{lem:B}, $\uB(CS,J)$ is representable by $\uHom(\Omega_W^\dagger, J)$.  Let $L$ be the sheaf $\uB(CS, \cO_S)$ on $CS$.  There is an injective map $\pi^\ast f^\ast \Omega_W \rightarrow \Omega_W^\dagger$ which is an isomorphism on charts of first kind.  Let $K$ be the cone of the dual map $L \rightarrow \pi^\ast f^\ast \Omega_W^\vee$.  This gives an exact triangle
  \begin{equation*}
    R \pi_\ast L \rightarrow f^\ast \Omega_W^\vee \rightarrow R \pi_\ast K \rightarrow R \pi_\ast L[1] .
  \end{equation*}
  To show that $L[1]$ is perfect in degrees $[-1,0]$ it suffices (by \cite[Compl\'ement~I.4.11]{sga6}) to see that $R \pi_\ast K$ is perfect in $[-1,0]$.

  Now, $K$ is zero on charts of first kind, so if $h : D \rightarrow CS$ is the closed embedding induced from the inclusion $D \subset C$ of the essential nodes of $C$, then $K = h_\ast h^\ast K$.  The functor $h_\ast$ is exact since $h$ is a closed embedding, so $R \pi_\ast K = R \psi_\ast h^\ast K$, where $\psi : D \rightarrow S$ is the projection.  But $D$ is finite over $S$ so $\psi_\ast$ is exact and $R \psi_\ast h^\ast K = \psi_\ast h^\ast K$.  

  Note now that $h^\ast K$ is a $2$-term complex of locally free $\psi^\ast \cO_S$-modules, concentrated in degrees $[-1,0]$, so $\psi_\ast h^\ast K$ is as well.  Indeed, we can assume after \'etale localization in $S$ that $D$ can be split into a disjoint union of components on which the map to $S$ is a closed embedding.  The question reduces to the consideration of a single component, so we can assume that $D$ is closed in $S$.  Then $\psi_\ast h^\ast K$ is representable near $D$ by the complex
  \begin{equation*}
    [ \Hom(\Omega_W^\dagger, \cO_S) \rightarrow \Hom(g^\ast \Omega_W, \cO_S) ].
  \end{equation*}



\end{proof}

\bibliographystyle{amsalpha}             
\bibliography{rubber}       
\end{document}